\documentclass[a4paper,twoside,10pt]{article}

\usepackage[english]{babel}
\usepackage{lmodern,color}
\usepackage[T1]{fontenc}
\usepackage{indentfirst}
\usepackage{amsmath,amssymb}

\usepackage[a4paper,top=3cm,bottom=3cm,left=3cm,right=3cm,%
bindingoffset=5mm]{geometry}
\usepackage{lmodern}
\usepackage[T1]{fontenc}
\usepackage{lettrine}
\usepackage{booktabs}

\usepackage{authblk}
\usepackage{emp}
\usepackage{amsthm}
\usepackage{amsmath,amssymb,amsthm}
\usepackage{braket}
\usepackage{chngpage}
\usepackage{cases}
\usepackage{graphicx}
\usepackage{epstopdf}
\usepackage{tabularx}
\usepackage{multirow}
\usepackage{amsmath}       
\usepackage{amssymb}       
\usepackage{chapterbib}    
\usepackage{color}
\usepackage{comment}
\usepackage{bm,bbm}
\usepackage{overpic}
\usepackage{times}
\usepackage{epsfig}
\usepackage{graphicx,graphics,rotating}
\usepackage{subfigure}
\usepackage{multicol}
\usepackage{multirow}

\theoremstyle{plain}
\newtheorem{theorem}{Theorem}[section]

\theoremstyle{definition}

\newtheorem{remark}[theorem]{Remark}

\theoremstyle{plain}

\definecolor{MyDarkGreen}{rgb}{0,0.45,0}




\newcommand{\K}{\mathbb{K}}



\newcommand{\INTP}{\footnotesize{\texttt{I}}}
\newcommand{\SPAN}[1]{\mbox{\textrm{span}}\big\{#1\big\}}
\newcommand{\REAL}{\mathbbm{R}}

\newcommand{\half}{\frac{1}{2}}


\newcommand{\qv}{\mathbf{q}}

\newcommand{\vv}{\mathbf{v}}
\newcommand{\wv}{\mathbf{w}}
\newcommand{\xv}{\mathbf{x}}
\newcommand{\yv}{\mathbf{y}}


\newcommand{\as}{a}
\newcommand{\bs}{b}

\newcommand{\fs}{f}

\newcommand{\ms}{m}

\newcommand{\qs}{q}

\newcommand{\us}{u}
\newcommand{\vs}{v}
\newcommand{\ws}{w}
\newcommand{\xs}{x}
\newcommand{\ys}{y}

\newcommand{\Vs}{V}





\newcommand{\PS}[1]{\mathbbm{P}_{#1}}

\newcommand{\HONE}  {H^1}
\newcommand{\HONEzr}{H^1_0}

\newcommand{\LTWO}  {L^2}

\newcommand{\HS}[1] {H^{#1}}

\newcommand{\CS}[1] {C^{#1}}

\newcommand{\VS}[1] {V^{#1}}



\renewcommand{\P} {\textsf{P}}            
\newcommand  {\F} {f}
\newcommand  {\E} {e}
\renewcommand{\S} {\sigma} 





\newcommand{\hh}{h}
\newcommand{\Th}{\Omega_{\hh}}

\newcommand{\xvP}{\xv_{\P}}        
\newcommand{\xvF}{\xv_{\F}}        
\newcommand{\xvE}{\xv_{\E}}        
\newcommand{\xvS}{\xv_{\S}}        

\newcommand{\DIM} {d}              

\newcommand{\hP}{\hh_{\P}}
\newcommand{\hF}{\hh_{\F}}
\newcommand{\hE}{\hh_{\E}}

\newcommand{\hS}{\hh_{\S}}

\newcommand{\mP}{\ABS{\P}}
\newcommand{\mF}{\ABS{\F}}
\newcommand{\mE}{\ABS{\E}}

\newcommand{\mS}{\ABS{\S}}







\newcommand{\dV}{\,dV}
\newcommand{\dS}{\,d\S}
\newcommand{\dx}{\,d\xv}






\newcommand{\norPF}{\mathbf{n}_{\P,\F}}



\newcommand{\fsh}{\fs_{\hh}}



\newcommand{\uss} {\us^{s}}
\newcommand{\ussh}{\ush^{s}}
\newcommand{\ussht}{\widetilde{\us}_{\hh}^{s}}
\newcommand{\ush}{\us_{\hh}}
\newcommand{\usht}{\widetilde{\us}_{\hh}}

\newcommand{\vsh}{\vs_{\hh}}

\newcommand{\vsI}{\vs^{\INTP}}

\newcommand{\wsh}{\ws_{\hh}}
\newcommand{\wsht}{\widetilde{\ws}_{\hh}}







\newcommand{\vvh}{\vv_{\hh}}

\newcommand{\wvh}{\wv_{\hh}}











\newcommand{\asP}{\as^{\P}}
\newcommand{\bsP}{\bs^{\P}}

\newcommand{\ash}{\as_{\hh}}
\newcommand{\bsh}{\bs_{\hh}}

\newcommand{\ashP}{\as^P_{\hh}}

\newcommand{\bsht} {\widetilde{\bs}_{\hh}}
\newcommand{\bshtP}{\widetilde{\bs}^{\P}_{\hh}}

\newcommand{\SP} {S^{\P}}


\newcommand{\nlen}{\hspace{-0.2mm}}

\newcommand{\snorm}  [2]{|#1|_{#2}}

\newcommand{\norm}   [2]{|\nlen|#1|\nlen|_{#2}}

\newcommand{\NORM}   [2]{\left|\nlen\left|#1\right|\nlen\right|_{#2}}

\newcommand{\abs}    [1]{|#1|}

\newcommand{\ABS}    [1]{\left|#1\right|}


\newcommand{\Vhks}{\VS{\hh}_{k}}
\newcommand{\Vhk}{\VS{\hh}_{k}}

\newcommand{\Vhkt}{{\widetilde{V}}^{\hh}_{k}}
\newcommand{\calM} [1]{\mathcal{M}_{#1}}
\newcommand{\calMs}[1]{\mathcal{M}^{*}_{#1}}

\newcommand{\Pin}[1]{\Pi^{\nabla, \P}_{#1}}
\newcommand{\Piz}[1]{\Pi^{0, \P}_{#1}}

\newcommand{\PiFz}[1]{\Pi^{0,\F}_{#1}}
\newcommand{\PiFn}[1]{\Pi^{\nabla,\F}_{#1}}




\newcommand{\cbot}{c_*}
\newcommand{\ctop}{c^*}

\newcommand{\restrict}[2]{{#1}_{|{#2}}}

\newcommand{\EOD}{

\begin{document}


  \title{The virtual element method for eigenvalue problems with
    potential terms on polytopal meshes}

  \author[a]{O. Certik}
  \author[b]   {F. Gardini}
  \author[c] {G. Manzini}
  \author[d]  {G. Vacca}

  \affil[a]{
    Group CCS-2,
    Computer, Computational and Statistical Division,
    Los Alamos National Laboratory,
    Los Alamos, New Mexico - 87545, USA;
    \emph{e-mail: certik@lanl.gov}
  }
  \affil[b]{
    Dipartimento di Matematica \emph{F. Casorati},
    Universit\`a di Pavia,
    Via Ferrata, 5 - 27100 Pavia, Italy;
    \emph{e-mail: francesca.gardini@unipv.it}
  }
  \affil[c]{
    Group T-5,
    Theoretical Division,
    Los Alamos National Laboratory,
    Los Alamos, New Mexico - 87545, USA;
    \emph{e-mail: gmanzini@lanl.gov}
  }
  \affil[d]{
    Dipartimento di Matematica e Applicazioni, 
    Universit\`a di Milano Bicocca, 
    Via R. Cozzi, 55 - 20125 Milano, Italy;
    \emph{e-mail: giuseppe.vacca@unimib.it}
  }


\maketitle

\begin{abstract}
    We extend the conforming virtual element method to the numerical
    resolution of eigenvalue problems with potential terms on a
    polytopal mesh.
    An important application is that of the Schr\"odinger equation
    with a pseudopotential term.
    This model is a fundamental element in the numerical resolution of
    more complex problems from the Density Functional Theory.
    The VEM is based on the construction of the discrete bilinear
    forms of the variational formulation through certain polynomial
    projection operators that are directly computable from the degrees
    of freedom.
    The method shows a great flexibility with respect to the meshes
    and provide a correct spectral approximation with optimal convergence rates.
    This point is discussed from both the theoretical and the
    numerical viewpoint.
    The performance of the method is numerically investigated by
    solving the Quantum Harmonic Oscillator problem with the harmonic
    potential and a singular eigenvalue problem with zero potential
    for the first eigenvalues.
  \end{abstract}

\raggedbottom

\section{Introduction}
\label{sec:introduction}

The numerical treatment of the Schr\"odinger equation with local
pseudopotentials is one of the most expensive step in solving
electronic-structures in large-scale Density Functional calculations,
e.g., see~\cite{Bader:1991,Yang-Ayers:2003,Gross:2013}.
These calculations make it possible to determine properties of
materials from quantum-mechanical first principles (ab initio), hence
without the need of adaptable parameters.
A widely used approach for solving the Schr\"odinger equation in
large-scale quantum-mechanical physical systems is provided by the
\textit{the plane wave (PW) pseudopotential
  method}~\cite{pickett1989pseudopotential} and its many variants.
PW methods are spectral methods based on an expansion on Fourier basis
functions (the plane waves).
Such methods are generally accurate, but their computer implementation
may be inefficient as it normally relies on the \textit{fast Fourier
  transform (FFT)}, whose nonlocal communication pattern compromises
the method's scalability on parallel architectures.
Moreover, the strictly uniform resolution of a plane waves expansion
makes resolution adaptation infeasible, thus requiring to consider a
big number of PWs to capture the highly oscillatory behavior in the
atomic region.
Such a high resolution is unnecessary in transition regions between
atoms where the solution to the Schr\"odinger equation is normally much
smoother. 
This fact may eventually lead to computationally inefficient
discretizations~\cite{PASK20178}.

An alternative to the PW approach has been offered in recent years by
real-space methods such as finite differences, finite volumes, and
finite elements.
Such methods are based on the direct approximation of the solution of
the Schr\"odinger equation on a computational grid.
In particular, the finite element method (FEM) is a variational method
based on the expansion of the solution in shape basis functions,
usually piecewise polynomials that are strictly locally defined in
each mesh element.
As noted in~\cite{PASK20011,Pask-Sterne:2005}, the strictly local
nature of the shape functions has several important consequences.
First, the FEM produces sparse matrices that can efficiently be
treated by standard iterative methods (preconditioned Krylov
schemes)~\cite{Cai:2013:HPI};
the computational mesh can be refined near the atom locations where
the eigenfunctions are expected to vary the most in order to increase
the efficiency of the representation;
highly scalable implementations are possible on parallel machines.
Moreover, the accuracy of the method can easily be improved by
increasing the polynomial degree of the shape functions and
systematically enhanced by adding other nonpolynomial functions that
incorporates in the local approximation some physical insight from
the eigenfunction behavior~\cite{Sukumar:2009:CEF,Pask:2011:PUF}.

A very recent and important development in the field of the FEM
consists in the virtual element method (VEM), which was introduced in
\cite{BeiraodaVeiga-Brezzi-Cangiani-Manzini-Marini-Russo:2013} as a
variational reformulation of the Mimetic Finite Difference (MFD)
method~\cite{BdV-Lipnikov-Manzini:book,Lipnikov-Manzini-Shashkov:2014,BeiraodaVeiga-Lipnikov-Manzini:2011}.
The VEM is a very successful approach for the construction of
numerical approximation of any order of accuracy and regularity on
general polygonal/polyhedral meshes.
Despite its relative recentness (the first paper was published in
2013), the VEM has been developed
successfully for a large range of mathematical and engineering problems
\cite{
Chernov-BdV-Mascotto-Russo:2016,%
Stokes:divfree,%
Mascotto:2017b,%
BeiraodaVeiga-Manzini:2015,
Cangiani-Manzini-Russo-Sukumar:2015,%
Beirao2018,%
Antonietti-BdV-Scacchi-Verani:2016,%
Benedetto-Berrone:2016,%
Wriggers:2016,%
Chi-BdV-Paulino:2017,%
Vacca:2017,%
BdV-Brezzi-Dassi-Marini-Russo:2017,%
ADLP-HR,%
Vacca:2018},
extended to curved edges~\cite{BdV-FRusso-Vacca:2017},
and three dimensional problems
\cite{Ahmad-Alsaedi-Brezzi-Marini-Russo:2013,Dassi:2017,DASSI2018},
using also mixed spaces~\cite{Caceres2017}
and 
nonconforming spaces
\cite{Ayuso-Lipnikov-Manzini:2016,Mascotto-Perugia-Pichler:2018,Antonietti-Manzini-Verani:2018:M3AS:journal,Cangiani:2016:SINUM:journal,Cangiani:2017:IMAJNA:journal}. 
High-order and higher-order continuity schemes have been presented
in~\cite{Chernov-BdV-Mascotto-Russo:2016} and
~\cite{Brezzi-Marini:2013,BdV-Manzini:2013,Antonietti-BdV-Scacchi-Verani:2016},
respectively.

The VEM is indeed a finite element method, so all good properties of
the FEM when applied to the Schrodinger equation still hold.
In addition, we can exploit the great flexibility of the method, which
comes from the fact that the shape functions used in the variational
formulation are not known in a closed form, but are defined as the
solution of suitable differential problems.
This fact is also the motivation of the name ``virtual''.

The construction of the method and its practical implementations
relies on the special choice of the degrees of freedom rather than the
explicit knowledge of the local shape functions.
The degrees of freedom allow the calculation of certain projection
operators from local virtual element spaces into polynomial subspaces.
Using such operators, we can properly construct the discrete bilinear
forms that approximate the continuous ones of the variational
formulation in the virtual element framework.

The present work is the first instance of a long term project that
aims to extend the virtual element approach to the real-space
numerical approximation of the equations of the Density Functional
Theory.
We start here by considering the Schr\"odinger equation with local
pseudopotentials and Dirichlet/Neumann boundary conditions.
Despite its simplicity, this model allows us to compute the spectrum
of the classical Quantum Harmonic Oscillator.
We emphasize that the zero potential case, which corresponds to the
standard eigenvalue problem for the Laplace operator, also provides
the classical ``atom in a box'' model.
Previous works investigating the VEM for eigenvalue problems regard
the approximation of the Steklov eigenvalue problem
\cite{Mora-Rivera:2015, Mora-Rivera:2017}, the Laplace eigenvalue
problem~\cite{Gardini-Vacca:2017, Gardini-Manzini-Vacca:2018} 
with conforming and nonconforming virtual elements, respectively, 
the acoustic vibration
problem~\cite{Beirao-Mora-Rivera-Rodriguez:2017}, the vibration
problem of Kirchhoff plates~\cite{Mora-Rivera-Velasquez:2017},
the transmission eigenvalue problem  
\cite{Mora-Velasquez:2018}
whereas~\cite{Cangiani-Gardini-Manzini:2011} deals with the Mimetic
Finite Difference approximation of the eigenvalue problem in mixed
form.

The outline of the paper is as follows.
In Section \ref{sec:continuous problem} we recall the eigenvalue
problem under investigation, introducing the classical variational
formulation and the necessary notations.
Section \ref{sec:spaces} details the proposed discretization procedure. 
The approximation spaces and all the bilinear forms that define the
discrete problem, are introduced and described.
Section \ref{sec:analysis} deals with the theoretical analysis, which
leads to the optimal error estimates of Theorems~\ref{th:convl2},
~\ref{theorem:double:convergence:rate}, and~\ref{thm:h1estimate}.
In Section \ref{sec:tests} we present several numerical tests, which
highlight the actual performance of our approach. 
Finally, in Section~\ref{sec:conclusion} we offer our final remarks
and conclusions.


\section{The continuous eigenvalue problem}
\label{sec:continuous problem}
\subsection{Technicalities and definitions}
We use the standard definition and notation of Sobolev spaces, norms
and seminorms as given in~\cite{Adams:1975}.
Hence, the Sobolev space $\HS{s}(\omega)$ consists of functions
defined on the open bounded connected subset $\omega$ of
$\REAL^{\DIM}$, $\DIM=1,2,3$, that are square integrable and whose
weak derivatives up to order $s$ are also square integrable.
As usual, if $s=0$, we prefer the notation $\LTWO(\omega)$.
Norm and seminorm in $\HS{s}(\omega)$ are denoted by
$\norm{\cdot}{s,\omega}$ and $\snorm{\cdot}{s,\omega}$, respectively,
and $(\cdot,\cdot)_{\omega}$ denote the $\LTWO$-inner product.
The subscript $\omega$ may be omitted when $\omega$ is the whole computational
domain $\Omega$.

If $\ell\geq0$ is an integer number, $\PS{\ell}(\omega)$ is the space
of polynomials of degree up to $\ell$ defined on $\omega$, with the
convention that $\PS{-1}(\omega)=\{0\}$.
The $\LTWO$-orthogonal projection onto the polynomial space
$\PS{\ell}(\omega)$ is denoted by
$\Pi^{0,\omega}_{\ell}\,:\,\LTWO(\omega)\to\PS{\ell}(\omega)$.
Space $\PS{\ell}(\omega)$ is the span of the finite set of
\emph{scaled monomials of degree up to $\ell$}, that are given by
\begin{align}
  \calM{\ell}(\omega) =
  \left\{\,
    \left( {(\xv-\xv_{\omega})}\slash{\hh_{\omega}} \right)^{\alpha}
    \textrm{~with~}\abs{\alpha}\leq\ell
    \,\right\},
\end{align}
where 
\begin{itemize}
\item $\xv_{\omega}$ denotes the center of gravity of $\omega$ and
  $\hh_{\omega}$ its characteristic length, as, for instance, the edge
  length, the face diameter, or the cell diameter for $\DIM=1,2,3$;
\item $\alpha=(\alpha_1,\ldots,\alpha_{\DIM})$ is the
  $\DIM$-dimensional multi-index of nonnegative integers $\alpha_i$
  with degree $\abs{\alpha}=\alpha_1+\ldots+\alpha_{\DIM}\leq\ell$ and
  such that
  $\xv^{\alpha}=\xs_1^{\alpha_1}\cdots\xs_{\DIM}^{\alpha_{\DIM}}$ for
  any $\xv\in\REAL^{\DIM}$.
\end{itemize}
We will also use the set of \emph{scaled monomials of degree exactly
  equal to $\ell$}, denoted by $\calM{\ell}^{*}(\omega)$ and obtained
by setting $\abs{\alpha}=\ell$ in the definition above.

Finally, we use the letter $C$ in the estimates throughout the paper
to denote a strictly positive constant that is independent of the mesh size $\hh$,
but may depend on the problem constants, like the coercivity and
continuity constants, or other discretization constants like the mesh
regularity constant, the stability constants, etc. 
Constant $C$ generally has a different value at each occurrence.

\subsection{The continuous model}
Let $\Omega \subseteq\REAL^{\DIM}$ for $\DIM=2,3$ 
be the computational domain  and let $\Gamma$ be the boundary of $\Omega$.
We are interested in the numerical approximation of the eigenvalues
$\eigv\in\REAL$ and the eigenfunctions $\us:\Omega\to\REAL$, $\us\neq
0$, solving the following problem in strong form:
\begin{align}
  -\half\Delta\us(\xv) + V(\xv)\us(\xv)  = \eigv\us(\xv) \qquad\xv\in\Omega,
  \label{eq:Schrodinger}
\end{align}
where $V(\xv)$ is a smooth real-valued scalar potential function.
In the context of atomic and molecular quantum theory, $\eigv$ and
$\us$ are the energy level and the corresponding wavefunction of the
Hamilton operator $\mathcal{H}:=-\half\Delta+V(\xv)$, and potential
$V(\xv)$ collects all local and nonlocal terms from the Density
Functional Theory \cite{Bader:1991,Yang-Ayers:2003,Gross:2013}.
For a proper mathematical formulation, problem~\eqref{eq:Schrodinger}
is supplemented with suitable boundary conditions that, depending on
the problem, can be of Dirichlet, Neumann, or periodic type (if $\Omega$ is a parallelepiped). 
In the following we consider for sake of simplicity homogeneous  Dirichlet boundary conditions. 
The other cases easily follow the same construction.
Furthermore we assume that $V(\xv)$ is uniformly bounded from below and above, i.e.,
there exist two strictly positive constant $V_*$ and $V^*$ such that
$V_*\leq V(\xv)\leq V^*$ for almost every $\xv\in\Omega$.

\medskip
\noindent
The variational formulation of~\eqref{eq:Schrodinger} reads as:
\emph{Find $\eigv\in\REAL$ and $\us\in\HONEzr(\Omega)$,
  $\NORM{\us}{\LTWO(\Omega)}=1$, such that}
\begin{equation}
  \as(\us,\vs)=\eigv\bs(\us,\vs)\qquad\forall\vs\in\HONEzr(\Omega),
  \label{eq:eigPbm}
\end{equation}
where the bilinear form
$\as:\HONE(\Omega)\times\HONE(\Omega)\to\REAL$ is given by
\begin{align}
  \as(\us,\vs)
  = \int_{\Omega}\big( \half\nabla\vs(\xv)\cdot\nabla\us(\xv) + V(\xv)\us(\xv)\vs(\xv) \big)\,\dx
  \qquad\forall\us,\,\vs\in\HONE(\Omega),
  \label{eq:exact:a}
\end{align}
and the bilinear form $\bs:\LTWO(\Omega)\times\LTWO(\Omega)\to\REAL$
is the $\LTWO$-inner product on $\Omega$, i.e.,
\begin{align}
  \bs(\us,\vs)=\int_{\Omega}\us(\xv)\vs(\xv)\dx
  \qquad\forall\us,\,\vs\in\LTWO(\Omega).
  \label{eq:exact:b}
\end{align}

\begin{remark}
  From the standard eigenvalue theory,
  see~\cite{Boffi:2010,Babuska-Osborn:1991}, we know that 
  \begin{description}
  \item $(i)$ problem~\eqref{eq:eigPbm} admits a discrete infinite set
    of eigenvalues forming a positive increasing divergent
    sequence;
  \item $(ii)$ the corresponding eigenfunctions are an orthonormal
    basis of $\HONEzr(\Omega)$ with respect to the $\LTWO$-inner product
    and the scalar product associated with the bilinear form
    $\as(\cdot,\cdot)$;
  \item $(iii)$ the eigenvalues may have multiplicity bigger than one,
    but in such a case the corresponding eigenspace must have finite
    dimension.
  \end{description}
\end{remark}

\medskip
We also consider the source problem with 
homogeneous Dirichlet
boundary conditions: 
\emph{Find $\uss\in\HONEzr(\Omega)$ such that}
\begin{align}
  \as(\uss,\vs)=\bs(\fs,\vs)\qquad\forall\vs\in\HONEzr(\Omega),
  \label{eq:schrodinger:source-pblm:var-form}
\end{align}
where we assume that $\fs\in\LTWO(\Omega)$.
Well-posedness of problem~\eqref{eq:schrodinger:source-pblm:var-form},
i.e., existence and uniqueness of its solution, is proved by using
the Lax-Milgram Lemma~\cite{Ern-Guermond:2013} since, due to the
boundedness assumption on the potential field $\Vs$, the bilinear form
$\as$ in~\eqref{eq:exact:a} is coercive and the bilinear form $\bs$ in~\eqref{eq:exact:b} is continue. 
Moreover, due to regularity result~\cite{Agmon:1965, Grisvard:1992}, 
there exists $r>0$, depending only on $\Omega$, such that $\uss\in H^{1+r}(\Omega)$. 
Eventually, the following stability estimate holds
\begin{equation}
\label{eq:regularity}
\snorm{\uss}{1+r}\le C \norm{f}{0}.
\end{equation}

\section{The virtual element method}
\label{sec:spaces}

We are interested in developing the virtual element approximation of
the eigenvalue problem in variational form~\eqref{eq:eigPbm}.
To this end, we first discuss which meshes can be used for the
numerical formulation and introduce a proper set of regularity
assumptions.
Then, we define the local and global virtual element spaces, the
degrees of freedom and the bilinear forms $\ash$ and $\bsh$ that
approximate $\as$ and $\bs$.
Finally, we review the estimate of the convergence rate for the related
VEM approximation of the source problem.

\subsection{Mesh definition and regularity assumptions}
\label{subsec:mesh:regularity:assumptions}
Let $\Th$ denote a decomposition of the computational domain $\Omega$
into a finite set of polytopal elements $\P$.
As usual, the subindex $\hh$ that labels the mesh $\Th$ is the maximum of
the diameters $\hP=\sup_{\xv,\yv\in\P}\abs{\xv-\yv}$ of the elements
of the mesh.
We assume that the elements are nonoverlapping and for each element
$\P$ we denote its $(\DIM-1)$-dimensional nonintersecting boundary by
$\partial\P$; its center of gravity by $\xvP$; its $\DIM$-dimensional
measure by $\mP$.
The boundary of $\P$ is formed by straight edges when $\DIM=2$ and
flat faces when $\DIM=3$.
On 3D polyhedral meshes, we denote the midpoint and length of each
mesh edge $\E$ by $\xvE$ and $\hE$, respectively, and the center of
gravity, diameter and area of each face $\F$ are denoted by $\xvF$,
$\hF$, and $\mF$, respectively.
In the 2D case, we do not make any special distinction between the
terms ``\emph{edge}'' and ``\emph{face}'', which we consider as
synonyms.
To unify the notation we may use the symbol $\S$ instead of $\E$ or
$\F$ and, for example, refer to the geometric objects forming the
elemental boundary $\partial\P$ by the term \emph{side} instead of
\emph{edge}/\emph{face}.
According to such notation, we denote the center of gravity, diameter,
and measure of side $\S$ by $\xvS$, $\hS$, and $\mS$, respectively.


Consider the set $\mathcal{T}=\{\Th\}_{\hh}$ formed by the
decompositions of $\Omega$ for $\hh\to0$.
The convergence analysis of the conforming VEM we want to consider in
this work requires some regularity assumptions that must be satisfied
by all the members of mesh family $\mathcal{T}=\{\Th\}_{\hh}$.
For completeness we state these assumptions for both $\DIM=2$ and $\DIM=3$ case, although those for $\DIM=2$ can be derived from those for $\DIM=3$ by reducing the
spatial dimension.

\medskip
\noindent
\textbf{(A0)}~\textbf{Mesh regularity assumptions}.

\begin{itemize}
  \medskip
\item $\mathbf{\DIM=3}$.
  There exists a positive constant $\varrho$ independent of $\hh$
  (and, hence, of $\Th$) such that for every polyhedral element
  $\P\in\Th$ it holds that
  \begin{description}
  \item[$(i)$] $\P$ is star-shaped with respect to a ball with radius
    $\ge\varrho\hP$;
  \item[$(ii)$] every face $\F\in\P$ is star-shaped with respect to a
    disk with radius $\ge\varrho\hF$;
  \item[$(iii)$] for every edge $\E\in\partial\F$ of every face
    $\F\in\partial\P$ it holds that
    $\hE\geq\varrho\hF\geq\varrho^2\hP$;
  \end{description}

  \medskip
\item $\mathbf{\DIM=2}$.
  There exists a positive constant $\varrho$ independent of $\hh$
  (and, hence, of $\Th$) such that for every polygonal element
  $\P\in\Th$ it holds that
  \begin{description}
  \item[$(i)$] $\P$ is star-shaped with respect to a disk with radius
    $\ge\varrho\hP$;
  \item[$(ii)$] for every edge $\E\in\partial\P$ it holds that
    $\hE\geq\varrho\hP$;
  \end{description}
\end{itemize}

The scaling assumption implies that the number of edges and faces in
each elemental boundary is uniformly bounded over the whole mesh
family $\{\Th\}$.
The star-shapedness property implies that elements and faces are
\emph{simply connected} subsets of $\REAL^{\DIM}$ and
$\REAL^{\DIM-1}$, respectively.

\medskip
\noindent

\subsection{The conforming virtual element space}
\label{subsec:conforming:virtual:element:space}
We construct the local conforming virtual element space by resorting
to the so-called \emph{enhancement strategy} introduced
in~\cite{Ahmad-Alsaedi-Brezzi-Marini-Russo:2013}.
The construction of the conforming virtual element space in the
multidimensional case for $\DIM\geq 3$ is recursive.
We discuss here only the more general case for $\DIM=3$, while the
case for $\DIM=2$ follows from a simple dimensional reduction.

To this end, on every polygonal face $\F$ of the boundary $\partial\P$
and for any integer number $k\geq 1$ we first define the finite
element space
\begin{equation}\label{eq:def:Vhk0f}
  \Vhkt(\F) = \Big\{\,
  \vsh\in\HONE(\F)\cap\CS{0}(\overline{\F})\,:\,
  \restrict{\vsh}{\partial\F}\in\CS{0}(\partial\F),\,
  \restrict{\vsh}{\E}\in\PS{k}(\E)\,\forall\E\subset\partial\F,\,
  \Delta\vsh\in\PS{k}(\F)\,
  \Big\}.
\end{equation}
It is worth noting that the space of polynomials of degree up to
$k$ defined on $\F$ is a subspace of $\Vhkt(\F)$.
Then, we introduce the set of continuous linear functionals from
$\Vhkt(\F)$ to $\REAL$ that for every virtual function $\vsh$ of
$\Vhkt(\F)$ provide:
\begin{description}
\item[]\textbf{(D1)} the values of $\vsh$ at the vertices of $\F$;

\item[]\textbf{(D2)} the moments of $\vsh$ of order up to $k-2$ on
  each one-dimensional edge $\E\in\partial\F$:
  \begin{equation}\label{eq:dofs:01f}
    \frac{1}{\mE}\int_{\E}\vsh\,\ms\dS,
    \,\,\forall\ms\in\calM{k-1}(\E),\,
    \forall\E\in\partial\F;
  \end{equation}

\item[]\textbf{(D3)} the moments of $\vsh$ of order up to $k-2$ on
  each two-dimensional face $\F$:
  \begin{equation}\label{eq:dofs:02f}
    \frac{1}{\mF}\int_{\F}\vsh\,\ms\dS,
    \,\,\forall\ms\in\calM{k-2}(\F).
  \end{equation}
\end{description}
Finally, we introduce the elliptic projection operator
$\PiFn{k}:\Vhkt(\F)\to\PS{k}(\F)$ that for any $\vsh\in\Vhkt(\F)$ is
defined by:
\begin{align}
\label{eq:PiFn}
  \int_{\F}\nabla\PiFn{k}\vsh\cdot\nabla\qs\dx =
  \int_{\F}\nabla\vsh\cdot\nabla\qs\dx\quad\forall\qs\in\PS{k}(\F)
\end{align}
together with the additional conditions:
\begin{align}
  \int_{\partial\P}(\PiFn{k}\vsh-\vsh)\dS&=0 
  \qquad\textrm{if~}k=1,    \label{eq:def:Pib:1}\\[0.5em]
  \int_{\P}       (\PiFn{k}\vsh-\vsh)\dx&=0 
  \qquad\textrm{if~}k\geq 2.\label{eq:def:Pib:k}
\end{align}
As proved in~\cite{BeiraodaVeiga-Brezzi-Cangiani-Manzini-Marini-Russo:2013, Cangiani-Manzini-Sutton:2017}, the polynomial
projection $\PiFn{k}\vsh$ is computable from the values of the linear
functionals \textbf{(D1)}-\textbf{(D3)}.
Furthermore, $\PiFn{k}$ is a polynomial-preserving operator, i.e.,
$\PiFn{k}\qs=\qs$ for every $\qs\in\PS{k}(\F)$.

\medskip
The \emph{local conforming virtual element space of order $k$} on the
polygonal face $\F$ is the subspace of $\Vhkt(\F)$ defined as
\begin{equation}\label{eq:def:Vhk0F}
  \Vhk(\F) = \Big\{\,
  \vsh\in\Vhkt(\F)\textrm{~such~that~}
  (\vsh-\PiFn{k}\vsh,\ms)_{\F}=0
  \,\,\forall\ms\in\calMs{k-1}(\F)\cup\calMs{k}(\F)
  \,\Big\}.
\end{equation}
Space $\Vhk(\F)$ has the two important properties that we outline
below:
\begin{description}
\item[$(i)$] it still contains the space of polynomials of degree at
  most $k$;
\item[$(ii)$] the values provided by the set of continuous linear
  functionals \textbf{(D1)}-\textbf{(D3)} uniquely determine every
  function $\vsh$ of $\Vhk(\F)$ and can be taken as the
  \emph{degrees of freedom} of $\vsh$.
\end{description}

\medskip
Property $(i)$ above is a direct consequence of the space definition,
while property $(ii)$ follows from the unisolvency of the degrees of
freedom \textbf{(D1)}-\textbf{(D3)} that was proved
in~\cite{Ahmad-Alsaedi-Brezzi-Marini-Russo:2013}.
\begin{remark}
\label{rm:l2-proj-face}
Additionally, from the space definition, 
we have that the $\LTWO$-orthogonal projection $\PiFz{k}\vsh$ is
computable exactly using only the degrees of freedom of $\vsh$,
and
$\PiFz{k}\vsh=\PiFn{k}\vsh$ for $k=1,2$.
\end{remark}

To define the conforming virtual element space on the polyhedral cell
$\P$, we first need to introduce the ''\emph{extended}'' virtual
element space:
\begin{equation}\label{eq:def:Vhk0p}
  \Vhkt(\P) = \Big\{\,
  \vsh\in\HONE(\P)\cap\CS{0}(\overline{\P})\,:\,
  \restrict{\vsh}{\partial\P}\in\CS{0}(\partial\P),\,
  \restrict{\vsh}{\F}\in\Vhk(\F)\,\forall\F\subset\partial\P,\,
  \Delta\vsh\in\PS{k}(\P)\,
  \Big\}.
\end{equation}
The space $\Vhkt(\P)$ clearly contains the polynomials of degree $k$.
Now we introduce the set of continuous linear functionals from
$\Vhkt(\P)$ to $\REAL$ that 
are the obvious three-dimensional counterpart
of the linear operators  of the bi-dimensional case.
For every virtual function $\vsh$ of
$\Vhkt(\P)$ we provide \cite{Ahmad-Alsaedi-Brezzi-Marini-Russo:2013, Dassi:2017}:
\begin{description}
\item[]\textbf{(D1)} the values of $\vsh$ at the vertices of $\P$;

\item[]\textbf{(D2)} the moments of $\vsh$ of order up to $k-2$ on
  each one-dimensional edge $\E\in\partial\P$:
  \begin{equation}\label{eq:dofs:01p}
    \frac{1}{\mE}\int_{\E}\vsh\,\ms\dS,
    \,\,\forall\ms\in\calM{k-1}(\E),\,
    \forall\E\in\partial\P;
  \end{equation}

\item[]\textbf{(D3)} the moments of $\vsh$ of order up to $k-2$ on
  each two-dimensional face $\F\in\partial\P$:
  \begin{equation}\label{eq:dofs:02p}
    \frac{1}{\mF}\int_{\F}\vsh\,\ms\dS,
    \,\,\forall\ms\in\calM{k-1}(\F),\,
    \forall\F\in\partial\P;
  \end{equation}

\item[]\textbf{(D4)} the moments of $\vsh$ of order up to $k-2$ on
  $\P$:
  \begin{equation}\label{eq:dofs:03p}
    \frac{1}{\mP}\int_{\P}\vsh\,\ms\dx,
    \quad\forall\ms\in\calM{k-2}(\P).
  \end{equation}
\end{description}
Then we introduce the $H^1$-seminorm projection operator
$\Pin{k}:\Vhkt(\P)\to\PS{k}(\P)$ that for any $\vsh\in\Vhkt(\P)$ is defined by:
\begin{align}
\label{eq:Pin}
  \int_{\P}\nabla\Pin{k}\vsh\cdot\nabla\qs\dx =
  \int_{\P}\nabla\vsh\cdot\nabla\qs\dx\quad\forall\qs\in\PS{k}(\P)
\end{align}
coupled with the conditions:
\begin{align}
  \int_{\partial\P}(\Pin{k}\vsh-\vsh)\dS&=0 
  \qquad\textrm{if~}k=1,    \label{eq:def:Pib:1p}\\[0.5em]
  \int_{\P}       (\Pin{k}\vsh-\vsh)\dx&=0 
  \qquad\textrm{if~}k\geq 2.\label{eq:def:Pib:kp}
\end{align}
The polynomial projection $\Pin{k}\vsh$ can be computed in terms of  the values of the linear functionals \textbf{(D1)}-\textbf{(D4)}.
Finally, $\Pin{k}$ is  polynomial-preserving, i.e.,
$\Pin{k}\qs=\qs$ for every $\qs\in\PS{k}(\P)$.

\medskip
We are now ready to introduce the \emph{local conforming virtual
  element space of order $k$} on the polytopal element $\P$, which is
the subspace of $\Vhkt(\P)$ defined as follow:
\begin{equation}\label{eq:def:Vhk0}
  \Vhk(\P) = \Big\{\,
  \vs\in\Vhkt(\P)\textrm{~such~that~}
  (\vsh-\Pin{k}\vsh,\ms)_{\P}=0
  \,\,\forall\ms\in\calMs{k-1}(\P)\cup\calMs{k}(\P)
  \,\Big\}.
\end{equation}
We recall that, by construction, the local space $\Vhk(\P)$
enjoys the following fundamental properties
(see \cite{Ahmad-Alsaedi-Brezzi-Marini-Russo:2013, Dassi:2017}):
\begin{description}
\item[$(i)$] it still contains the space of polynomials of degree at
  most $k$;
\item[$(ii)$] the values provided by the set of continuous linear
  functionals \textbf{(D1)}-\textbf{(D4)} uniquely determine every
  function $\vsh$ of $\Vhk(\P)$ and can be taken as the
  \emph{degrees of freedom} of $\vsh$.
\item[$(iii)$] we can define an interpolation operator in
  $\Vhk(\P)$ with optimal approximation properties so that for
  every $\vs\in\HS{r+1}(\P)$ with $1\leq r\leq k$ the interpolant
  $\vsI$ satisfies the inequality:
  \begin{align}
    \norm{\vs-\vsI}{\LTWO(\P)} + \hP\snorm{\vs-\vsI}{\HONE(\P)}
    \leq C\hP^{r+1}\snorm{\vs}{\HS{r+1}(\P)},
    \label{eq:CFVEM:interpolant}
  \end{align}
  for some positive constant $C$ independent of $\hh$.
\end{description}

\medskip
As for the 2D case, the $\LTWO$-orthogonal projection $\Piz{k}\vsh$ is
computable in terms of the degrees of freedom of $\vsh$,
and $\Piz{k}\vsh=\Pin{k}\vsh$ for $k=1,2$.

\medskip
Finally, the \emph{global conforming virtual element space $\Vhk$
  of order $k\geq1$ subordinate to the mesh $\Th$} is obtained by
gluing together the elemental spaces $\Vhk(\P)$ to form a subspace
of the conforming space $\HONE(\Omega)$.
The formal definition reads as:
\begin{align}
  \Vhk:=\Big\{\,\vsh\in\HONEzr(\Omega)\,:\,\restrict{\vsh}{\P}\in\Vhk(\P)
  \,\,\,\forall\P\in\Th\,\Big\}.
  \label{eq:def:nvem0}
\end{align}
A set of degrees of freedom for $\Vhk$ is given by collecting the
values from the linear functionals \textbf{(D1)}-\textbf{(D4)} for all the mesh elements.
The unisolvence of such degrees of freedom
 is an immediate
consequence of their unisolvence on each local space $\Vhk(\P)$.

\subsection{The VEM for the eigenvalue problem}
\label{subsec:VEM:source:problem}
\medskip
The next step in the construction of our method is to define a discrete version of the bilinear
forms $\as$ and $\bs$ given in \eqref{eq:exact:a} and \eqref{eq:exact:b}.
First of all we split the bilinear form $\as$ into the sum of local terms:
\begin{align}
  &\as(\us,\vs) = \sum_{\P\in\Th}\asP(\us,\vs)
  \textrm{~~where~~}\asP(\us,\vs) = \int_{\P}\big(\half\nabla\us\cdot\nabla\vs+V\us\vs\big)\dx.
  \label{eq:extension}
\end{align}
and we note that
for an arbitrary pair $(\us, \vs) \in \Vhks \times \Vhks$ the quantity $
\as(\us, \vs)$ is not computable.
Then, following a standard procedure in the VEM framework, we consider a computable discrete local
bilinear form $\ash(\cdot,\cdot)$  given
by the sum of elemental contributions
\begin{align}
  \ash(\ush,\vsh) = \sum_{\P\in\Th}\ashP(\ush,\vsh),
\end{align}
where we define
\begin{align}
  \label{eq:discreteforms}
  \ashP(\ush,\vsh) 
  = \half\int_{\P}\Piz{k-1}\nabla\ush\cdot\Piz{k-1}\nabla\vsh\,\dx
  + \int_{\P}V\Piz{k}\ush\,\Piz{k}\vsh\,\dx
  + \SP\Big( \big(I-\Pin{k}\big)\ush, \big(I-\Pin{k}\big)\vsh \Big),
\end{align}
$\SP(\cdot,\cdot)$ being the stabilization term that will be discussed
in the following.
The bilinear form $\ashP$ depends on the orthogonal projections
$\Piz{k-1}\nabla\ush$ and $\Piz{k-1}\nabla\vsh$, which are computable
from the degrees of freedom of $\ush$ and $\vsh$,
respectively~\cite{Ahmad-Alsaedi-Brezzi-Marini-Russo:2013}.
In fact, starting from the definition of the orthogonal projection,
an integration by parts yields: 
\begin{align}
  \int_{\P}\Piz{k-1}\nabla\ush\cdot\qv\dV
  &= \int_{\P}\nabla\ush\cdot\qv\dV\qquad\forall\qv\in\left[\PS{k-1}(\P)\right]^{\DIM}\\[1em]
  &= -\int_{\P}\ush\nabla\cdot\qv\dV + \sum_{\F\in\partial\P}\int_{\F}\ush\norPF\cdot\qv\dS
\end{align}
where $\norPF$ denotes the unit outward normal to $f$.
The first integral on the last right-hand side is computable from
the degrees of freedom \textbf{(D4)} as it is the moment of $\ush$
against a polynomial of degree $k-2$ over $\P$.
The face integrals above are also computable since 
$$
\int_{\F}\ush\norPF\cdot\qv\dS = \int_{\F}\PiFz{k-1}\ush\norPF\cdot\qv\dS, 
$$
and the $\LTWO$-orthogonal projection $\PiFz{k-1}\ush$, as we have seen, is
computable exactly using only the degrees of freedom of $\ush$, c.f. Remark~\ref{rm:l2-proj-face}.

\medskip
The discrete form $\ashP(\cdot,\cdot)$ must satisfy the two fundamental
properties:
\begin{description}
\item[-] {\emph{$k$-consistency}}: for all $\vsh\in\Vhks$ and for all
  $\qs\in\PS{k}(\P)$ it holds
  \begin{align}
    \label{eq:k-consistency}
    \ashP(\vsh,\qs) = \asP(\vsh,\qs);
  \end{align}
\item[-] {\emph{stability}}: there exists two positive constants
  $\alpha_*,\,\alpha^*$, independent of $\hh$ and of $\P$, such that
  \begin{align}
    \label{eq:stability}
    \alpha_*\asP(\vsh,\vsh)
    \leq\ashP(\vsh,\vsh)
    \leq\alpha^*\asP(\vsh,\vsh)\quad\forall\vsh\in\Vhks.
  \end{align}
\end{description} 
Stability is ensured by adding the bilinear form $\SP$, which can be
any symmetric positive definite bilinear form on the element $\P$ for
which there exist two positive constants $\cbot$ and $\ctop$ such that
\begin{align}
  \cbot\asP(\vsh,\vsh)
  \leq\SP(\vsh,\vsh)
  \leq\ctop\asP(\vsh,\vsh)
  \quad\forall\vsh\in\Vhks(\P)\textrm{~with~}\Pin{k}\vsh=0.
  \label{eq:SP:stability}
\end{align}
Note that $\SP(\cdot,\cdot)$ must scale like $\asP(\cdot,\cdot)$,
namely $\SP(\cdot,\cdot)\simeq\hP^{d-2}$ (see also Section \ref{sec:tests}).


Following~\cite{Gardini-Vacca:2017, Gardini-Manzini-Vacca:2018}, we
consider two different discretizations of the eigenvalue
problem~\eqref{eq:eigPbm} that are obtained by considering two possible choices for the discretization of the bilinear form $\bs$ (cf. \eqref{eq:exact:b}).
We split $\bs$ into the local contributions
\begin{align}
  &\bs(\us,\vs) = \sum_{\P\in\Th}\bsP(\us,\vs)
  \textrm{~~where~~}\bsP(\us,\vs) = \int_{\P} \us(\xv)\,\vs(\xv)\dx.
  \label{eq:extensionb}
\end{align}
In the first choice we consider an approximated bilinear form $\bsh$, which satisfies the $k$--consistency
property but not the stability property (extending to $\bsh$ the definitions above). Therefore we simply take
\begin{equation}
\label{eq:discreteb}
b_h^{\P}(u_h,v_h)=\int_{\P}\Pi^{0, \P}_{k}u_h \, \Pi^{0, \P}_{k}v_h \, \text{d}\mathbf{x}.
\end{equation}
The second possible choice consists in considering a discrete bilinear form $\tilde{b}_h(\cdot,\cdot)$ which, as done for the discrete form $a_h(\cdot,\cdot)$, 
enjoys both the $k$--consistency property and the stability property. 
In particular we define 
\begin{equation}
\label{eq:discretebstab}
\tilde{b}_h^{\P}(u_h,v_h)=\int_{\P}\Pi^{0, \P}_{k}u_h \, \Pi^{0, \P}_{k}v_h \, \text{d}\mathbf{x} + \tilde{S}^{\P}\Big((I-\Pi_{k}^{0, \P})u_h,(I-\Pi_{k}^{0, \P})v_h\Big),
\end{equation}
where $\tilde{S}^{\P}$ is any positive definite bilinear form on the element $\P$ such that there exist two uniform positive 
constants $\beta_*$ and $\beta^*$ such that
$$
\beta_* \, b^{\P}(v_h,v_h)\le\tilde{S}^{\P}(v_h,v_h)\le\beta^* \, b^{\P}(v_h,v_h)
\quad\forall v_h\in V_h^k(\P) \text{ with }\Pi_k^{0, \P}v_h=0.
$$

\begin{remark}
\label{eq:scale2}
In analogy with the condition on the form $S^{\P}(\cdot,\cdot)$, 
we require that the form $\tilde{S}^{\P}(\cdot,\cdot)$ scales like $b^{\P}(\cdot,\cdot)$, that is $\tilde{S}^{\P}(\cdot,\cdot)\simeq h^d$. 
\end{remark}

The resulting virtual element scheme read as: 
\emph{Find $(\varepsilon_h,\ush)\in\REAL\times\Vhks$, $\NORM{\ush}{0}=1$, such that}
\begin{align}
  \label{eq:discreteEigPbm}
  \ash(\ush,\vsh) = \varepsilon_{\hh}\bsh(\ush,\vsh)\quad\forall\vsh\in\Vhks.
\end{align}
if we adopt the first choice $\bsh$ for the approximation of $\bs$.
\medskip
\noindent
The second virtual element formulation reads as: 
\emph{Find $(\widetilde{\varepsilon}_{\hh},\usht)\in\REAL\times\Vhks$,
  $\NORM{\usht}{0} = 1$, such that}
\begin{align}\label{eq:discreteEigPbm2}
  \ash(\usht,\vsh) = \widetilde{\varepsilon}_{\hh}\bsht(\usht,\vsh)\quad\forall\vsh\in\Vhks,
\end{align}
where we consider the stabilized bilinear form $\bsht$.

Finally, in what follows, we will also need the discrete source problem corresponding to both discrete
formulations \eqref{eq:discreteEigPbm} and \eqref{eq:discreteEigPbm2}, which reads respectively as:
\emph{Find $\ussh\in \Vhks$ such that}
\begin{align}
  \ash(\ussh,\vsh)=\bsh(\fs,\vsh)\qquad\forall\vsh\in \Vhks.
  \label{eq:schrodinger:source-pblm:var-form1}
\end{align}
and 
\emph{find $\ussht\in \Vhks$ such that}
\begin{align}
  \ash(\ussht,\vsh)=\bsht(\fs,\vsh)\qquad\forall\vsh\in \Vhks,
  \label{eq:schrodinger:source-pblm:var-form2}
\end{align}

The well-posedness of the discrete formulations 
 \eqref{eq:schrodinger:source-pblm:var-form1} and
 \eqref{eq:schrodinger:source-pblm:var-form2}
 stem from the coercivity of the bilinear form $\ash$ and form the continuity of the forms $\bsh$ and $\bsht$.
 
We finally observe that
both bilinear forms  are fully computable for any couple of functions $(\ush, \vsh) \in \Vhks$, since 
the enhancement technique implies that $\Piz{k}\ush$ (resp. $\Piz{k}\vsh$) can be
computed using only the degrees of freedom of $\ush$ (resp. $\vsh$).


\medskip 
The following convergence estimate theorem holds for the approximation 
of the source problem~\cite{Ahmad-Alsaedi-Brezzi-Marini-Russo:2013}.
\begin{theorem}\label{theorem:GDM:apriori:estimate}
  Let $\uss\in\HS{r+1}(\Omega)$  
  be the solution to the
  variational problem~\eqref{eq:schrodinger:source-pblm:var-form} with
  $\fs\in\LTWO(\Omega)$.
  Let $\ussh\in\Vhks$ be the solution of the virtual element method
  \eqref{eq:schrodinger:source-pblm:var-form1}, 
  $\ussht\in\Vhks$ be the solution of the virtual element method
  \eqref{eq:schrodinger:source-pblm:var-form2} and denote by $\fs_{\hh}$ the 
  piecewise $\LTWO$-projection of $\fs$ onto the space $\mathbb{P}_{k}(\P)$.
   Under the mesh regularity assumption~\textbf{(A0)}, 
  let $t=min(k,r)$, and $\vsh \in \{\ussh, \, \ussht\}$ then it holds
  \begin{itemize}
  \item $\HONE$-error estimate:
    \begin{align}
      \label{eq:source:problem:H1:error:bound}
      \snorm{ \uss-\vsh }{\HONE(\Omega)}\leq C \left( 
        \hh^{t}\snorm{\uss}{\HS{r+1}(\Omega)} 
        + \hh\norm{\fs-\fsh}{\LTWO(\Omega)}
      \right).
    \end{align}

    \medskip
  \item $\LTWO$-error estimate (for a convex $\Omega$):
    \begin{align}
      \label{eq:source:problem:L2:error:bound}
      \norm{ \uss-\vsh}{\LTWO(\Omega)} \leq 
      C \left( \hh^{t+1}
        \snorm{\uss}{\HS{r+1}(\Omega)}
        + \hh\norm{\fs-\fsh}{\LTWO(\Omega)}
      \right).
    \end{align}
  \end{itemize}
\end{theorem}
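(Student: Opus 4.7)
The plan is to adapt the standard Strang-type error analysis for conforming virtual elements (as in~\cite{Ahmad-Alsaedi-Brezzi-Marini-Russo:2013}), carefully accounting for the presence of the potential term in $\as$ and for the two possible choices of the discrete $\LTWO$-like form. I would first set up three approximants of $\uss$: the virtual element interpolant $\usIP\in\Vhk(\P)$ satisfying the interpolation bound \eqref{eq:CFVEM:interpolant}, the elementwise $\LTWO$-polynomial approximation $\usp\in\PS{k}(\P)$ (for which Bramble-Hilbert yields $\snorm{\uss-\usp}{\HS{m}(\P)}\leq C\hP^{t+1-m}\snorm{\uss}{\HS{t+1}(\P)}$ for $m=0,1$), and the piecewise polynomial projection $\fsh$ of $\fs$ that appears in the statement.

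For the $\HONE$-estimate, write $\uss-\vsh=(\uss-\usI)+(\usI-\vsh)$, bound the first summand directly by \eqref{eq:CFVEM:interpolant}, and set $\delta_{\hh}:=\vsh-\usI\in\Vhk$. Using coercivity of $\ash$ (inherited from \eqref{eq:stability} and the lower bound $V_*$ on the potential), I would write
\begin{align*}
\alpha_*\snorm{\delta_{\hh}}{\HONE(\Omega)}^2
&\leq \ash(\delta_{\hh},\delta_{\hh})
= \ash(\vsh,\delta_{\hh}) - \sum_{\P}\bigl[\ashP(\usI-\usp,\delta_{\hh})+\ashP(\usp,\delta_{\hh})\bigr].
\end{align*}
On $\ashP(\usp,\delta_{\hh})$ I invoke the $k$-consistency \eqref{eq:k-consistency} to replace $\ashP$ by $\asP$, then use $\asP(\usp,\delta_{\hh})=\asP(\usp-\uss,\delta_{\hh})+\asP(\uss,\delta_{\hh})$ and, globally, Galerkin identity $\as(\uss,\delta_{\hh})=\bs(\fs,\delta_{\hh})$. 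For the discrete equation I use $\ash(\vsh,\delta_{\hh})=\bsh(\fs,\delta_{\hh})$ (or the analogous identity with $\bsht$). Thus every term in $\ash(\delta_{\hh},\delta_{\hh})$ reduces to one of: (i) $\asP(\uss-\usp,\delta_{\hh})$, bounded by polynomial approximation; (ii) $\ashP(\usI-\usp,\delta_{\hh})$, bounded by continuity of $\ashP$ together with triangle inequality on $\uss-\usI$ and $\uss-\usp$; (iii) the consistency error $\bsh(\fs,\delta_{\hh})-\bs(\fs,\delta_{\hh})$, which I split inserting $\fsh$ and using the $\LTWO$-orthogonality of $\Piz{k}$ against polynomials, so that it reduces to $\hh\norm{\fs-\fsh}{\LTWO(\Omega)}\snorm{\delta_{\hh}}{\HONE(\Omega)}$ via a Poincaré inequality on $\delta_{\hh}$. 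Dividing by $\snorm{\delta_{\hh}}{\HONE(\Omega)}$ and combining yields \eqref{eq:source:problem:H1:error:bound}.

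For the $\LTWO$-estimate I would use an Aubin--Nitsche duality argument, exploiting the convexity of $\Omega$ and the regularity estimate \eqref{eq:regularity} applied to the adjoint problem. Let $\ws\in\HONEzr(\Omega)\cap\HTWO(\Omega)$ solve $\as(\vs,\ws)=\bs(\uss-\vsh,\vs)$ for all $\vs\in\HONEzr(\Omega)$, with $\snorm{\ws}{\HTWO(\Omega)}\leq C\norm{\uss-\vsh}{\LTWO(\Omega)}$. Then
\begin{align*}
\norm{\uss-\vsh}{\LTWO(\Omega)}^2 = \as(\uss-\vsh,\ws-\wsI) + \bigl[\as(\uss-\vsh,\wsI)\bigr],
\end{align*}
where on the bracketed term I insert $\as=\ash+(\as-\ash)$ and mimic the Galerkin argument of the primal problem, producing consistency contributions that, combined with the interpolation bound on $\ws-\wsI$ (yielding one extra factor of $\hh$) and with the $\HONE$-estimate already proven for $\uss-\vsh$, give \eqref{eq:source:problem:L2:error:bound}.

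The main obstacle will be the careful bookkeeping of the various consistency errors generated by the splitting $\ash=\asP\textup{-projected}+S^P$ and by the non-coincidence of $\bsh$ (or $\bsht$) with $\bs$; the treatment must also absorb the stabilization term $\widetilde{S}^{\P}$ for the case $\vsh=\usht$ by exploiting $\beta^*$-continuity of $\widetilde{S}^{\P}$ together with $\LTWO$-polynomial approximation of $\uss-\usp$, so that the two discrete formulations end up satisfying the \emph{same} bound. A secondary technicality is handling the potential term $V(\xv)\Piz{k}\ush\,\Piz{k}\vsh$ in the consistency identity: one needs $\LINF$-boundedness of $V$ to transfer an $\LTWO$-error on $\uss-\usp$ into a continuity estimate that scales consistently with the $H^1$-part of $\asP$.
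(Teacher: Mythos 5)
The paper itself does not prove this theorem---it is imported verbatim from the cited VEM literature (\cite{Ahmad-Alsaedi-Brezzi-Marini-Russo:2013})---and your sketch reproduces exactly that standard argument (interpolant/polynomial-projection splitting with coercivity, $k$-consistency and Galerkin orthogonality for the $\HONE$ bound, Aubin--Nitsche duality with the convexity/regularity estimate for the $\LTWO$ bound, plus the $\int_P(f-\Pi^0_k f)(\delta_h-\Pi^0_k\delta_h)$ treatment of the right-hand side), so it is essentially the same approach and correct in outline. The only point to watch is that with a variable potential $V$ the form \eqref{eq:discreteforms} satisfies \eqref{eq:k-consistency} only up to a projection error in the $V$-term (exact consistency holds for the gradient part and the stabilization, not for $\int_P V\,\Piz{k}u_h\,\Piz{k}q$ unless $V$ is piecewise polynomial of low degree), so your consistency step must carry one extra term of the form $\int_P V(\Piz{k}u_h - u_h)\,q$, which is of optimal order for bounded smooth $V$ and does not change the final estimates.
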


\begin{remark}
\label{rm:estimate-regular-load}
Note that if $\uss$ is an eigenfunction of the continuous 
eigenvalue problem~\eqref{eq:eigPbm}, 
then it solves the continuous source problem~\eqref{eq:schrodinger:source-pblm:var-form} 
with datum $\varepsilon \uss$ and thus it belongs to $\HS{1+r}(\Omega)$ with 
 $\snorm{\uss}{1+r} \le C\norm{\uss}{0}$. Then, the a priori
 error estimates in Theorem ~\ref{theorem:GDM:apriori:estimate} reduce to\medskip
    \begin{itemize}
    \item $\HONE$-error estimate:
    \begin{align*}
    \snorm{ \uss-\vsh }{1} &\leq C \left( 
        \hh^{t}\snorm{\uss}{1+r} 
        + \hh \sum_{P\in\Th} \norm{(I-\Piz{k})\uss}{0}
      \right) 
      \le  \hh^{t}\snorm{\uss}{1+r}  \leq C \hh^{t}\norm{\uss}{0} \leq C \hh^{t},
    \end{align*}
    \item $\LTWO$-error estimate (for a convex $\Omega$):
    \begin{align*}
      \norm{ \uss - \vsh }{0} &\leq 
      C \left( \hh^{t+1}
        \snorm{\uss}{1+r}
        + \hh \sum_{P\in\Th} \norm{(I-\Piz{k})\uss}{0} \right) 
        \leq C \hh^{t+1} \snorm{\uss}{1+r}
        \leq C \hh^{t+1}\norm{\uss}{0} \leq C \hh^{t+1},
    \end{align*}
    \end{itemize}
    since
    \begin{equation*}
      \norm{(I-\Piz{k})\uss}{0}\leq C \hh^{\min\{k+1,1+r\}}\snorm{\uss}{1+r} \leq C \hh^{t+1}\norm{\uss}{0}.
    \end{equation*}
\end{remark}


\section{Convergence analysis and error estimates}
\label{sec:analysis}

\subsection{Spectral approximation for compact operators}
\label{sc:compact}
In this section, we briefly recall the results of the spectral approximation 
theory for compact operators. 
For more general results, we refer to the original papers~\cite{Babuska-Osborn:1991,Boffi:2010,Kato:1976}.  

We introduce a 
natural compact operator associated with problem~\eqref{eq:eigPbm} and
its discrete counterpart, and we recall their connection with the
eigenmode convergence.

\medskip
Let $T\in\mathcal{L}(\LTWO(\Omega))$ be the solution operator 
associated with problem~\eqref{eq:eigPbm}. $T$ is the bounded linear operator 
$T:\LTWO(\Omega)\rightarrow\LTWO(\Omega)$
which maps the forcing term $\fs$ to $\us=:T\fs$:
\begin{align*}
  \label{eq:T}
  \left\{
    \begin{array}{l}
      T\fs\in\HONE_0\textrm{~such~that}\\
      \as(T\fs,\vs) = \bs(\fs,\vs)\quad\forall\vs\in\HONE_0.
    \end{array}
  \right.
\end{align*}
Operator $T$ is self-adjoint and positive definite with respect to the
bilinear forms $\as(\cdot, \cdot)$ and $\bs(\cdot, \cdot)$ on $\HONE(\Omega)$, and compact due to the
compact embedding of $\HONE(\Omega)$ in $\LTWO(\Omega)$.

\medskip
Similarly, let $T_{\hh}\in\mathcal{L}(\LTWO(\Omega))$ 
and $\widetilde{T}_{\hh}\in\mathcal{L}(\LTWO(\Omega))$ 
be the discrete solution operators 
associated with the \emph{stabilized} and \emph{non-stabilized} discrete source problems. 
The former is the bounded linear operator mapping the forcing term
$\fs$ to $\ush=:T_{\hh}\fs$ and satisfies:
\begin{equation*}
  \left\{
    \begin{array}{l}
      \label{eq:Th}
      T_{\hh}\fs\in\Vhks\textrm{~such~that}\\
      \ash(T_{\hh}\fs,\vsh) = \bsh(\fs,\vsh)\quad\forall\vsh\in\Vhks.
    \end{array}
  \right.
\end{equation*}
The latter is the bounded linear operator mapping the forcing term
$\fs$ to $\usht=:\widetilde{T}_{\hh}\fs$ and satisfies:
\begin{align*}
  \left\{
    \begin{array}{l}
      \label{eq:Thtilde}
      \widetilde{T}_{\hh}\fs\in\Vhks\textrm{~such~that}\\
      \ash(\widetilde{T}_{\hh}\fs,\vsh) = \bsht(\fs,\vsh)\quad\forall\vsh\in\Vhks.
    \end{array}
  \right.
\end{align*}

Both operators $T_{\hh}$ and $\widetilde{T}_{\hh}$ are self-adjoint
and positive definite with respect to the discrete bilinear form 
$\ash(\cdot, \cdot)$, $\bsh(\cdot, \cdot)$ and $\ash(\cdot, \cdot)$, $\bsht(\cdot, \cdot)$.
They are also compact since their ranges are finite dimensional.

\medskip
\noindent
The eigensolutions of the continuous problem~\eqref{eq:eigPbm} and the
discrete problems ~\eqref{eq:discreteEigPbm}
and~\eqref{eq:discreteEigPbm2} are respectively related to the
eigenmodes of the operators $T$, $T_{\hh}$, and $\widetilde{T}_{\hh}$.
In particular, $(\varepsilon,\us)$ is an eigenpair of
problem~\eqref{eq:eigPbm} if and only if $T\us=(1\slash{\varepsilon})\us$,
i.e. $(\frac{1}{\varepsilon}, \us)$ is an eigenpair for the operator $T$,
and analogously for problems ~\eqref{eq:discreteEigPbm}
and~\eqref{eq:discreteEigPbm2} and operators $T_{\hh}$ and
$\widetilde{T}_{\hh}$.
Thanks to this correspondence, the convergence analysis can be
derived from the spectral approximation theory for compact operators.
In the rest of this section we refer only to operators $T$ and
$\widetilde{T}_{\hh}$.
Identical considerations hold for operators $T$ and $T_{\hh}$
and we omit them for brevity.

\medskip
A sufficient condition for the correct spectral approximation of a
compact operator $T$ is the uniform convergence to $T$ of the family
of discrete operators $\{\widetilde{T}_{\hh}\}_{\hh}$ (see~\cite[Proposition
7.4]{Boffi:2010}, cf. also~\cite{Babuska-Osborn:1991}):
\begin{equation}
  \label{eq:unifconv}
  \norm{T-\widetilde{T}_{\hh}}{\mathcal{L}(\LTWO(\Omega))}\to 0, \quad\textrm{as~}\hh\to 0,
\end{equation}
or, equivalently, 
\begin{equation}
  \label{eq:unifconv2}
  \norm{(T-\widetilde{T}_{\hh})\fs}{0}\leq C\rho(\hh)\norm{\fs}{0}
  \quad\forall\fs\in\LTWO(\Omega),
\end{equation}
with $\rho(\hh)$ tending to zero as $\hh$ goes to zero. 
Condition~\eqref{eq:unifconv2} usually follows by a-priori estimates
with no additional regularity assumption on $\fs$. 
Besides the convergence of the eigenmodes,
condition~\eqref{eq:unifconv}, or the equivalent
condition~\eqref{eq:unifconv2}, implies that no spurious eigenvalues
may pollute the spectrum.
In fact, each discrete eigenvalue approximates a continuous eigenvalue and 
each continuous eigenvalue is approximated by a number of 
discrete eigenvalues (counted with their multiplicity) that 
corresponds exactly to its multiplicity.

We now report the main results about the spectral approximation for compact operators.
(cf.~\cite[Theorems 7.1--7.4]{Babuska-Osborn:1991}; see
also~\cite[Theorem 9.3--9.7]{Boffi:2010}),
which deal with the order of
convergence of eigenvalues and eigenfunctions.
\begin{theorem}
  \label{thm:BOeigfun}
  Let the uniform convergence~\eqref{eq:unifconv} holds true. 
  Let $\mu$ be an eigenvalue of $T$, with multiplicity $m$, and denote 
  the corresponding eigenspace by $E_{\mu}$. 
  Then, exactly $m$ discrete eigenvalues $\widetilde{\mu}_{1, \hh}, \dots, \widetilde{\mu}_{m, \hh}$
  (repeated according to their respective multiplicities) converges to $\mu$.  
  Moreover, let $\widetilde{E}_{\mu,\hh}$ be the direct sum of the eigenspaces
  corresponding to the discrete eigenvalues
  $\widetilde{\mu}_{1,\hh},\cdots,\widetilde{\mu}_{m,\hh}$ converging to $\mu$.
  Then
  \begin{equation}
    \delta(E_{\mu}, \widetilde{E}_{\mu,\hh})\leq
    C \norm{(T-\widetilde{T}_{\hh})_{|E_{\mu}}}{\mathcal{L}(\LTWO(\Omega))},
    \label{eq:eigenmodeRate}
  \end{equation}
  with 
  \begin{align*}
    \delta(E_{\mu},\widetilde{E}_{\mu,\hh}) = \max( \hat\delta(E_{\mu},\widetilde{E}_{\mu,\hh}) , \hat\delta(\widetilde{E}_{\mu,\hh},E_{\mu}) ) 
  \end{align*}
   where, in general,
   \[
     \hat\delta(U, \, W) = \sup_{\us\in U, \NORM{\us}{0} = 1}\inf_{w\in W}\norm{\us-w}{0}
  \]
  denotes the gap between $U$, $W \subseteq L^2(\Omega)$.
\end{theorem}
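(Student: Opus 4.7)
The plan is to follow the classical Riesz projection approach from Kato's perturbation theory, adapted to compact self-adjoint operators as in Babu\v{s}ka--Osborn. Since $T$ is compact, its spectrum consists of $\{0\}$ together with a sequence of isolated eigenvalues of finite multiplicity. Around the reciprocal $\lambda = 1/\varepsilon$ associated with the eigenspace $E_{\mu}$, I can choose a small circle $\Gamma$ in the complex plane enclosing $\lambda$ and no other point of $\sigma(T)$. The starting objects are the spectral projections
\begin{equation*}
E = \frac{1}{2\pi i}\oint_{\Gamma}(z-T)^{-1}\,dz,
\qquad
\widetilde{E}_{\hh} = \frac{1}{2\pi i}\oint_{\Gamma}(z-\widetilde{T}_{\hh})^{-1}\,dz,
\end{equation*}
whose ranges are $E_{\mu}$ and (once $\Gamma\subset\rho(\widetilde{T}_{\hh})$) the invariant subspace $\widetilde{E}_{\mu,\hh}$ attached to those discrete eigenvalues lying inside $\Gamma$.

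The first key step is to use the uniform convergence \eqref{eq:unifconv} to show that, for $\hh$ sufficiently small, $\Gamma$ is contained in the resolvent set of $\widetilde{T}_{\hh}$ and the resolvents are uniformly bounded on $\Gamma$. This is obtained from the Neumann series expansion
\begin{equation*}
(z-\widetilde{T}_{\hh})^{-1} = (z-T)^{-1}\bigl[I-(\widetilde{T}_{\hh}-T)(z-T)^{-1}\bigr]^{-1},
\end{equation*}
which converges in operator norm whenever $\|\widetilde{T}_{\hh}-T\|\,\sup_{z\in\Gamma}\|(z-T)^{-1}\|<1$. Integrating the resolvent identity $(z-\widetilde{T}_{\hh})^{-1}-(z-T)^{-1}=(z-\widetilde{T}_{\hh})^{-1}(\widetilde{T}_{\hh}-T)(z-T)^{-1}$ around $\Gamma$ then yields $\|E-\widetilde{E}_{\hh}\|_{\mathcal{L}(L^2)}\to 0$.

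Next I use a standard fact about Banach-space projections: if two projections $E$ and $\widetilde{E}_{\hh}$ satisfy $\|E-\widetilde{E}_{\hh}\|<1$, then their ranges are isomorphic and in particular have the same finite dimension. This proves that exactly $m$ discrete eigenvalues (counted with multiplicity) lie inside $\Gamma$, giving the first part of the statement. For the gap estimate \eqref{eq:eigenmodeRate}, I exploit the restriction to $E_{\mu}$: for $\us\in E_{\mu}$ with $\|\us\|_{0}=1$ one has $\us=E\us$, hence
\begin{equation*}
\inf_{\widetilde{w}\in\widetilde{E}_{\mu,\hh}}\|\us-\widetilde{w}\|_{0}
\le \|\us-\widetilde{E}_{\hh}\us\|_{0}
= \|(E-\widetilde{E}_{\hh})\us\|_{0},
\end{equation*}
and after writing $(E-\widetilde{E}_{\hh})\us$ through the resolvent identity the perturbation operator $\widetilde{T}_{\hh}-T$ appears always applied to $(z-T)^{-1}\us\in E_{\mu}$, so only $\|(T-\widetilde{T}_{\hh})_{|E_{\mu}}\|$ enters. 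The reverse inequality $\hat\delta(\widetilde{E}_{\mu,\hh},E_{\mu})\le C\|(T-\widetilde{T}_{\hh})_{|E_{\mu}}\|$ follows either by a symmetric argument using the equal-dimension property and a compactness/contradiction step, or by the adjoint formulation using self-adjointness of $T,\widetilde{T}_{\hh}$ with respect to the appropriate inner products.

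The main technical obstacle is the refinement in step three: obtaining a bound in terms of $\|(T-\widetilde{T}_{\hh})_{|E_{\mu}}\|$ rather than the full operator norm $\|T-\widetilde{T}_{\hh}\|$. A naive estimate from $\|E-\widetilde{E}_{\hh}\|$ would give only the weaker inequality. The sharper form requires one to track carefully that, when unfolding the resolvent identity inside the contour integral acting on $\us\in E_{\mu}$, the argument of $T-\widetilde{T}_{\hh}$ stays inside $E_{\mu}$ (up to a bounded factor depending on $\Gamma$ and the separation of $\lambda$ from the rest of $\sigma(T)$), and this is exactly the content of the classical Osborn-type argument we borrow from \cite{Babuska-Osborn:1991,Boffi:2010}.
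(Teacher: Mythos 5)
The paper does not prove this theorem at all: it is recalled verbatim from the classical spectral approximation theory, with the proof delegated to \cite[Theorems 7.1--7.4]{Babuska-Osborn:1991} and \cite[Theorems 9.3--9.7]{Boffi:2010}. Your sketch reconstructs precisely the standard Riesz spectral-projection argument used in those references (resolvent bounds on a separating contour, convergence of the projections, equal dimension of their ranges, and the Osborn-type refinement yielding the restricted norm $\norm{(T-\widetilde{T}_{\hh})_{|E_{\mu}}}{\mathcal{L}(\LTWO(\Omega))}$ together with the equal-dimension lemma for the reverse gap), so it is consistent with the paper's treatment, with the understanding that the two steps you defer are exactly the content of the cited theorems.
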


Concerning the eigenvalue approximation error, we recall the following result.

\begin{theorem}
  \label{th:BOeig}
  Let the uniform convergence~\eqref{eq:unifconv} holds true. 
  Let $\phi_1,\ldots,\phi_m$ be a basis of the eigenspace $E_{\mu}$ of
  $T$ corresponding to the eigenvalue $\mu$.
  Then, for $i=1,\ldots,m$
  \begin{equation}
    \label{eq:eigRate}
    \abs{\mu - \widetilde{\mu}_{i,\hh}} \le C 
    \displaystyle
    \Big(\,\sum_{j,k=1}^m \abs{ b((T-\widetilde{T}_{\hh})\phi_k,\phi_j) } + 
    \norm{ (T-\widetilde{T}_{\hh})_{|E_{\mu}} }{\mathcal{L}(\LTWO(\Omega))}^2\,\Big),
  \end{equation}
  where $\widetilde{\mu}_{1,\hh},\ldots,\widetilde{\mu}_{m,\hh}$ are the $m$ discrete
  eigenvalues converging to $\mu$ repeated according to their
  multiplicities.
\end{theorem}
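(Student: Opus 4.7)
The plan is to follow the classical Babu\v{s}ka--Osborn strategy for compact self-adjoint operators, since both $T$ and $\widetilde{T}_{\hh}$ are self-adjoint and positive definite with respect to the pertinent inner products, and uniform convergence~\eqref{eq:unifconv} is assumed. The backbone of the argument is an identity expressing $\mu-\widetilde{\mu}_{i,\hh}$ as a bilinear-form residual of $(T-\widetilde{T}_{\hh})$ applied to elements of $E_{\mu}$, plus a quadratic remainder controlled by Theorem~\ref{thm:BOeigfun}.

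First I would fix $i\in\{1,\ldots,m\}$ and choose a $b$-normalized discrete eigenfunction $\widetilde{\phi}_{i,\hh}$ of $\widetilde{T}_{\hh}$ associated with $\widetilde{\mu}_{i,\hh}$. Using the spectral projector $\Pi_{\mu}:L^2(\Omega)\to E_{\mu}$, I decompose
\begin{equation*}
\widetilde{\phi}_{i,\hh} = w_i + r_i, \qquad w_i = \Pi_{\mu}\widetilde{\phi}_{i,\hh}\in E_{\mu}, \qquad r_i\perp E_{\mu}.
\end{equation*}
Theorem~\ref{thm:BOeigfun} together with the uniform convergence~\eqref{eq:unifconv} yields $\|r_i\|_0 \leq C \|(T-\widetilde{T}_{\hh})_{|E_{\mu}}\|_{\mathcal{L}(L^2)}$ provided $\hh$ is small enough (using the spectral gap of $T$ around $\mu$, so that components in $E_{\mu}^{\perp}$ decouple).

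Next, I exploit the elementary identity, valid for any $u\in E_{\mu}$ and any discrete eigenpair $(\widetilde{\mu}_{i,\hh},\widetilde{\phi}_{i,\hh})$,
\begin{equation*}
(\mu-\widetilde{\mu}_{i,\hh})\,b(\widetilde{\phi}_{i,\hh},u) = b\bigl((T-\widetilde{T}_{\hh})\widetilde{\phi}_{i,\hh},u\bigr),
\end{equation*}
which follows by writing $\mu u = Tu$, $\widetilde{\mu}_{i,\hh}\widetilde{\phi}_{i,\hh}=\widetilde{T}_{\hh}\widetilde{\phi}_{i,\hh}$ and using self-adjointness of $T$ and $\widetilde{T}_{\hh}$. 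Choosing $u=w_i$ and dividing by $b(\widetilde{\phi}_{i,\hh},w_i)=\|w_i\|_0^2=1-\|r_i\|_0^2$ (which stays uniformly bounded away from zero by the previous paragraph), I reduce the estimate to bounding $b((T-\widetilde{T}_{\hh})\widetilde{\phi}_{i,\hh},w_i)$. Splitting $\widetilde{\phi}_{i,\hh}=w_i+r_i$ I obtain
\begin{equation*}
b\bigl((T-\widetilde{T}_{\hh})\widetilde{\phi}_{i,\hh},w_i\bigr) = b\bigl((T-\widetilde{T}_{\hh})w_i,w_i\bigr) + b\bigl((T-\widetilde{T}_{\hh})r_i,w_i\bigr).
\end{equation*}
Expanding $w_i=\sum_{k=1}^{m}\alpha_{k,i}\phi_k$ in the basis of $E_{\mu}$, with uniformly bounded coefficients, the first term is bounded by $C\sum_{j,k=1}^{m}|b((T-\widetilde{T}_{\hh})\phi_k,\phi_j)|$, giving the first-order contribution in~\eqref{eq:eigRate}. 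For the second term, Cauchy--Schwarz together with $\|r_i\|_0\leq C\|(T-\widetilde{T}_{\hh})_{|E_{\mu}}\|$ and the self-adjoint bound $\|(T-\widetilde{T}_{\hh})r_i\|_0\leq C\|(T-\widetilde{T}_{\hh})_{|E_{\mu}}\|$ on a suitable invariant subspace yields the quadratic contribution $\|(T-\widetilde{T}_{\hh})_{|E_{\mu}}\|^2$.

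The main obstacle, which I would address carefully, is the sharpness of the quadratic bound on the ``cross term'' $b((T-\widetilde{T}_{\hh})r_i,w_i)$: a naive Cauchy--Schwarz gives only a first-order bound in $\|(T-\widetilde{T}_{\hh})_{|E_{\mu}}\|$. The trick is to move $(T-\widetilde{T}_{\hh})$ onto $w_i$ by self-adjointness (modulo controllable errors arising from the fact that $T$ and $\widetilde{T}_{\hh}$ are self-adjoint with respect to \emph{different} but uniformly equivalent bilinear forms), thereby reducing $\|(T-\widetilde{T}_{\hh})r_i\|_0$ to $\|(T-\widetilde{T}_{\hh})_{|E_{\mu}}\|\cdot\|r_i\|_0$ and producing the desired square. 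This is precisely the point where the abstract Babu\v{s}ka--Osborn machinery is needed, and I would either cite~\cite[Thm.~7.3]{Babuska-Osborn:1991} directly or reproduce this specific algebraic manipulation.
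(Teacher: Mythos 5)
Note first that the paper does not actually prove Theorem~\ref{th:BOeig}: it is recalled verbatim from the classical spectral approximation theory, with a pointer to \cite[Theorems 7.1--7.4]{Babuska-Osborn:1991} and \cite[Theorems 9.3--9.7]{Boffi:2010}, so there is no in-paper argument to match and your attempt must stand on its own. Its skeleton is the right one and is indeed how the classical proof is organized: the identity $(\mu-\widetilde{\mu}_{i,\hh})\,b(\widetilde{\phi}_{i,\hh},u)=b((T-\widetilde{T}_{\hh})\widetilde{\phi}_{i,\hh},u)$ for $u\in E_{\mu}$ (which uses only the $b$-self-adjointness of $T$), the splitting $\widetilde{\phi}_{i,\hh}=w_i+r_i$ with $\|r_i\|_0\le C\,\|(T-\widetilde{T}_{\hh})_{|E_{\mu}}\|_{\mathcal{L}(\LTWO(\Omega))}$ from Theorem~\ref{thm:BOeigfun}, and the reduction to a matrix term plus a cross term are all sound.

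The genuine gap is exactly at the step you flag, and the repair you propose does not work as stated. The operator $\widetilde{T}_{\hh}$ is self-adjoint with respect to $\ash$ and $\bsht$, \emph{not} with respect to $b$, and uniform equivalence of $\bsht$ and $b$ only yields an $O(1)$ relation, not smallness: the defect $b(\widetilde{T}_{\hh}r_i,w_i)-b(r_i,\widetilde{T}_{\hh}w_i)=(b-\bsht)(\widetilde{T}_{\hh}r_i,w_i)-(b-\bsht)(r_i,\widetilde{T}_{\hh}w_i)$ must be bounded using the consistency/approximation structure of the discrete form (local projection errors), which your argument never invokes. This sharpening is not optional: the naive Cauchy--Schwarz fallback gives $\|T-\widetilde{T}_{\hh}\|_{\mathcal{L}(\LTWO(\Omega))}\,\|(T-\widetilde{T}_{\hh})_{|E_{\mu}}\|\sim \hh\cdot\hh^{t+1}$, which is weaker than the needed $\hh^{2t}$ whenever $t>2$. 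A clean repair along your own lines is available: since $Tw_i=\mu w_i\in E_{\mu}$ and $r_i\perp E_{\mu}$, one has $b(Tr_i,w_i)=b(r_i,Tw_i)=0$, and $b(r_i,\widetilde{T}_{\hh}w_i)=b(r_i,(\widetilde{T}_{\hh}-T)w_i)$ is genuinely quadratic in $\|(T-\widetilde{T}_{\hh})_{|E_{\mu}}\|$; what is left is precisely the non-self-adjointness defect above, which either requires the VEM-specific estimates for $b-\bsht$ or, as the paper does, an outright citation of the abstract theorem. Be aware also that with only hypothesis~\eqref{eq:unifconv} and no $b$-self-adjointness of $\widetilde{T}_{\hh}$, the abstract non-self-adjoint version of the result involves the $b$-adjoint restricted norm as well, so some appeal to the structure of the discrete forms (or to the cited theorems) cannot be avoided.
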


\subsection{Convergence analysis for the stabilized formulation}
\label{subsec:convnonstab}

In this section we study the convergence of the discrete eigenmodes
provided by the VEM approximation to the continuous ones.
We will consider the stabilized discrete
formulation~\eqref{eq:discreteEigPbm2}. The analysis can be easily applied 
to the non--stabilized one~\eqref{eq:discreteEigPbm}.

\begin{theorem}
  \label{thm:thm1}
  The family of operators $\widetilde{T}_{\hh}$ 
  converges uniformly to the operator
  $T$, that is,  
  \begin{equation}
    \label{eq:uniformconv}
    \norm{T-\widetilde{T}_{\hh}}{\mathcal{L}(\LTWO(\Omega))}\to 0
    \quad\textrm{for}\quad\hh\to 0.
  \end{equation}
\end{theorem}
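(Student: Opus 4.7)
The plan is to establish the uniform convergence through the equivalent characterization~\eqref{eq:unifconv2}: I will show that there exists a positive function $\rho(\hh)$, with $\rho(\hh)\to 0$ as $\hh\to 0$, such that
\begin{equation*}
\norm{(T-\widetilde{T}_{\hh})\fs}{0}\le C\,\rho(\hh)\,\norm{\fs}{0}\qquad\forall \fs\in\LTWO(\Omega),
\end{equation*}
with $C$ independent of $\hh$ and $\fs$. Uniform convergence in $\mathcal{L}(\LTWO(\Omega))$ then follows by taking the supremum over the unit ball of $\LTWO(\Omega)$.

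First, I would fix $\fs\in\LTWO(\Omega)$ and identify $\uss:=T\fs$ with the unique solution of the continuous source problem~\eqref{eq:schrodinger:source-pblm:var-form} and $\ussht:=\widetilde{T}_{\hh}\fs$ with the unique solution of the discrete stabilized source problem~\eqref{eq:schrodinger:source-pblm:var-form2}. At this stage the regularity result~\eqref{eq:regularity} enters crucially: since $\fs\in\LTWO(\Omega)$, there exists $r>0$ depending only on $\Omega$ such that $\uss\in\HS{1+r}(\Omega)$ together with the stability bound $\snorm{\uss}{1+r}\le C\norm{\fs}{0}$. This is what allows me to reduce the error bound to an expression solely in $\norm{\fs}{0}$, which is the key step for uniform convergence (no extra regularity on $\fs$ is required).

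Next, I would apply the a priori $\LTWO$-error estimate~\eqref{eq:source:problem:L2:error:bound} of Theorem~\ref{theorem:GDM:apriori:estimate} to $\uss-\ussht$. Setting $t=\min(k,r)>0$, this gives
\begin{equation*}
\norm{\uss-\ussht}{0}\le C\left(\hh^{t+1}\snorm{\uss}{1+r}+\hh\norm{\fs-\fsh}{0}\right).
\end{equation*}
For the first term I substitute the regularity bound $\snorm{\uss}{1+r}\le C\norm{\fs}{0}$. For the second term, since $\fsh$ is the piecewise $\LTWO$-projection of $\fs$ onto $\PS{k}$, the standard minimality property yields $\norm{\fs-\fsh}{0}\le\norm{\fs}{0}$. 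Combining, and recalling $t+1\ge 1$, I obtain
\begin{equation*}
\norm{(T-\widetilde{T}_{\hh})\fs}{0}=\norm{\uss-\ussht}{0}\le C\,\hh\,\norm{\fs}{0},
\end{equation*}
which, after dividing by $\norm{\fs}{0}$ and taking the supremum, gives $\norm{T-\widetilde{T}_{\hh}}{\mathcal{L}(\LTWO(\Omega))}\le C\hh\to 0$.

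The main potential obstacle is that the $\LTWO$-error estimate in Theorem~\ref{theorem:GDM:apriori:estimate} is stated under the convexity assumption on $\Omega$. If convexity is not in force, I would fall back on the $\HONE$-error bound~\eqref{eq:source:problem:H1:error:bound}, which is unconditional, and use the Poincar\'e inequality (applicable because $\uss-\ussht\in\HONEzr(\Omega)$) to convert it into an $\LTWO$ bound. This yields $\norm{(T-\widetilde{T}_{\hh})\fs}{0}\le C\hh^{\min(t,1)}\norm{\fs}{0}$, which still provides a rate $\rho(\hh)\to 0$ and suffices to conclude~\eqref{eq:uniformconv}. The same argument applies verbatim to $T_{\hh}$ (by using the non-stabilized $L^2$-estimate from Theorem~\ref{theorem:GDM:apriori:estimate} applied to $\ussh$ instead of $\ussht$), which justifies the author's remark that the analysis of both formulations proceeds identically.
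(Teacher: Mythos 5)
Your proposal is correct and follows essentially the same route as the paper: identify $T\fs=\uss$ and $\widetilde{T}_{\hh}\fs=\ussht$, apply the $\LTWO$ a priori estimate of Theorem~\ref{theorem:GDM:apriori:estimate} together with the regularity bound~\eqref{eq:regularity} and the stability of the $\LTWO$-projection of $\fs$ to get $\norm{(T-\widetilde{T}_{\hh})\fs}{0}\leq C\hh\,\norm{\fs}{0}$, and conclude by taking the supremum; you merely spell out the intermediate bound the paper compresses into one line. The added fallback via the $\HONE$ estimate and Poincar\'e for a nonconvex $\Omega$ is a harmless extra observation not present in the paper's argument.
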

\begin{proof}
  The proof follows the same lines as the one of Theorem 6.4 
  in ~\cite{Gardini-Vacca:2017}. We recall it here for the 
  convenience of the reader. 
  Let $\uss$ and $\ussht$ be the solutions to the continuous and the
  discrete source problems~\eqref{eq:schrodinger:source-pblm:var-form}
  and~\eqref{eq:schrodinger:source-pblm:var-form1}, respectively.
  From the $\LTWO$-estimate of Theorem~\ref{theorem:GDM:apriori:estimate} with
  $\fs\in\LTWO(\Omega)$  and the stability estimate in~\eqref{eq:regularity} 
  we have that
  \begin{align*} 
    \norm{ \uss-\ussht }{0} \leq C\hh^{\min(t+1,1)}\norm{\fs}{0}
  \end{align*}
  with $t=\min(k,r)$, $k\geq 1$ being the order of the method and $r$
  being the regularity index of the
  solution $\uss\in\HS{1+r}(\Omega)$ to the continuous source problem. 
  Then it follows that
  \begin{align*}
    \norm{T-\widetilde{T}_{\hh}}{\mathcal{L}(\LTWO(\Omega))} 
    = \sup_{\fs\in\LTWO(\Omega)}\dfrac{\norm{T\fs-\widetilde{T}_{\hh}\fs}{0}}{\norm{\fs}{0}} 
    = \sup_{\fs\in\LTWO(\Omega)}\dfrac{\norm{\uss-\ussht}{0}}{\norm{\fs}{0}}
    \leq C\hh^{\min(t+1,1)}.
  \end{align*}
\end{proof}

We remark that if $f\in\mathcal{E}_{\mu}$ then, thanks to the $L^2$ a priori error estimate 
   in Remark~\ref{rm:estimate-regular-load}, it holds 
   \begin{align*}
   \norm{(T-\widetilde{T}_{\hh})_{|E_{\mu}}}{\mathcal{L}(\LTWO(\Omega))}
   = \sup_{\fs\in\mathcal{E}_{\mu}}\dfrac{\norm{T\fs-\widetilde{T}_{\hh}\fs}{0}}{\norm{\fs}{0}} 
    = \sup_{\fs\in\mathcal{E}_{\mu}}\dfrac{\norm{\uss-\ussht}{0}}{\norm{\fs}{0}}
    \leq C\hh^{t+1}.
   \end{align*}

Putting together Theorem~\ref{thm:BOeigfun},~ Theorem~\ref{thm:thm1}, and 
the above observation, we can state the following result. 
 \begin{theorem}
   Let $\mu$ be an eigenvalue of $T$, with multiplicity $m$, and
   denote the corresponding eigenspace by $E_{\mu}$.  
   Then, exactly $m$ discrete eigenvalues $\widetilde{\mu}_{1, \hh},
   \dots, \widetilde{\mu}_{m, \hh}$ (repeated according to their
   respective multiplicities) converges to $\mu$.
  Moreover, let $\widetilde{E}_{\mu,\hh}$ be the direct sum of the eigenspaces
  corresponding to the discrete eigenvalues
  $\widetilde{\mu}_{1,\hh},\cdots,\widetilde{\mu}_{m,\hh}$ converging to $\mu$.
  Then
  \begin{equation}
    \delta(E_{\mu}, \widetilde{E}_{\mu,\hh})\leq
    C \hh^{t+1}.
  \end{equation}
 \end{theorem}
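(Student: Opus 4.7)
The plan is to assemble three ingredients that have already been established earlier in the paper: the abstract spectral approximation result of Theorem~\ref{thm:BOeigfun}, the uniform convergence of $\widetilde{T}_{\hh}$ to $T$ proved in Theorem~\ref{thm:thm1}, and the sharpened $\LTWO$-estimate stated in the paragraph that follows the proof of Theorem~\ref{thm:thm1}. Everything is already in place; the job of the proof is just to connect these pieces.

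First, I would invoke Theorem~\ref{thm:thm1}, which gives $\NORM{T-\widetilde{T}_{\hh}}{\mathcal{L}(\LTWO(\Omega))}\to 0$ as $\hh\to 0$. This is precisely the uniform convergence hypothesis~\eqref{eq:unifconv} required by the abstract Theorem~\ref{thm:BOeigfun}. Applying the first part of that theorem to the compact self-adjoint operator $T$ and to the eigenvalue $\mu$ of multiplicity $m$ yields the existence of exactly $m$ discrete eigenvalues $\widetilde{\mu}_{1,\hh},\ldots,\widetilde{\mu}_{m,\hh}$ (counted with their multiplicities) converging to $\mu$. This settles the first assertion of the theorem without any further work.

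Next, to obtain the gap estimate I would apply the bound~\eqref{eq:eigenmodeRate} of Theorem~\ref{thm:BOeigfun}, which reduces matters to controlling $\NORM{(T-\widetilde{T}_{\hh})_{|E_{\mu}}}{\mathcal{L}(\LTWO(\Omega))}$. Here I would use the key observation already made right before the theorem statement: if $\fs\in E_{\mu}$, then $\fs$ is an eigenfunction of the continuous problem, so by Remark~\ref{rm:estimate-regular-load} it enjoys the extra regularity needed to trigger the sharpened $\LTWO$-estimate
\begin{equation*}
\norm{T\fs-\widetilde{T}_{\hh}\fs}{0} = \norm{\uss-\ussht}{0} \leq C\hh^{t+1}\norm{\fs}{0}.
\end{equation*}
Taking the supremum over $\fs\in E_{\mu}$ with $\norm{\fs}{0}=1$ produces $\NORM{(T-\widetilde{T}_{\hh})_{|E_{\mu}}}{\mathcal{L}(\LTWO(\Omega))}\leq C\hh^{t+1}$, and inserting this into~\eqref{eq:eigenmodeRate} yields the advertised bound $\delta(E_{\mu},\widetilde{E}_{\mu,\hh})\leq C\hh^{t+1}$.

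There is no real obstacle in this argument: the delicate analytic work (uniform convergence of the solution operators, and the gain of one extra power of $\hh$ in the $\LTWO$-estimate coming from elliptic regularity of eigenfunctions) has already been carried out in Theorem~\ref{thm:thm1}, Theorem~\ref{theorem:GDM:apriori:estimate}, and Remark~\ref{rm:estimate-regular-load}. The only point worth stating carefully is that $E_{\mu}$ is finite-dimensional (as noted in the remark after problem~\eqref{eq:eigPbm}), so restricting to $E_{\mu}$ and taking a supremum over a bounded set of eigenfunctions is unproblematic, and the regularity index $r$ and the constant in $\snorm{\uss}{1+r}\leq C\norm{\uss}{0}$ depend only on $\Omega$ and on $\mu$, hence may be absorbed in $C$.
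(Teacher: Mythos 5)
Your proposal is correct and follows exactly the route the paper takes: the paper obtains this theorem by combining Theorem~\ref{thm:BOeigfun} (applied thanks to the uniform convergence of Theorem~\ref{thm:thm1}) with the observation, based on Remark~\ref{rm:estimate-regular-load}, that $\norm{(T-\widetilde{T}_{\hh})_{|E_{\mu}}}{\mathcal{L}(\LTWO(\Omega))}\leq C\hh^{t+1}$, which is precisely your argument. Your additional remarks on the finite dimensionality of $E_{\mu}$ and the dependence of the constants are sensible clarifications but introduce nothing beyond the paper's reasoning.
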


A direct consequence of the previous result
(cf.~\cite{Babuska-Osborn:1991,Boffi:2010}) is the following one.
\begin{theorem}
  \label{th:convl2}
  Let $\us$ be a unit eigenfunction associated with the eigenvalue
  $\varepsilon$ of multiplicity $m$ and let $\wsht^{(1)},\ldots,\wsht^{(m)}$
  denote linearly independent eigenfunctions associated with the $m$
  discrete eigenvalues of problem \eqref{eq:discreteEigPbm2} converging to $\varepsilon$. 
  Then there exists $\usht\in\SPAN{\wsht^{(1)},\ldots,\wsht^{(m)}}$
  such that
  \begin{align*}
    \norm{\us-\usht}{0}\leq C\hh^{t+1},
  \end{align*}
  where $t=\min\{k,r\}$, being $k$ the order of the method and $r$ the
  regularity index of $\us$.
\end{theorem}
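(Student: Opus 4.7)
The plan is to deduce this estimate directly from the gap bound $\delta(E_\mu,\widetilde{E}_{\mu,\hh}) \leq C \hh^{t+1}$ established in the preceding theorem, where $\mu = 1/\varepsilon$ is the eigenvalue of $T$ corresponding to $\varepsilon$ via the identification $T\us = (1/\varepsilon)\us$. The key step is to unpack the definition of $\delta$: since
\[
\delta(E_\mu,\widetilde{E}_{\mu,\hh}) = \max\bigl(\widehat\delta(E_\mu,\widetilde{E}_{\mu,\hh}),\,\widehat\delta(\widetilde{E}_{\mu,\hh},E_\mu)\bigr),
\]
the bound on $\delta$ in particular controls the one-sided gap $\widehat\delta(E_\mu,\widetilde{E}_{\mu,\hh}) \leq C \hh^{t+1}$, which by definition is exactly the best-approximation quantity $\sup_{v \in E_\mu,\,\norm{v}{0}=1}\inf_{w \in \widetilde{E}_{\mu,\hh}}\norm{v-w}{0}$.

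Applying this at the specific unit eigenfunction $\us \in E_\mu$ from the statement yields $\inf_{w \in \widetilde{E}_{\mu,\hh}}\norm{\us-w}{0} \leq C \hh^{t+1}$. Because $\widetilde{E}_{\mu,\hh}$ is finite-dimensional, and hence closed in $\LTWO(\Omega)$, this infimum is attained; I would take $\usht$ to be the $\LTWO$-orthogonal projection of $\us$ onto $\widetilde{E}_{\mu,\hh}$, which then satisfies $\norm{\us - \usht}{0} \leq C \hh^{t+1}$. It remains to recognise $\usht$ as an element of $\SPAN{\wsht^{(1)},\ldots,\wsht^{(m)}}$: the preceding theorem asserts that exactly $m$ discrete eigenvalues (counted with multiplicity) converge to $\varepsilon$, so $\dim \widetilde{E}_{\mu,\hh} = m$, and the $m$ linearly independent functions $\wsht^{(i)}$ therefore form a basis of $\widetilde{E}_{\mu,\hh}$.

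I do not expect any substantive obstacle here: the statement is essentially a repackaging of the preceding gap estimate. The only points that warrant a little care are the translation between the operator-theoretic setting (eigenvalues $\mu$ of $T$) and the variational setting (eigenvalues $\varepsilon$ of the eigenvalue problem), which is just the relation $\mu = 1/\varepsilon$, and the verification that the infimum defining $\widehat\delta$ is attained so that a concrete element $\usht$ can be exhibited rather than only approximated to within an arbitrary tolerance.
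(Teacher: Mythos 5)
Your proposal is correct and follows exactly the route the paper intends: the paper states Theorem~\ref{th:convl2} as a direct consequence of the gap estimate $\delta(E_{\mu},\widetilde{E}_{\mu,\hh})\leq C\hh^{t+1}$ (citing Babu\v{s}ka--Osborn and Boffi), and your unpacking of the one-sided gap $\hat\delta(E_{\mu},\widetilde{E}_{\mu,\hh})$ at the given unit eigenfunction, together with the identification of $\SPAN{\wsht^{(1)},\ldots,\wsht^{(m)}}$ with the $m$-dimensional space $\widetilde{E}_{\mu,\hh}$ via $\mu=1/\varepsilon$, is precisely the omitted argument. No gaps.
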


\medskip

We now state the usual double order convergence of the eigenvalues.

\begin{theorem}
  \label{theorem:double:convergence:rate}
  Let $\varepsilon$ be an eigenvalue of problem~\eqref{eq:eigPbm} with multiplicity $m$, 
  and denote by $\widetilde{\varepsilon}_{1,h},\cdots,\widetilde{\varepsilon}_{m,h}$ the $m$ discrete eigenvalues of problem \eqref{eq:discreteEigPbm2} 
  converging towards $\varepsilon$. 
  Then the following optimal double order convergence holds:
  \begin{align}
    \label{eq:double:convergence:rate}
    \abs{\varepsilon - \widetilde{\varepsilon}_{i,\hh}}
    \leq C \hh^{2t}\quad\forall i=1,\ldots,m, 
  \end{align}
  with $t=\min\{k,r\}$, being $k$ the order of the method and $r$ the
  regularity index of the eigenfunction corresponding to $\varepsilon$.
\end{theorem}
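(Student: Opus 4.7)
The plan is to invoke the abstract spectral estimate of Theorem~\ref{th:BOeig} for the pair $(T,\widetilde{T}_\hh)$, working with the operator eigenvalues $\mu=1/\varepsilon$ and $\widetilde{\mu}_{i,\hh}=1/\widetilde{\varepsilon}_{i,\hh}$; since both are bounded below by a strictly positive constant, a bound of order $\hh^{2t}$ on $|\mu-\widetilde{\mu}_{i,\hh}|$ translates into the target inequality~\eqref{eq:double:convergence:rate} up to a harmless multiplicative constant. Theorem~\ref{th:BOeig} splits the task into controlling (i) the squared restricted operator norm $\norm{(T-\widetilde{T}_\hh)_{|E_\mu}}{\mathcal{L}(\LTWO(\Omega))}^2$ and (ii) the $m^2$ bilinear-form pairings $\abs{\bs((T-\widetilde{T}_\hh)\phi_k,\phi_j)}$ for an $\LTWO$-orthonormal basis $\phi_1,\ldots,\phi_m$ of $E_\mu$. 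Part (i) comes essentially for free: the observation right after the proof of Theorem~\ref{thm:thm1} already yields $\norm{(T-\widetilde{T}_\hh)_{|E_\mu}}{\mathcal{L}(\LTWO(\Omega))}\le C\hh^{t+1}$, so its square is of higher order $O(\hh^{2(t+1)})$, dominated by $\hh^{2t}$.

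All the work is therefore in (ii). Fix indices $j,k$ and set $\us:=T\phi_k$ and $\ush:=\widetilde{T}_\hh\phi_k$, so that the pairing is $\bs(\us-\ush,\phi_j)$. Because $\phi_j$ is a continuous eigenfunction with eigenvalue $\varepsilon$, the identity $\as(w,\phi_j)=\varepsilon\,\bs(w,\phi_j)$ holds for every $w\in\HONEzr(\Omega)$, so choosing $w=\us-\ush\in\HONEzr(\Omega)$ rewrites the pairing as $\varepsilon^{-1}\as(\us-\ush,\phi_j)$. Introducing the VEM interpolant $\phi_j^I\in\Vhks$ of $\phi_j$, which is well defined since $\phi_j\in\HS{1+r}(\Omega)$ and enjoys~\eqref{eq:CFVEM:interpolant}, I split
\[
\as(\us-\ush,\phi_j)=\as(\us-\ush,\phi_j-\phi_j^I)+\as(\us-\ush,\phi_j^I).
\]
The first summand is controlled by Cauchy--Schwarz together with the $\HONE$ a priori estimate of Theorem~\ref{theorem:GDM:apriori:estimate} in the sharpened form of Remark~\ref{rm:estimate-regular-load}, giving $\snorm{\us-\ush}{1}\le C\hh^t$, together with the interpolation bound $\snorm{\phi_j-\phi_j^I}{1}\le C\hh^t$; their product is $O(\hh^{2t})$.

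The second summand is the main obstacle and carries the classical VEM variational-crime bookkeeping. Using the continuous source identity $\as(\us,\phi_j^I)=\bs(\phi_k,\phi_j^I)$ and its discrete counterpart $\ash(\ush,\phi_j^I)=\bsht(\phi_k,\phi_j^I)$, I rewrite it as
\[
\as(\us-\ush,\phi_j^I)=\big[\bs(\phi_k,\phi_j^I)-\bsht(\phi_k,\phi_j^I)\big]+\big[\ash(\ush,\phi_j^I)-\as(\ush,\phi_j^I)\big],
\]
and each bracket has to be of order $\hh^{2t}$. I would work element by element and, on each $\P\in\Th$, insert the polynomial projections $\Pin{k}$ and $\Piz{k}$ on one argument at a time, exploiting the $k$-consistency~\eqref{eq:k-consistency} of $\ashP$ together with the analogous property of $\bsht$, the stability bounds~\eqref{eq:stability}--\eqref{eq:SP:stability} and their $\beta_*,\beta^*$ counterpart for the stabilized mass form, and the standard polynomial approximation estimates on star-shaped polytopes guaranteed by assumption~\textbf{(A0)}. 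Each such manipulation leaves a product of two projection/interpolation errors, each of order $\hh^t$ for sufficiently regular factors such as $\phi_k,\phi_j\in\HS{1+r}(\Omega)$ and $\ush$ (whose elementwise $\HONE$ best approximation inherits the same rate via Remark~\ref{rm:estimate-regular-load}); summing over the mesh yields the claimed $\hh^{2t}$ bound. Collecting (i) and (ii) in Theorem~\ref{th:BOeig} and converting back from $\mu$ to $\varepsilon$ closes the argument. The step I expect to be the most delicate is precisely this simultaneous control of both consistency errors, in the spirit of~\cite{Gardini-Vacca:2017,Gardini-Manzini-Vacca:2018}, with the additional contribution coming from the potential term $V\us\vs$ sitting inside $\ashP$.
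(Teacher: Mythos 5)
Your route is sound and lands on the stated rate, but it is genuinely different from the one the paper relies on. The paper's proof is deferred to Theorem 6.4 of \cite{Gardini-Vacca:2017}, Theorems 4.2--4.3 of \cite{Mora-Rivera-Velasquez:2017} and Theorem 6.6 of \cite{Gardini-Manzini-Vacca:2018}; those arguments start from the classical algebraic identity that expresses $\widetilde{\varepsilon}_{\hh}-\varepsilon$ through $\as(\us-\usht,\us-\usht)-\varepsilon\,\bs(\us-\usht,\us-\usht)$ plus the consistency defects $\ash(\usht,\usht)-\as(\usht,\usht)$ and $\varepsilon\bigl(\bsht(\usht,\usht)-\bs(\usht,\usht)\bigr)$ evaluated at the \emph{discrete} eigenfunction, and then bound each piece by $\hh^{2t}$ using the eigenfunction estimates and the projection/stabilization bounds. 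You instead invoke the abstract estimate already recorded as Theorem~\ref{th:BOeig}, working with $T$, $\widetilde{T}_{\hh}$ and the continuous eigenbasis $\phi_1,\dots,\phi_m$: the quadratic term is $O(\hh^{2(t+1)})$ by the observation following Theorem~\ref{thm:thm1}, the conversion between $|1/\varepsilon-1/\widetilde{\varepsilon}_{i,\hh}|$ and $|\varepsilon-\widetilde{\varepsilon}_{i,\hh}|$ is harmless as you note, and your manipulation of $\bs((T-\widetilde{T}_{\hh})\phi_k,\phi_j)$ via the eigen-identity, the interpolant $\phi_j^{\INTP}$ and the two source identities is exact, reducing everything to the consistency defects of $\ash$ and $\bsht$ tested on interpolants/projections of smooth eigenfunctions rather than on $\usht$ — a trade-off that buys better regularity of the test objects at the price of carrying the solution operators around. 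One caution on the step you yourself flag as delicate: for the potential part of \eqref{eq:discreteforms} the bare $\LTWO$-orthogonality trick does not immediately produce a product of two $O(\hh^{t})$ factors, because the weight $V$ destroys the orthogonality; you need one further projection of $V$ times the polynomial factor (and a comparison of the virtual/interpolated arguments with the continuous eigenfunctions, so that quantities like $\norm{(I-\Piz{k})\usht}{\LTWO(\P)}$ inherit the rate $\hh^{t+1}$), in the spirit of the variable-coefficient VEM analysis — equivalently, the stated $k$-consistency \eqref{eq:k-consistency} holds for variable $V$ only up to such higher-order remainders. With that bookkeeping made explicit, and with the stabilization contributions of $\SP$ and $\widetilde{S}^{\P}$ bounded through \eqref{eq:SP:stability} and its $\beta_*,\beta^*$ analogue as you indicate, your proposal is a complete and valid alternative proof of \eqref{eq:double:convergence:rate}.
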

\begin{proof}
The proof follows the guidelines of Theorem 6.4 
in~\cite{Gardini-Vacca:2017} 
and Theorems 4.2 and 4.3 in \cite{Mora-Rivera-Velasquez:2017}. 
For an alternative proof see also Theorem 6.6. 
in~\cite{Gardini-Manzini-Vacca:2018}, taking into account that in our case 
the term $\mathcal{N}_{\hh}(\cdot,\cdot)$, relative to the \emph{conformity} error, vanishes.
\end{proof}

Eventually, we state the optimal error estimate for the eigenfunctions in the energy norm. 
\begin{theorem}
\label{thm:h1estimate}
  With the same notation as in Theorem~\ref{th:convl2}, we have
  \begin{align*}
    \snorm{\us-\usht}{1}\le C\hh^{t}, 
  \end{align*}
  where $t=\min(k,r)$, $k$ being the order of the method and $r$ the
  regularity index of $\us\in H^{1+r}(\Omega)$.
\end{theorem}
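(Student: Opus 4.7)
The plan is to bound $\snorm{\us - \usht}{1}$ by inserting the auxiliary element $\wsh := \widetilde{T}_{\hh}(\varepsilon\us)\in\Vhks$, which is the discrete source-problem solution driven by the load $\varepsilon\us\in\LTWO(\Omega)$. Applying the triangle inequality,
\begin{equation*}
  \snorm{\us - \usht}{1} \le \snorm{\us - \wsh}{1} + \snorm{\wsh - \usht}{1},
\end{equation*}
reduces the task to bounding each piece separately.

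For the first piece, $\us$ is the exact solution of the continuous source problem~\eqref{eq:schrodinger:source-pblm:var-form} with load $\varepsilon\us$, so $\wsh$ is precisely the VEM approximation of $\us$ in that source problem. Since this load is proportional to an eigenfunction and thus lies in $\HS{1+r}(\Omega)$, Theorem~\ref{theorem:GDM:apriori:estimate} as sharpened in Remark~\ref{rm:estimate-regular-load} delivers $\snorm{\us - \wsh}{1}\le C\hh^{t}$ with no extra work.

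For the second piece I decompose $\usht = \sum_{i=1}^{m}\alpha_{i}\tilde{\phi}_{i,\hh}$ in a basis of $\LTWO$-normalized discrete eigenfunctions satisfying $\widetilde{T}_{\hh}\tilde{\phi}_{i,\hh} = \tilde{\phi}_{i,\hh}/\widetilde{\varepsilon}_{i,\hh}$, and rearrange:
\begin{equation*}
  \wsh - \usht = \varepsilon\,\widetilde{T}_{\hh}(\us - \usht) + \sum_{i=1}^{m}\frac{\varepsilon - \widetilde{\varepsilon}_{i,\hh}}{\widetilde{\varepsilon}_{i,\hh}}\,\alpha_{i}\,\tilde{\phi}_{i,\hh}.
\end{equation*}
The first summand is controlled via the $\LTWO$-to-$\HONE$ stability $\snorm{\widetilde{T}_{\hh}\fs}{1}\le C\norm{\fs}{0}$, which follows by testing the defining equation of $\widetilde{T}_{\hh}\fs$ against itself and exploiting coercivity of $\ash$ together with continuity of $\bsht$; combined with the $\LTWO$ estimate of Theorem~\ref{th:convl2} this yields $\snorm{\varepsilon\,\widetilde{T}_{\hh}(\us - \usht)}{1}\le C\norm{\us - \usht}{0}\le C\hh^{t+1}$. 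The second summand is bounded using the double-order eigenvalue convergence $\abs{\varepsilon - \widetilde{\varepsilon}_{i,\hh}}\le C\hh^{2t}$ from Theorem~\ref{theorem:double:convergence:rate} together with the uniform $\HONE$ bound $\snorm{\tilde{\phi}_{i,\hh}}{1}^{2}\le C\widetilde{\varepsilon}_{i,\hh}$ coming from $\ash(\tilde{\phi}_{i,\hh},\tilde{\phi}_{i,\hh})=\widetilde{\varepsilon}_{i,\hh}$ and the stability~\eqref{eq:stability} of $\ash$. Collecting the estimates yields $\snorm{\us - \usht}{1}\le C(\hh^{t}+\hh^{t+1}+\hh^{2t})\le C\hh^{t}$.

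The main delicate point is the case of multiplicity $m>1$, for which the eigenbasis decomposition above is essential; when $m=1$ the identity $\usht = \widetilde{T}_{\hh}(\widetilde{\varepsilon}_{\hh}\usht)$ allows the simpler rewriting $\wsh - \usht = \widetilde{T}_{\hh}(\varepsilon\us - \widetilde{\varepsilon}_{\hh}\usht)$ and the estimate follows immediately from the stability of $\widetilde{T}_{\hh}$. Establishing this $\LTWO$-to-$\HONE$ stability and justifying the use of the discrete eigenbasis together with the bounds on its elements are routine but deserve to be made explicit.
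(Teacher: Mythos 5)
Your proof is correct and follows essentially the same route as the paper's: both arguments reduce the error to an eigenvalue-gap term of order $\hh^{2t}$, a source-problem $\HONE$ error of order $\hh^{t}$ (Theorem~\ref{theorem:GDM:apriori:estimate} with the regular load $\varepsilon\us$, as in Remark~\ref{rm:estimate-regular-load}), and the term $\widetilde{T}_{\hh}(\us-\usht)$, bounded by $C\hh^{t+1}$ through coercivity of $\ash$, continuity of $\bsht$, and Theorem~\ref{th:convl2}. The only real difference is that the paper uses the one-line identity $\us-\usht=(\varepsilon-\widetilde{\varepsilon}_{\hh})T\us+\widetilde{\varepsilon}_{\hh}(T-\widetilde{T}_{\hh})\us+\widetilde{\varepsilon}_{\hh}\widetilde{T}_{\hh}(\us-\usht)$, which implicitly treats $\usht$ as a single discrete eigenfunction, whereas your expansion of $\usht$ in the discrete eigenbasis (with the coefficient bound coming from $\norm{\usht}{0}=1$ and $\bsht$-orthonormality, which you should indeed state) makes the multiplicity $m>1$ case explicit.
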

\begin{proof}
The proof of this result is similar to the one for the finite element method.  
We briefly report it here for the sake of completeness. 
It holds that
  \begin{equation*}
    \us - \usht = \varepsilon T\us - \widetilde{\varepsilon}_{\hh} \widetilde{T}_{\hh}\usht 
    = (\varepsilon - \widetilde{\varepsilon}_{\hh})T\us + \widetilde{\varepsilon}_{\hh}(T - \widetilde{T}_{\hh})\us + \widetilde{\varepsilon}_{\hh} \widetilde{T}_{\hh}(\us - \usht),
  \end{equation*}
  then 
  \begin{align*}
    \snorm{\us -\usht}{1}
    \leq \abs{\varepsilon - \widetilde{\varepsilon}_{\hh}} \, \snorm{T\us}{1} 
    + \widetilde{\varepsilon}_{\hh}\snorm{(T-\widetilde{T}_{\hh})\us}{1} 
    + \widetilde{\varepsilon}_{\hh} \snorm{\widetilde{T}_{\hh}(\us - \usht)}{1}.
  \end{align*}
  The first term at the right-hand side of the previous equation is of
  order $\hh^{2t}$, while the second one is of order $\hh^{t}$.
  Finally, for the last term, using \eqref{eq:stability}, the continuity of the operator $\widetilde{T}_{\hh}$, and Theorem \ref{th:convl2}, we obtain
  \begin{align*}
    \snorm{ \widetilde{T}_{\hh}(\us-\usht)}{1}^2 
    &\leq \dfrac{1}{\alpha_*}\ash(\widetilde{T}_{\hh}(\us - \usht),\widetilde{T}_{\hh}(\us - \usht)) \\
    &= \dfrac{1}{\alpha_*}\bsht(\us-\usht,\widetilde{T}_{\hh}(\us - \usht)) 
    \le C\norm{\us-\usht}{0}^2 
    \le C \hh^{2t+2}.
  \end{align*}
\end{proof}


\section{Numerical experiments}
\label{sec:tests}

In this section, we investigate the behavior of our virtual element
method for the numerical treatment of the eigenvalue
problem~\eqref{eq:eigPbm}.
In particular, we present the performance of the conforming VEM
applied to the eigenvalue problem on a two-dimensional square domains.
We use the ``diagonal'' stabilization \cite{VemGuide} for the
bilinear form $\ashP(\cdot,\cdot)$ (cf. \eqref{eq:discreteforms}) and $\bshtP(\cdot,\cdot)$ (cf. \eqref{eq:discretebstab}, which reads
as follows:
\begin{align*}
  S^{\P}(\vsh,\wsh) &= \sigma_{\P}\vvh^T\wvh,\\
 \widetilde{S}^{\P}(\vsh,\wsh) &= \tau_{\P} h_{\P}^2 \vvh^T\wvh,
\end{align*}
where $\vvh$, $\wvh$ denote the vectors containing the values of the
local degrees of freedom associated to $\vsh$, $\wsh\in\Vhk{}(\P)$
and the parameters $\sigma_{\P}$ and $\tau_{\P}$ are two positive $h$-independent
constants. In the numerical tests we choose $\sigma_{\P}$ as the mean value of the eigenvalues of the matrix stemming from the
consistency term $(\Piz{k-1}\nabla\cdot,\Piz{k-1}\nabla\cdot)_{\P}$
for the grad-grad form (see \eqref{eq:discreteforms}). In the
same way we pick $\tau_{\P}$ as the mean value of the eigenvalues of the matrix
resulting from the term 
$\frac{1}{h_{\P}^2} (\Pi_k^{0, \P} \cdot, \Pi_k^{0, \P} \cdot)_{\P}$ 
for the mass matrix (see \eqref{eq:discretebstab}).

The convergence of the numerical approximation is shown through the
relative error quantity
\begin{align*}
  \text{\textit{Relative approximation error}} := \frac{\abs{\varepsilon-\varepsilon_{\hh}}}{\varepsilon},
\end{align*}
where $\varepsilon$ denotes an eigenvalue of the continuous problem and
$\varepsilon_{\hh}$ its virtual element approximation.

\subsection{Test~1.}

\begin{figure}
  \centering
  \begin{tabular}{cccc}
    \begin{overpic}[scale=0.2]{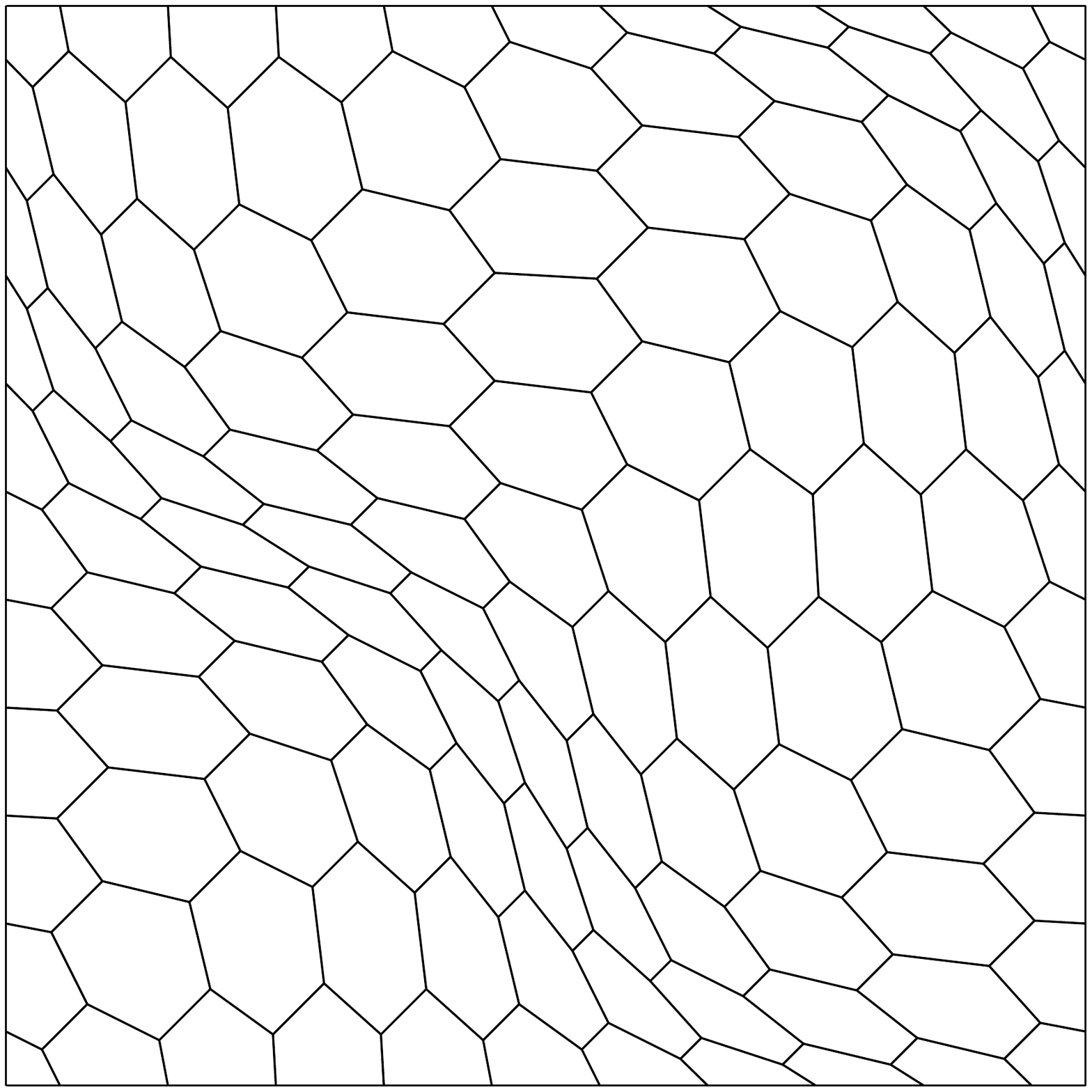}
    \end{overpic} 
    &
    \begin{overpic}[scale=0.2]{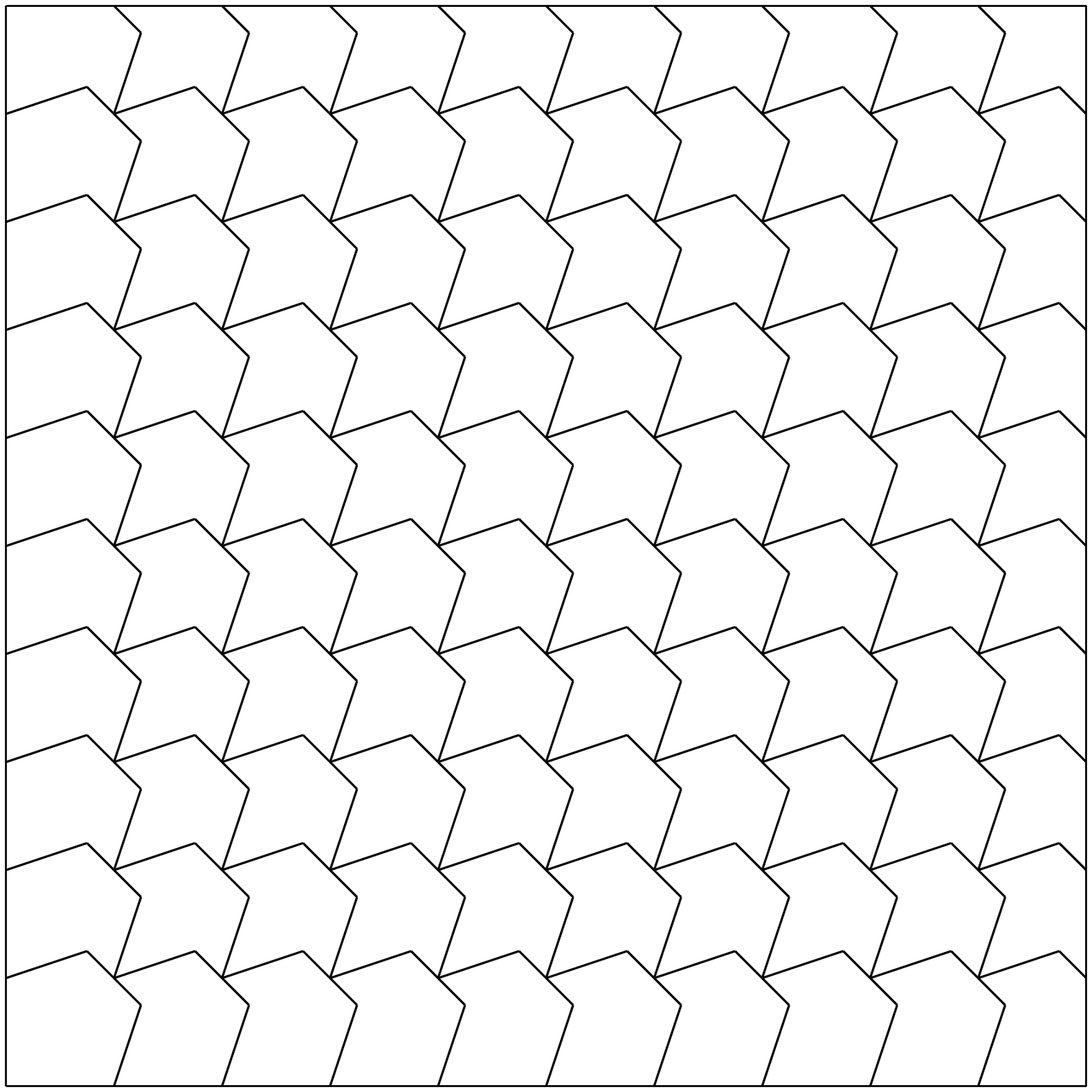}
    \end{overpic}
    &
    \begin{overpic}[scale=0.2]{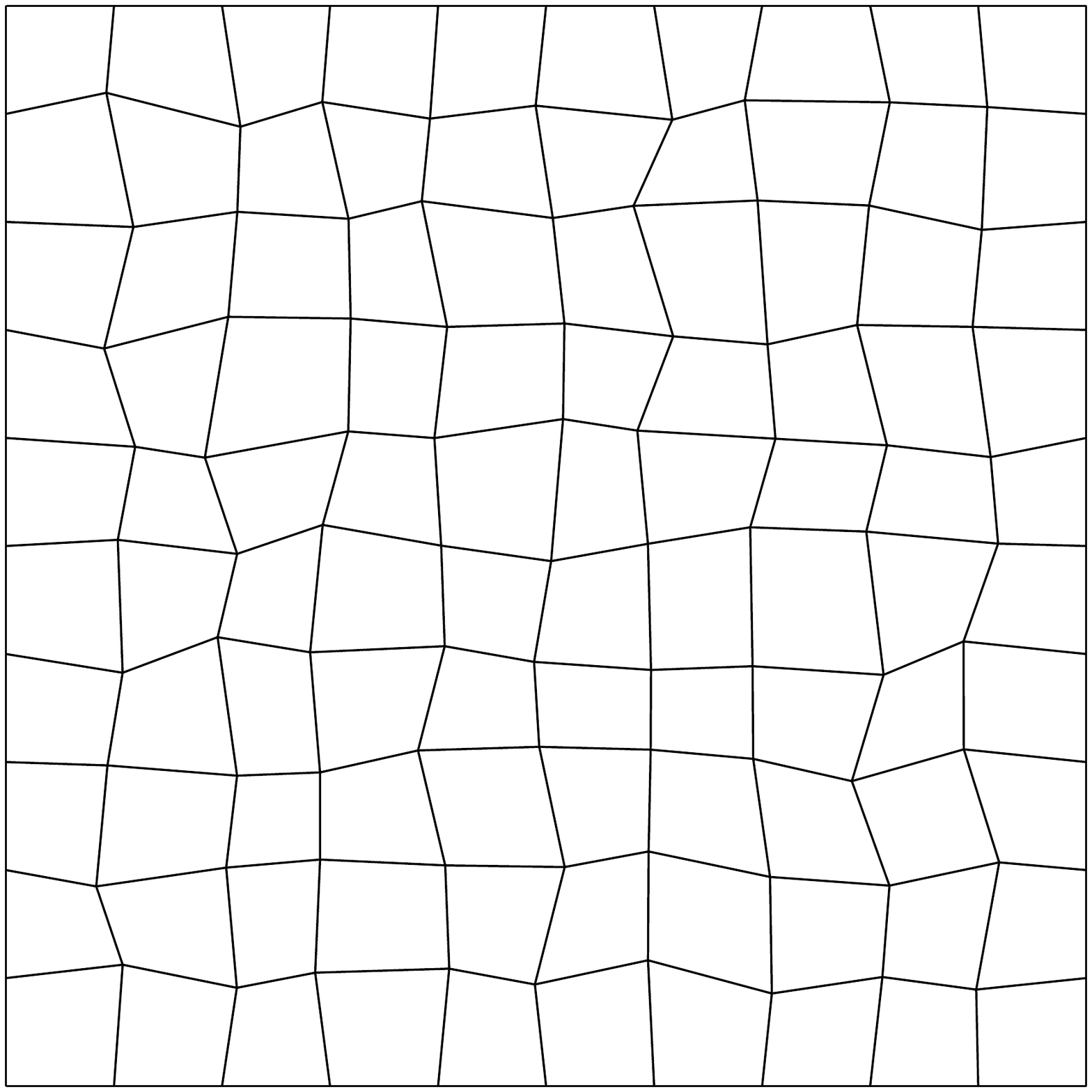}
    \end{overpic} 
    &
    \begin{overpic}[scale=0.2]{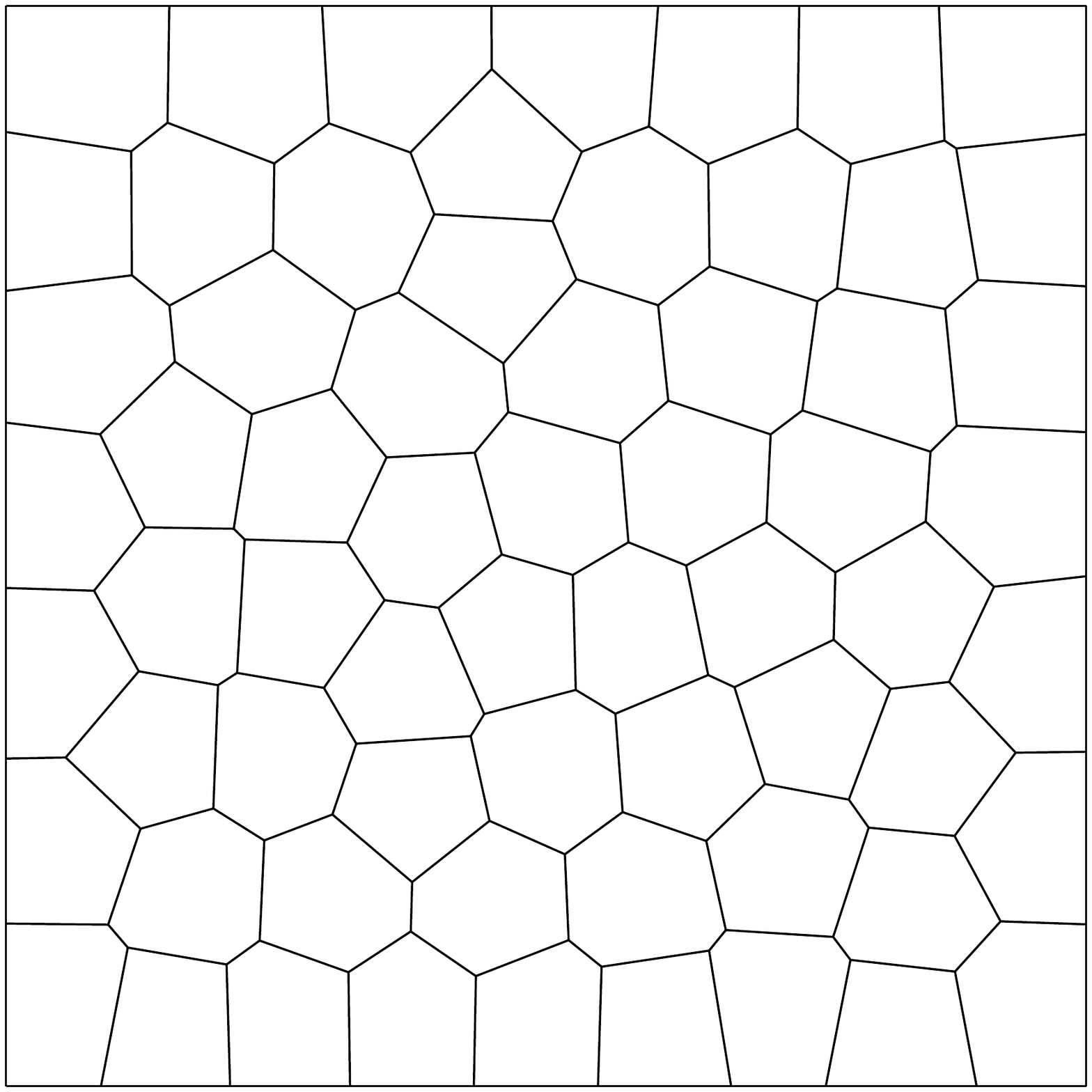}
    \end{overpic}
    \\[-0.25em]
    \textit{Mesh}~1 & \textit{Mesh}~2 & \textit{Mesh}~3 & \textit{Mesh}~4
  \end{tabular}
  \caption{Base meshes of the following mesh families from left to
    right: mainly hexagonal mesh; nonconvex octagonal mesh; randomized
    quadrilateral mesh; Voronoi mesh.}
  \label{fig:Meshes}
\end{figure}

\begin{figure}
  \centering
  \begin{tabular}{cc}
    \begin{overpic}[scale=0.35]{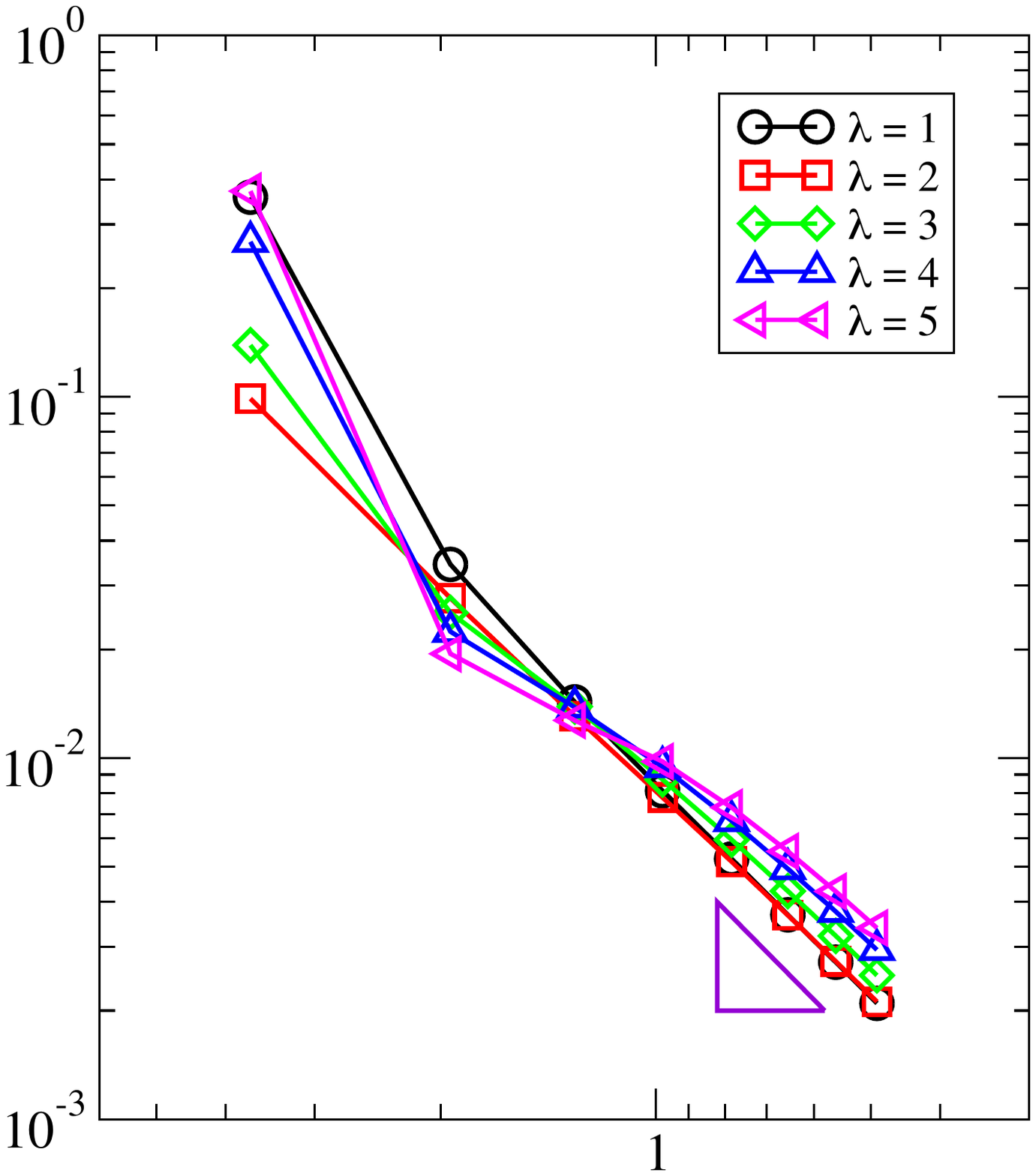}
      \put(-5,14){\begin{sideways}\textbf{Relative approximation error}\end{sideways}}
      \put(32,-5) {\textbf{Mesh size $\mathbf{h}$}}
      \put(56,19){\textbf{2}}
    \end{overpic}
    &\qquad
    \begin{overpic}[scale=0.35]{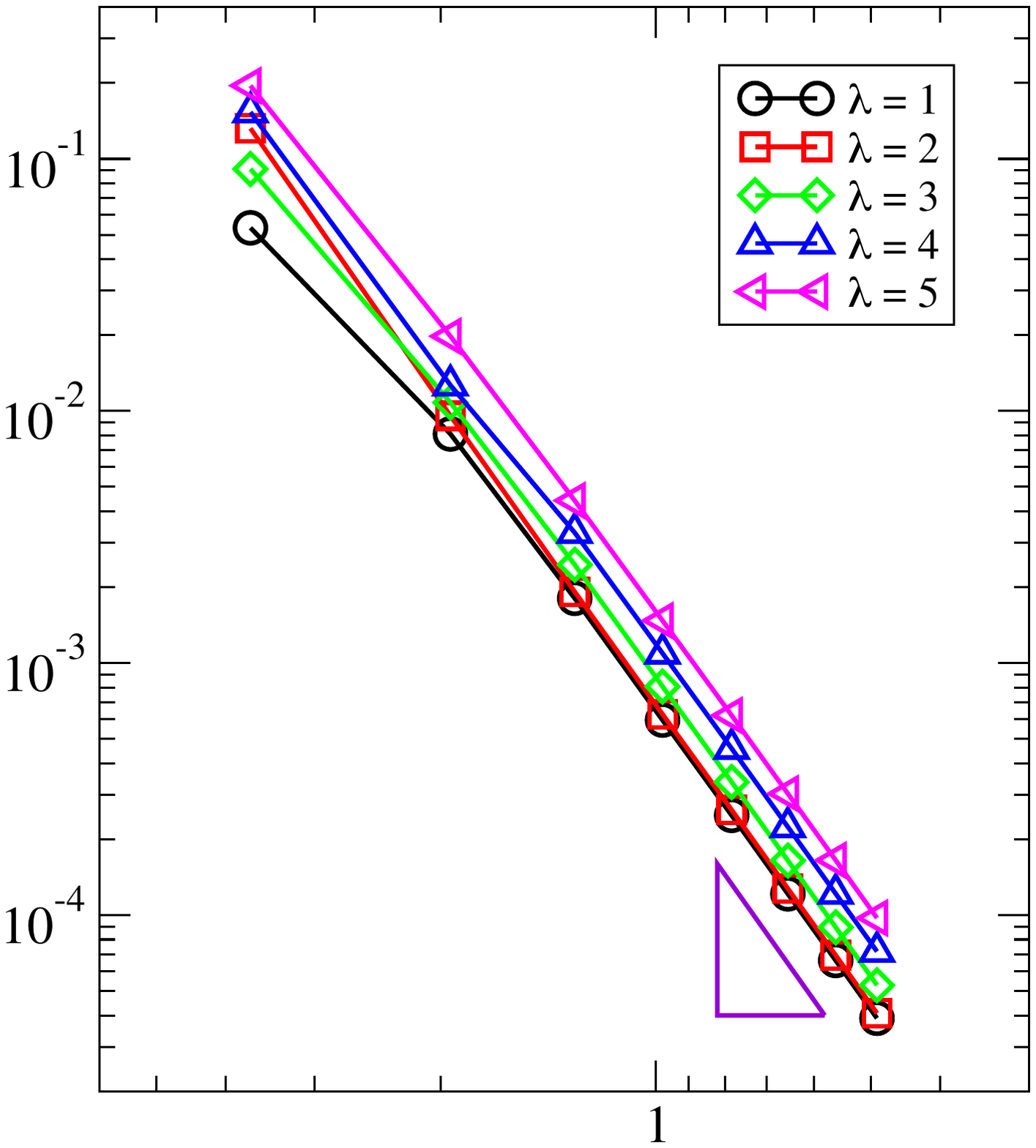}
      \put(-5,15){\begin{sideways}\textbf{Relative approximation error}\end{sideways}}
      \put(32,-5) {\textbf{Mesh size $\mathbf{h}$}}
      \put(56,18){\textbf{4}}
    \end{overpic}
    \\[0.5em]\textbf{$(k=1)$} & \textbf{$(k=2)$}\\[1em]
    \begin{overpic}[scale=0.35]{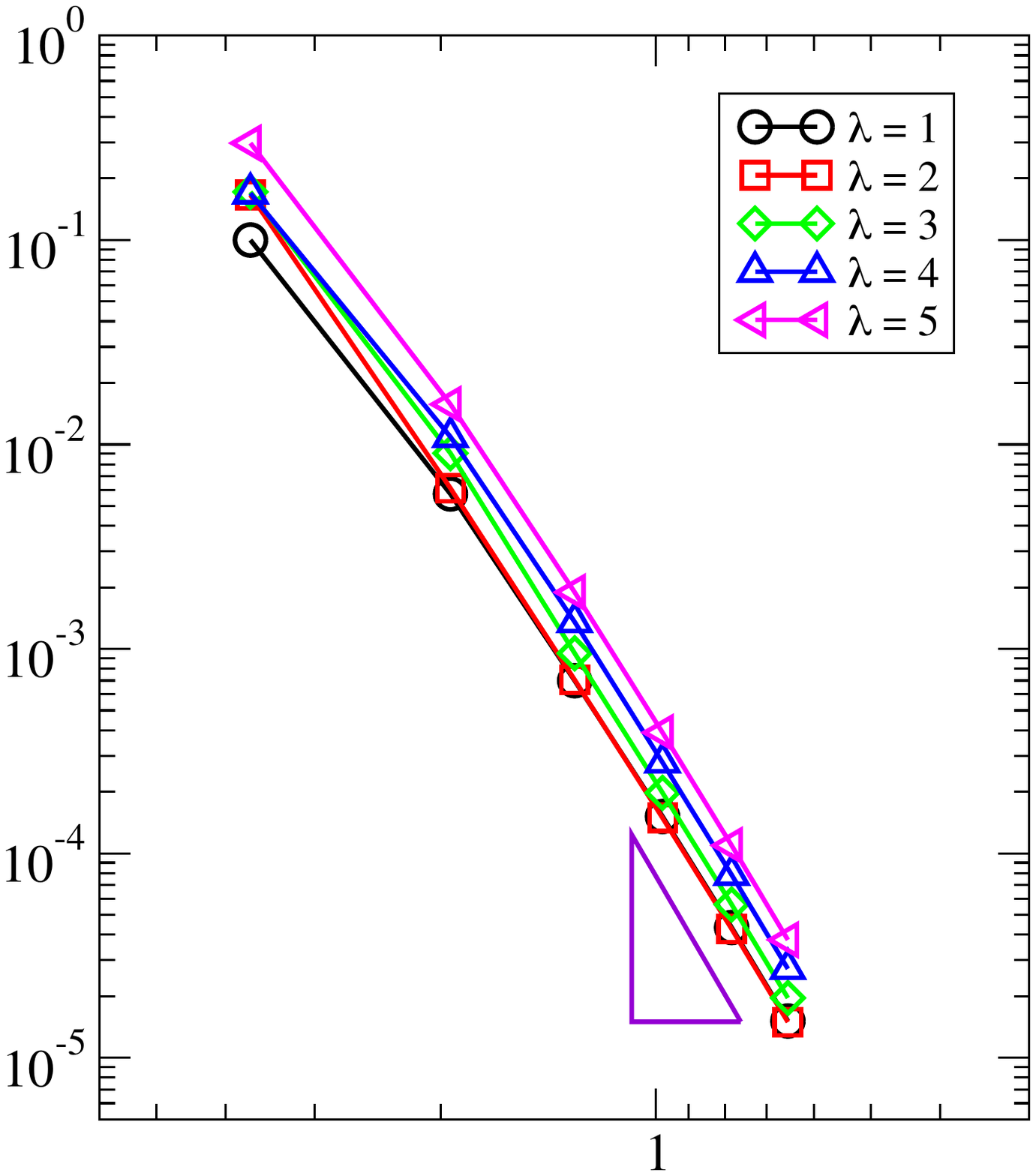}
      \put(-5,15){\begin{sideways}\textbf{Relative approximation error}\end{sideways}}
      \put(32,-5) {\textbf{Mesh size $\mathbf{h}$}}
      \put(49,21){\textbf{6}}
    \end{overpic}
    &\qquad
    \begin{overpic}[scale=0.35]{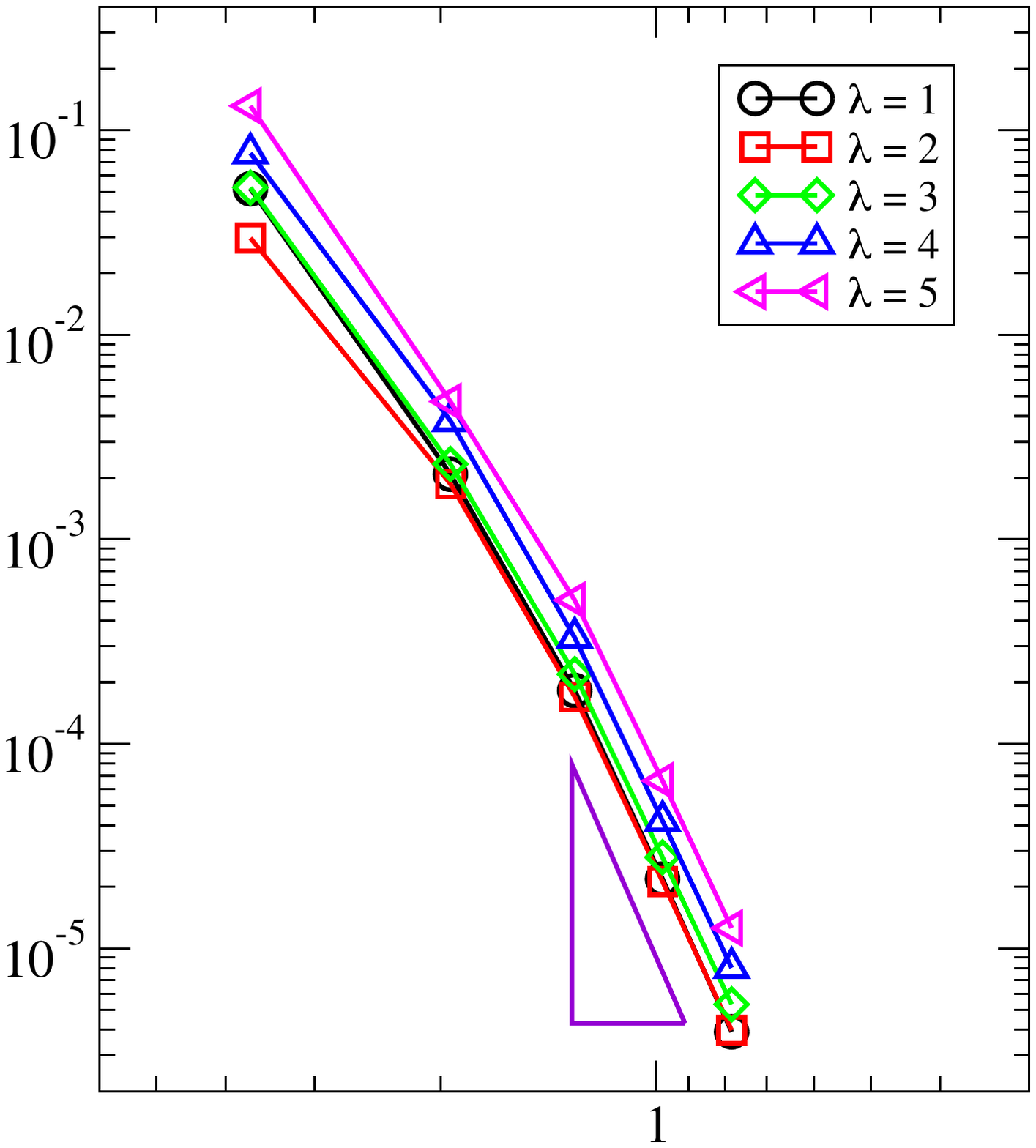}
      \put(-5,15){\begin{sideways}\textbf{Relative approximation error}\end{sideways}}
      \put(32,-5) {\textbf{Mesh size $\mathbf{h}$}}
      \put(44,22){\textbf{8}}
    \end{overpic}
    \\[0.5em]\textbf{$(k=3)$} & \textbf{$(k=4)$}
  \end{tabular}
  \caption{Test Case~1: Convergence plots for the approximation of the
    first five distinct eigenvalues $\lambda=1,2,3,4,5$ using the mainly
    hexagonal mesh and the virtual spaces $\VS{\hh}_{k}$, with $k=1$
    (top-leftmost panel); $k=2$ (top-rightmost panel); $k=3$
    (bottom-leftmost panel); $k=4$ (bottom-rightmost panel).
    The generalized eigenvalue problem uses the nonstabilized bilinear
    form $\bsh(\cdot,\cdot)$. }
  \label{fig:hexa:rates}
\end{figure}

\begin{figure}
  \centering
  \begin{tabular}{cc}
    \begin{overpic}[scale=0.35]{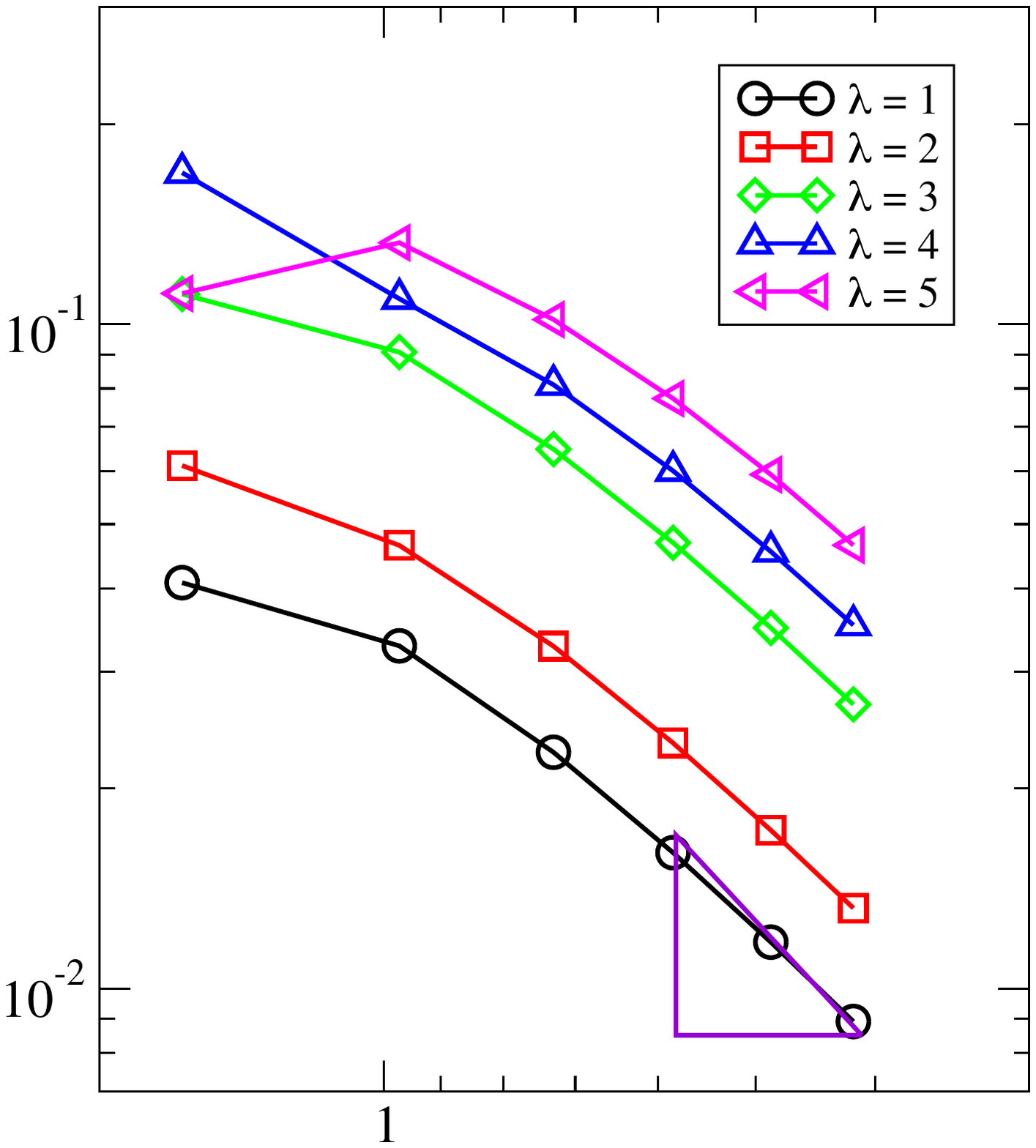}
      \put(-5,14){\begin{sideways}\textbf{Relative approximation error}\end{sideways}}
      \put(32,-5) {\textbf{Mesh size $\mathbf{h}$}}
      \put(56,18){\textbf{2}}
    \end{overpic} 
    &\qquad
    \begin{overpic}[scale=0.35]{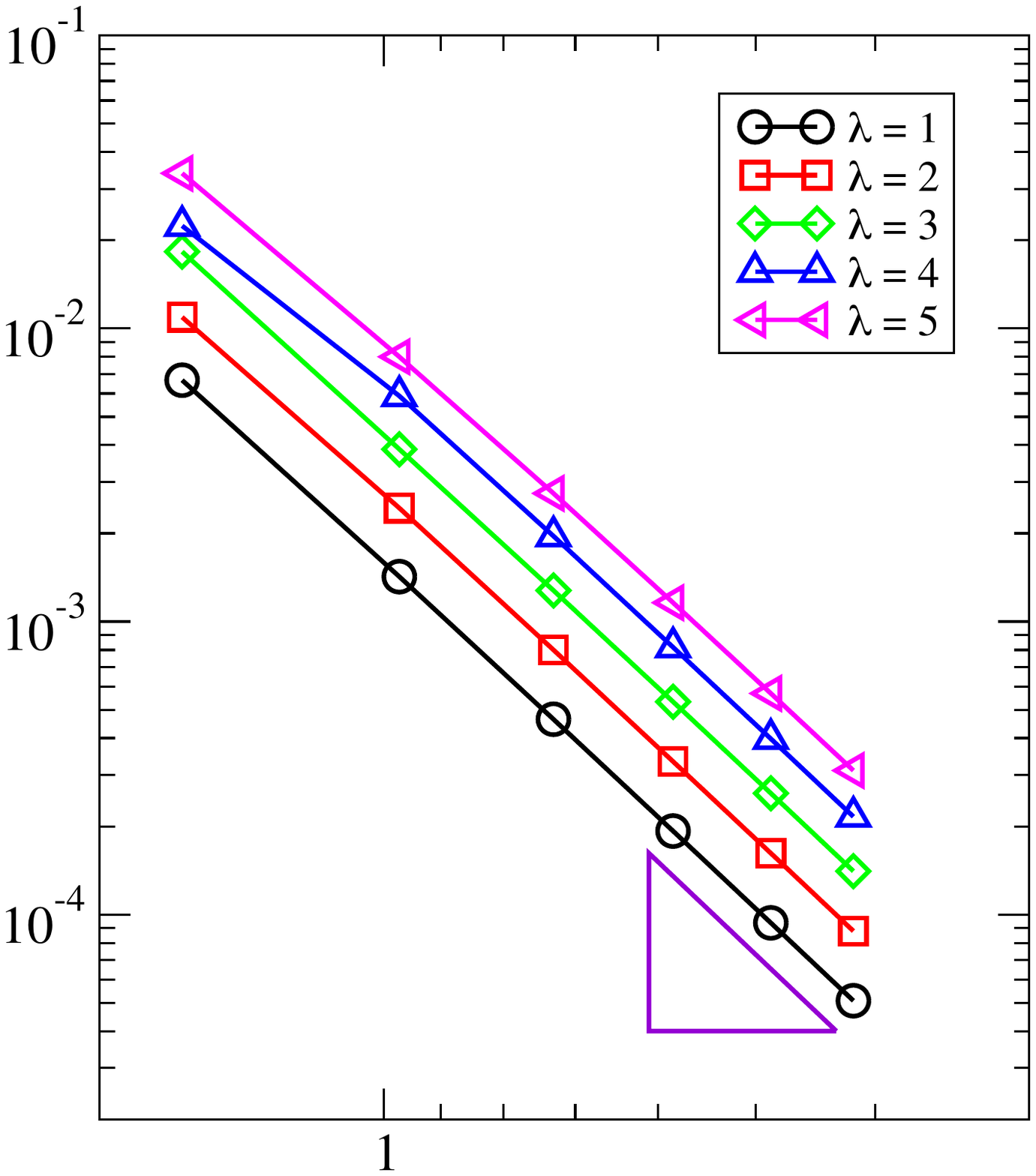}
      \put(-5,15){\begin{sideways}\textbf{Relative approximation error}\end{sideways}}
      \put(32,-5) {\textbf{Mesh size $\mathbf{h}$}}
      \put(53,20){\textbf{4}}
    \end{overpic}
    \\[0.5em]\textbf{$(k=1)$} & \textbf{$(k=2)$}\\[1em]
    \begin{overpic}[scale=0.35]{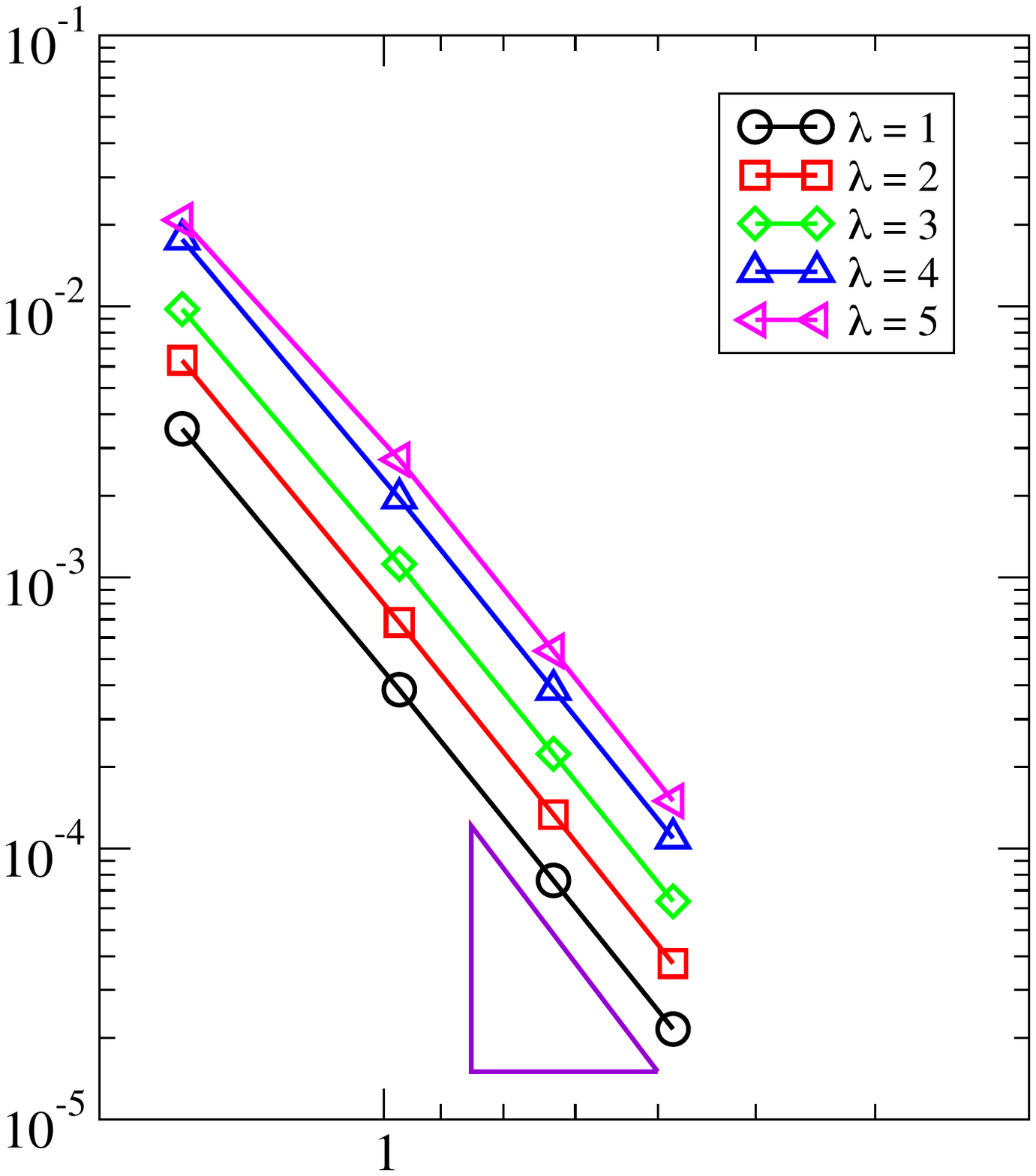}
      \put(-5,15){\begin{sideways}\textbf{Relative approximation error}\end{sideways}}
      \put(32,-5) {\textbf{Mesh size $\mathbf{h}$}}
      \put(35.5,20){\textbf{6}}
    \end{overpic} 
    &\qquad
    \begin{overpic}[scale=0.35]{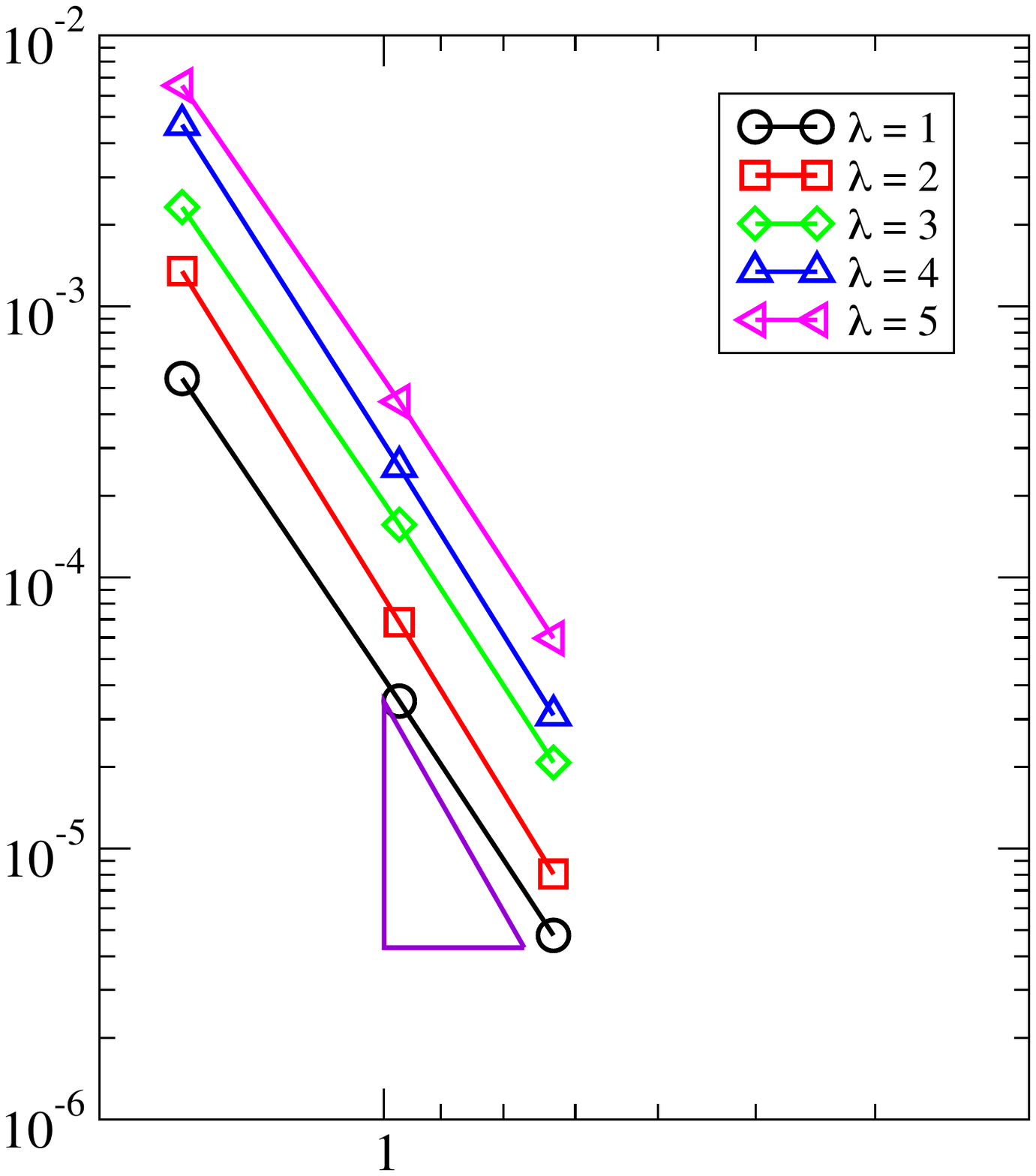}
      \put(-5,15){\begin{sideways}\textbf{Relative approximation error}\end{sideways}}
      \put(32,-5) {\textbf{Mesh size $\mathbf{h}$}}
      \put(28.5,31){\textbf{8}}
    \end{overpic}
    \\[0.5em]\textbf{$(k=3)$} & \textbf{$(k=4)$}
  \end{tabular}
  \caption{Test Case~1:  Convergence plots for the approximation of the
    first five distinct eigenvalues $\lambda=1,2,3,4,5$ using the 
    nonconvex octagon mesh
    and the virtual spaces $\VS{\hh}_{k}$, with $k=1$
    (top-leftmost panel); $k=2$ (top-rightmost panel); $k=3$
    (bottom-leftmost panel); $k=4$ (bottom-rightmost panel).
    The generalized eigenvalue problem uses the nonstabilized bilinear
    form $\bsh(\cdot,\cdot)$.  }
  \label{fig:octa:rates}
\end{figure}

In this test case, we numerically solve the 2D Quantum Harmonic
Oscillator problem that corresponds to the Schrodinger equation with
the harmonic potential $\Vs(\xs,\ys)=(1/2)(\xs^2+\ys^2)$.
The eigenvalues are a suitable combinations of the eigenvalues of the
one dimensional problem  and are given by the natural
numbers $n=1,2,3,\ldots$, each one with multiplicity $n$.
The eigenfunctions of such a problem are obtained through the
two-dimensional tensor product of one-dimensional Hermite functions,
which are given by the Hermite polynomials multiplied by the Gaussian
function $\ws(\xs,\ys)=\exp\big(-(\xs^2+\ys^2)\big)$.
As these eigenfunctions are rapidly decreasing to zero for $\xs$,
$\ys$ tending to infinity due to the Gaussian term, we can assume
homogeneous Dirichlet boundary conditions if the computational domain
is sufficiently large.
For such reason, we solve the eigenvalue problem on the square domain
$\Omega=]-10,10[\times]-10,10[$.
On this domain, we consider four different mesh sequences, hereafter
denoted by:
\begin{itemize}
\item \textit{Mesh~1}, mainly hexagonal mesh with continuously
  distorted cells;
\item \textit{Mesh~2}, nonconvex octagonal mesh;
\item \textit{Mesh~3}, randomized quadrilateral mesh;
\item \textit{Mesh~4}, central Voronoi tessellation.
\end{itemize}
The first mesh of each sequence is shown in Figure~\ref{fig:Meshes}.
These mesh sequences have been widely used in the mimetic finite
difference and virtual element literature, and a detailed description
of their construction can easily be found elsewhere, for example,
see~\cite{BeiraodaVeiga-Lipnikov-Manzini:2011}.

The convergence curves for the four mesh sequences above are reported
in Figures~\ref{fig:hexa:rates}, \ref{fig:octa:rates},
\ref{fig:quads:rates}, and \ref{fig:voro:rates}.
\begin{figure}
  \centering
  \begin{tabular}{cc}
    \begin{overpic}[scale=0.35]{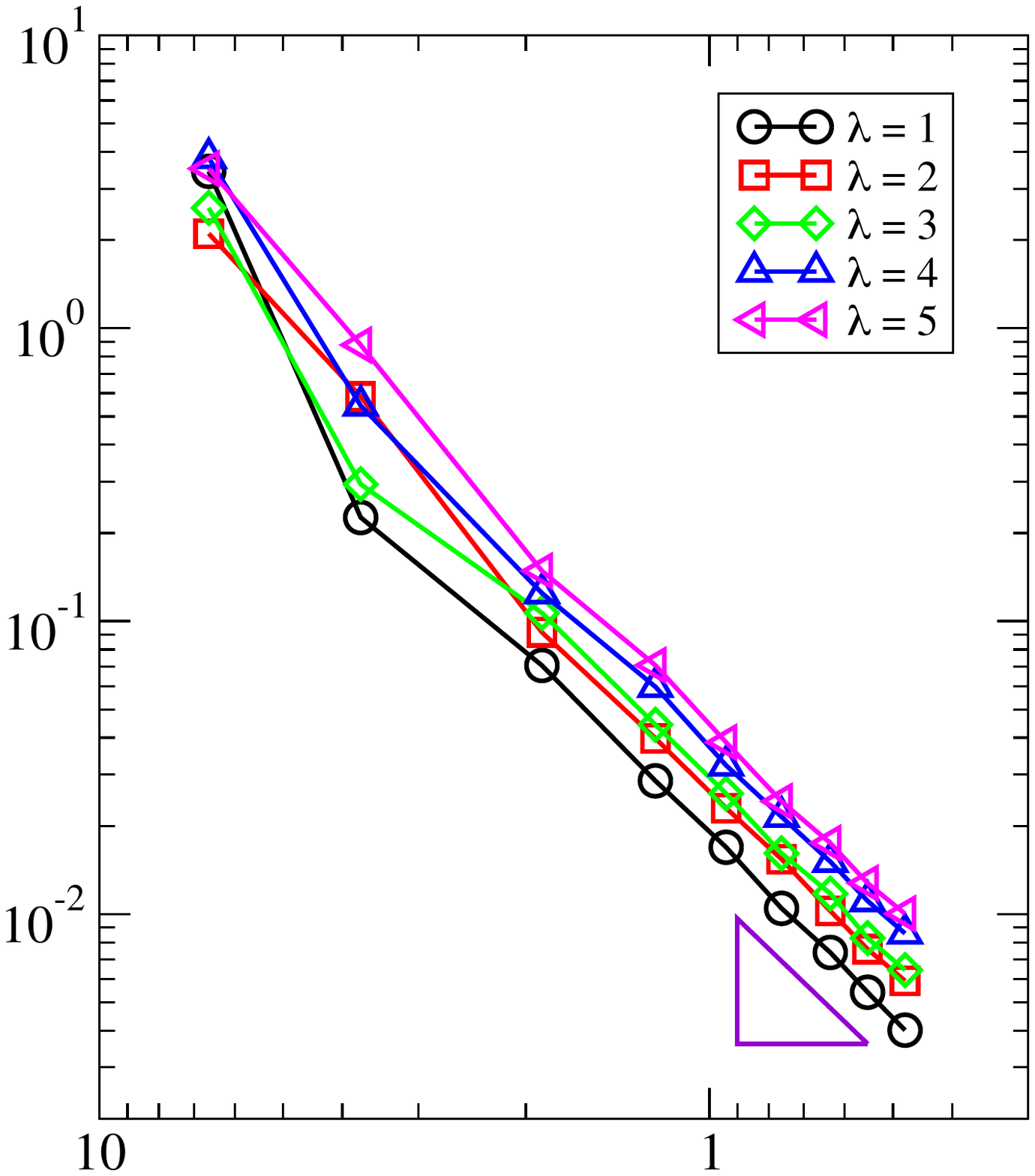}
      \put(-5,14){\begin{sideways}\textbf{Relative approximation error}\end{sideways}}
      \put(32,-5) {\textbf{Mesh size $\mathbf{h}$}}
      \put(57,16){\textbf{2}}
    \end{overpic} 
    &\qquad
    \begin{overpic}[scale=0.35]{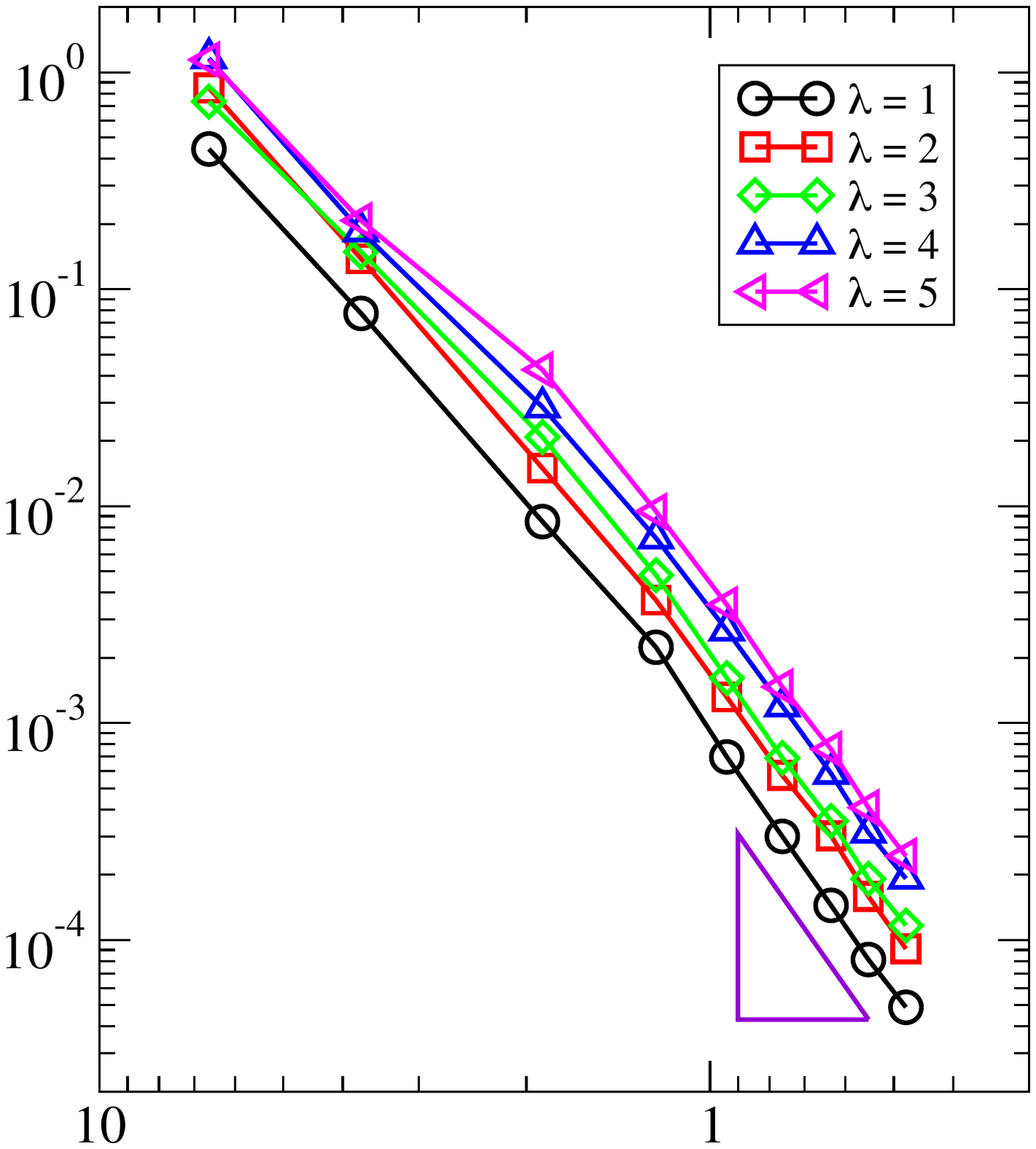}
      \put(-5,15){\begin{sideways}\textbf{Relative approximation error}\end{sideways}}
      \put(32,-5) {\textbf{Mesh size $\mathbf{h}$}}
      \put(57,19){\textbf{4}}
    \end{overpic}
    \\[0.5em]\textbf{$(k=1)$} & \textbf{$(k=2)$} \\[1em]
    \begin{overpic}[scale=0.35]{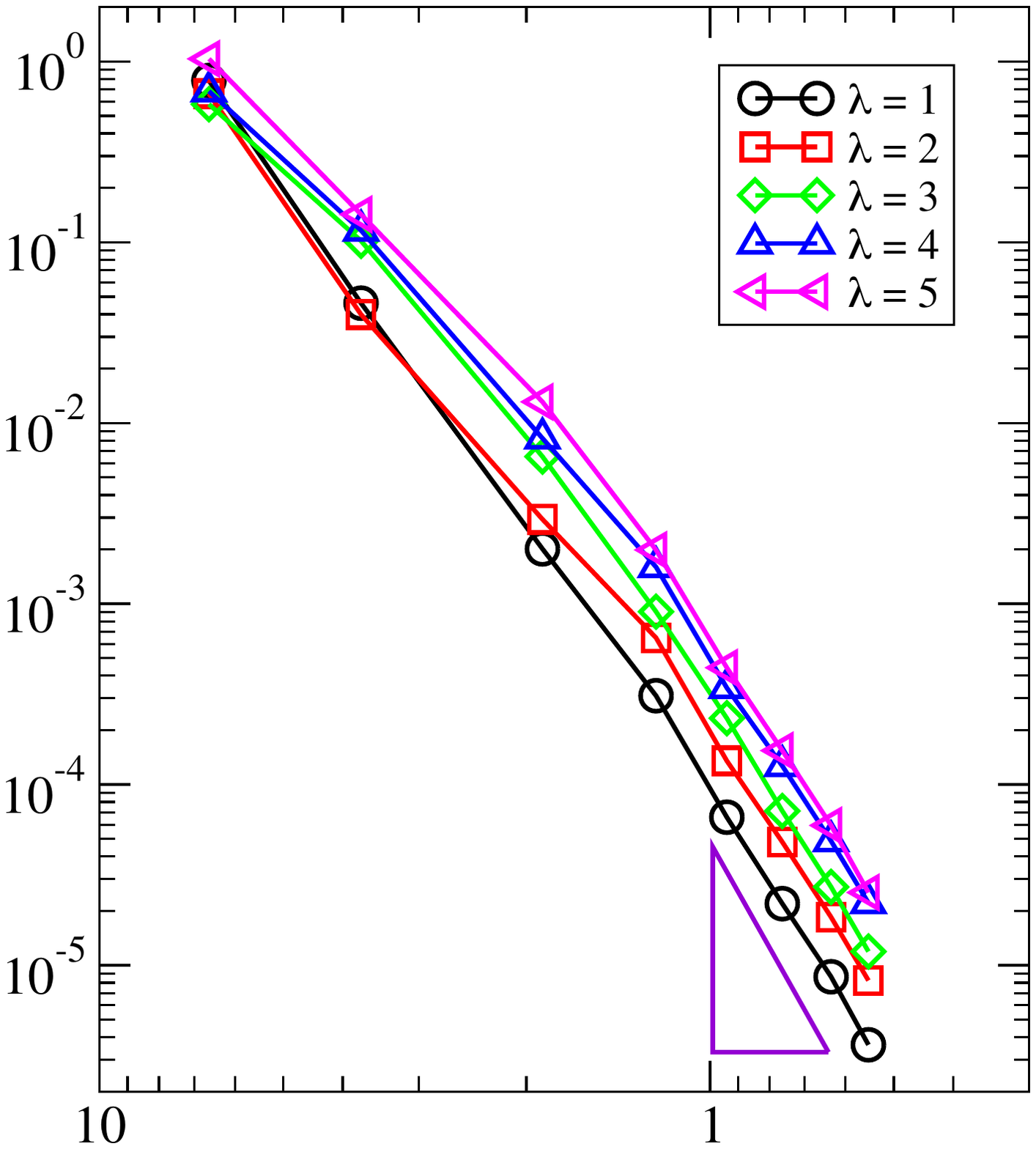}
      \put(-5,15){\begin{sideways}\textbf{Relative approximation error}\end{sideways}}
      \put(32,-5) {\textbf{Mesh size $\mathbf{h}$}}
      \put(55,18){\textbf{6}}
    \end{overpic} 
    &\qquad
    \begin{overpic}[scale=0.35]{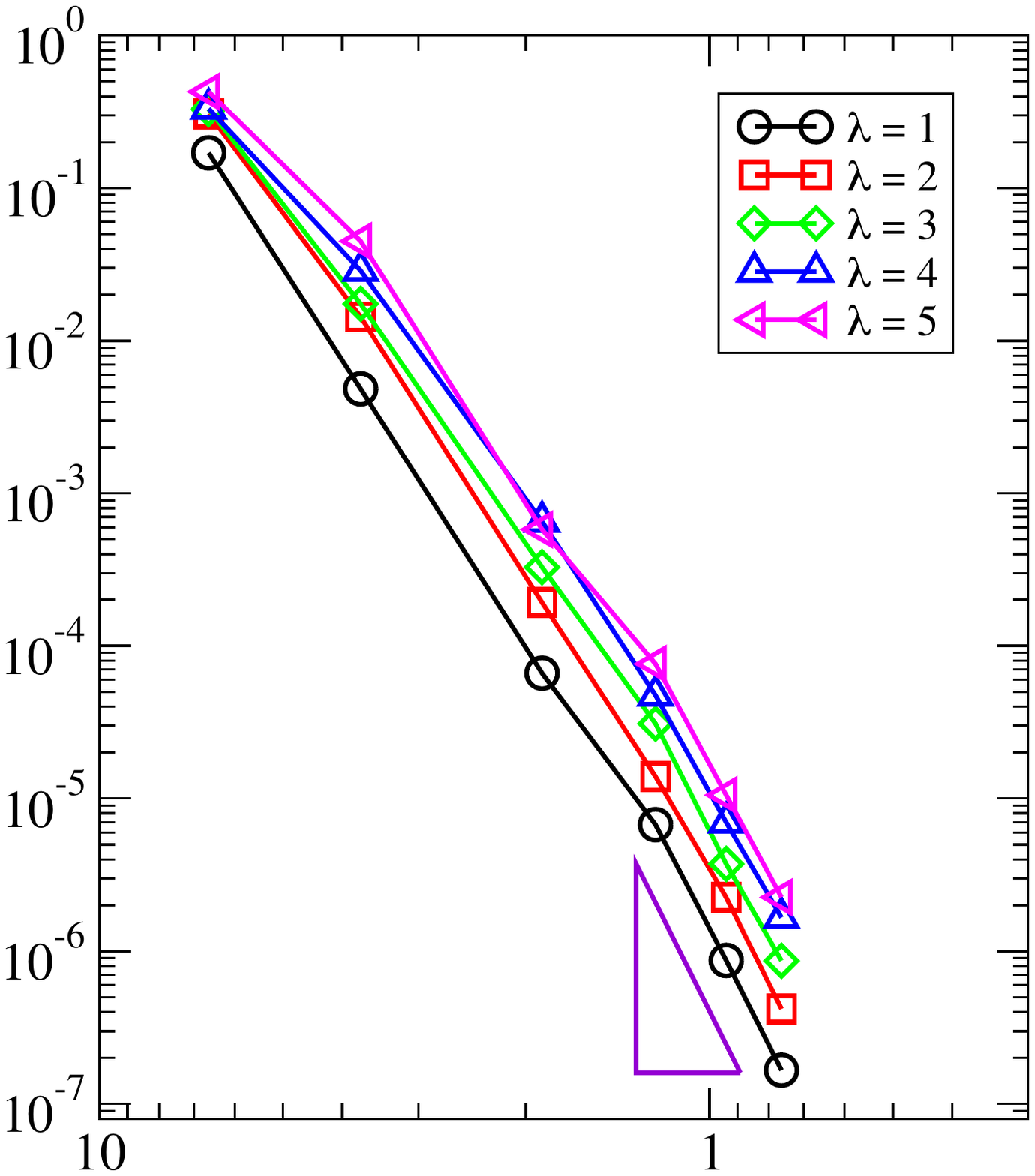}
      \put(-5,15){\begin{sideways}\textbf{Relative approximation error}\end{sideways}}
      \put(32,-5) {\textbf{Mesh size $\mathbf{h}$}}
      \put(49,18){\textbf{8}}
    \end{overpic}
    \\[0.5em]\textbf{$(k=3)$} & \textbf{$(k=4)$}
  \end{tabular}
  \caption{Test Case~1: Convergence plots for the approximation of the
    first five distinct eigenvalues $\lambda=1,2,3,4,5$ using the 
    randomized quadrilateral mesh and the virtual spaces $\VS{\hh}_{k}$, with $k=1$
    (top-leftmost panel); $k=2$ (top-rightmost panel); $k=3$
    (bottom-leftmost panel); $k=4$ (bottom-rightmost panel).
    The generalized eigenvalue problem uses the nonstabilized bilinear
    form $\bsh(\cdot,\cdot)$. }
  \label{fig:quads:rates}
\end{figure}

\begin{figure}
  \centering
  \begin{tabular}{cc}
    \begin{overpic}[scale=0.35]{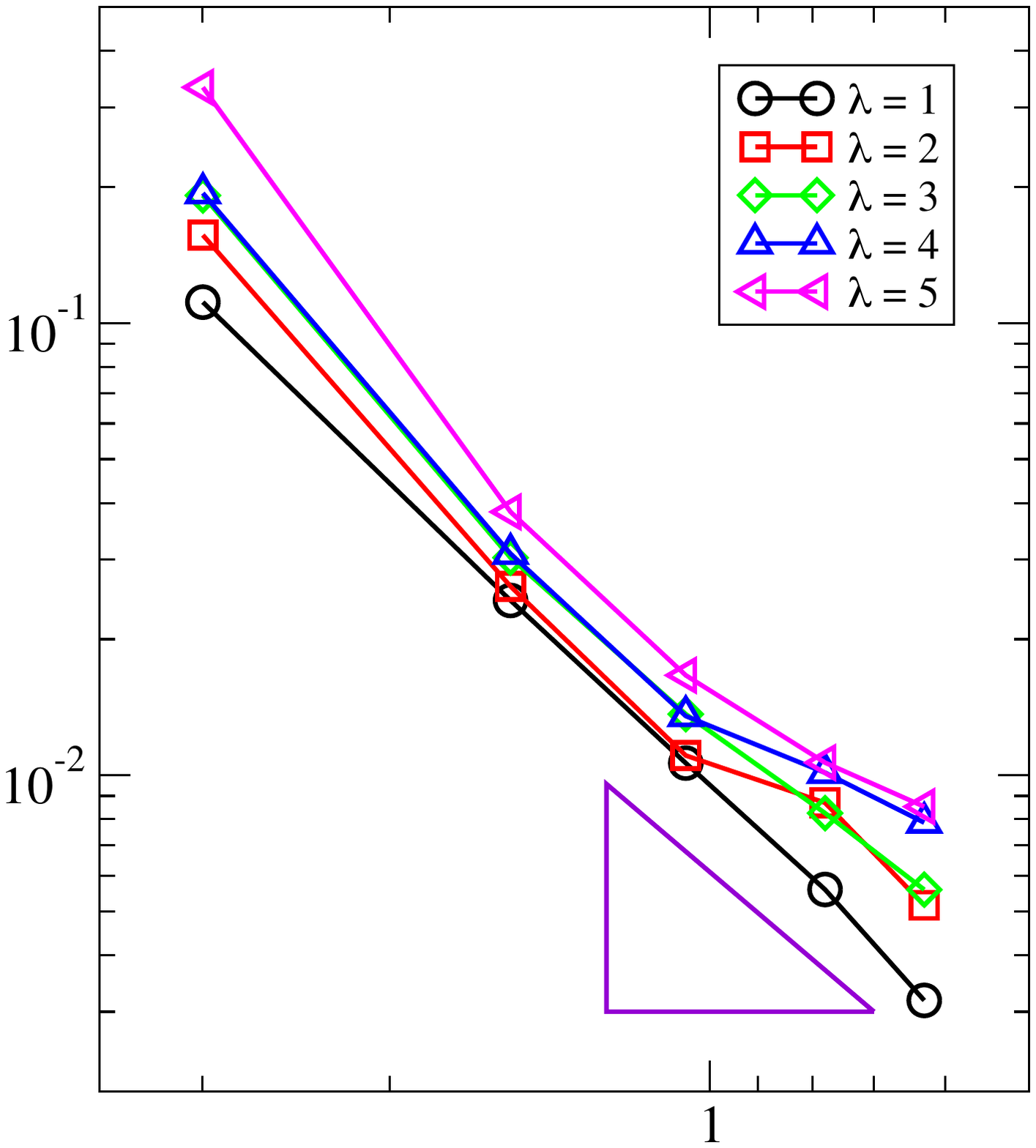}
      \put(-5,14){\begin{sideways}\textbf{Relative approximation error}\end{sideways}}
      \put(32,-5) {\textbf{Mesh size $\mathbf{h}$}}
      \put(47,21){\textbf{2}}
    \end{overpic} 
    &\qquad
    \begin{overpic}[scale=0.35]{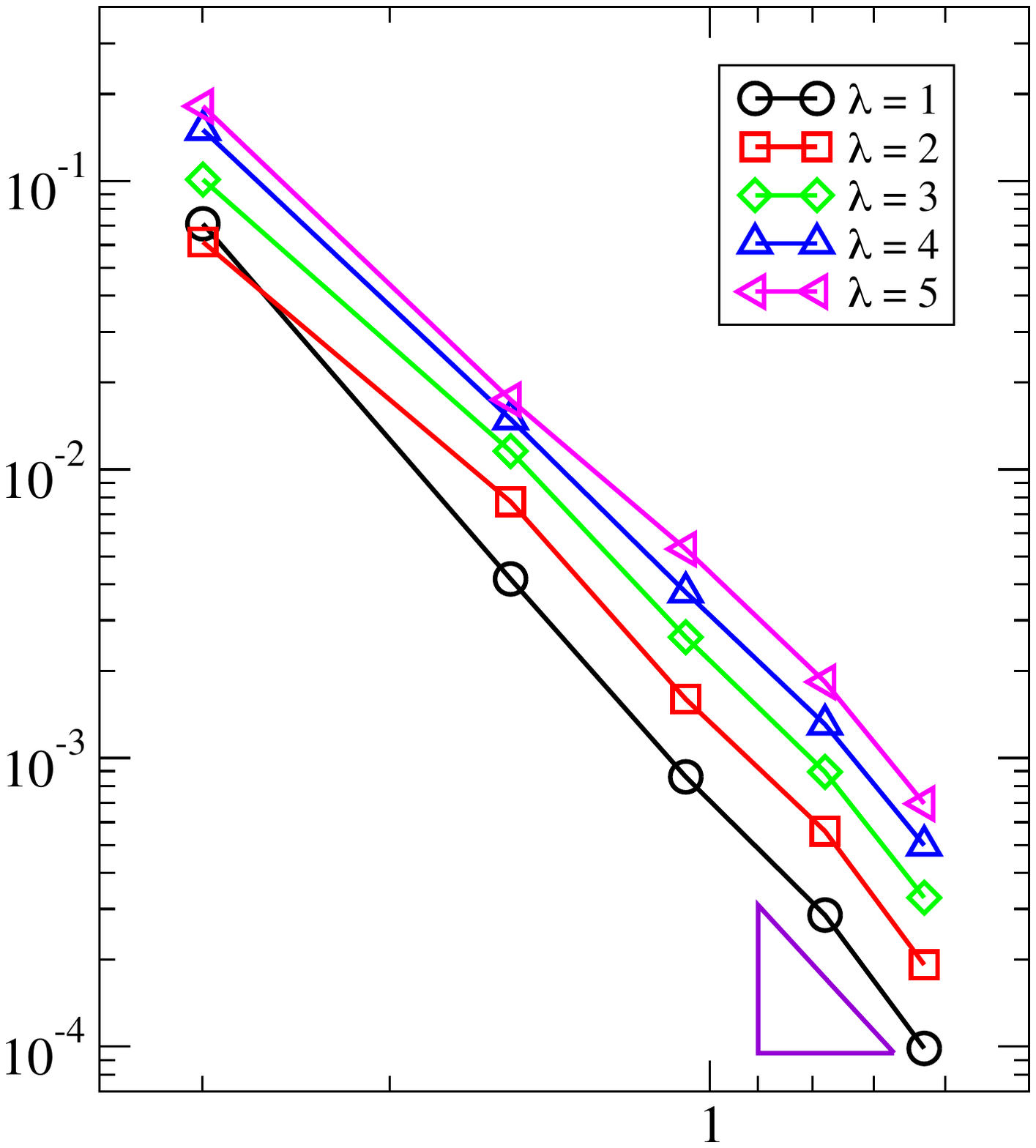}
      \put(-5,15){\begin{sideways}\textbf{Relative approximation error}\end{sideways}}
      \put(32,-5) {\textbf{Mesh size $\mathbf{h}$}}
      \put(59,15){\textbf{4}}
    \end{overpic}
    \\[0.5em] \textbf{$(k=1)$} & \textbf{$(k=2)$}\\[1.em]
    \begin{overpic}[scale=0.35]{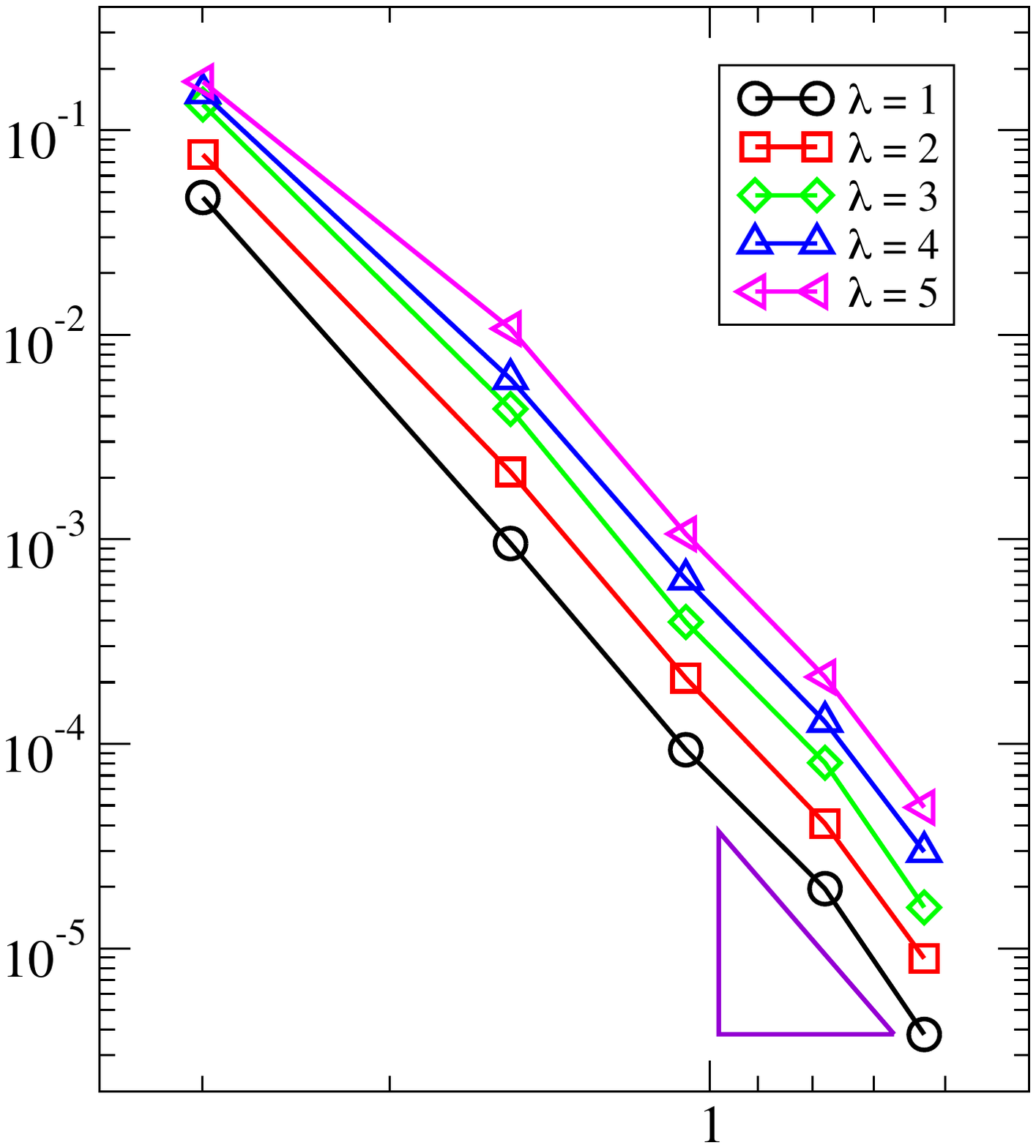}
      \put(-5,15){\begin{sideways}\textbf{Relative approximation error}\end{sideways}}
      \put(32,-5) {\textbf{Mesh size $\mathbf{h}$}}
      \put(56,18){\textbf{6}}
    \end{overpic}
    &\qquad
    \begin{overpic}[scale=0.35]{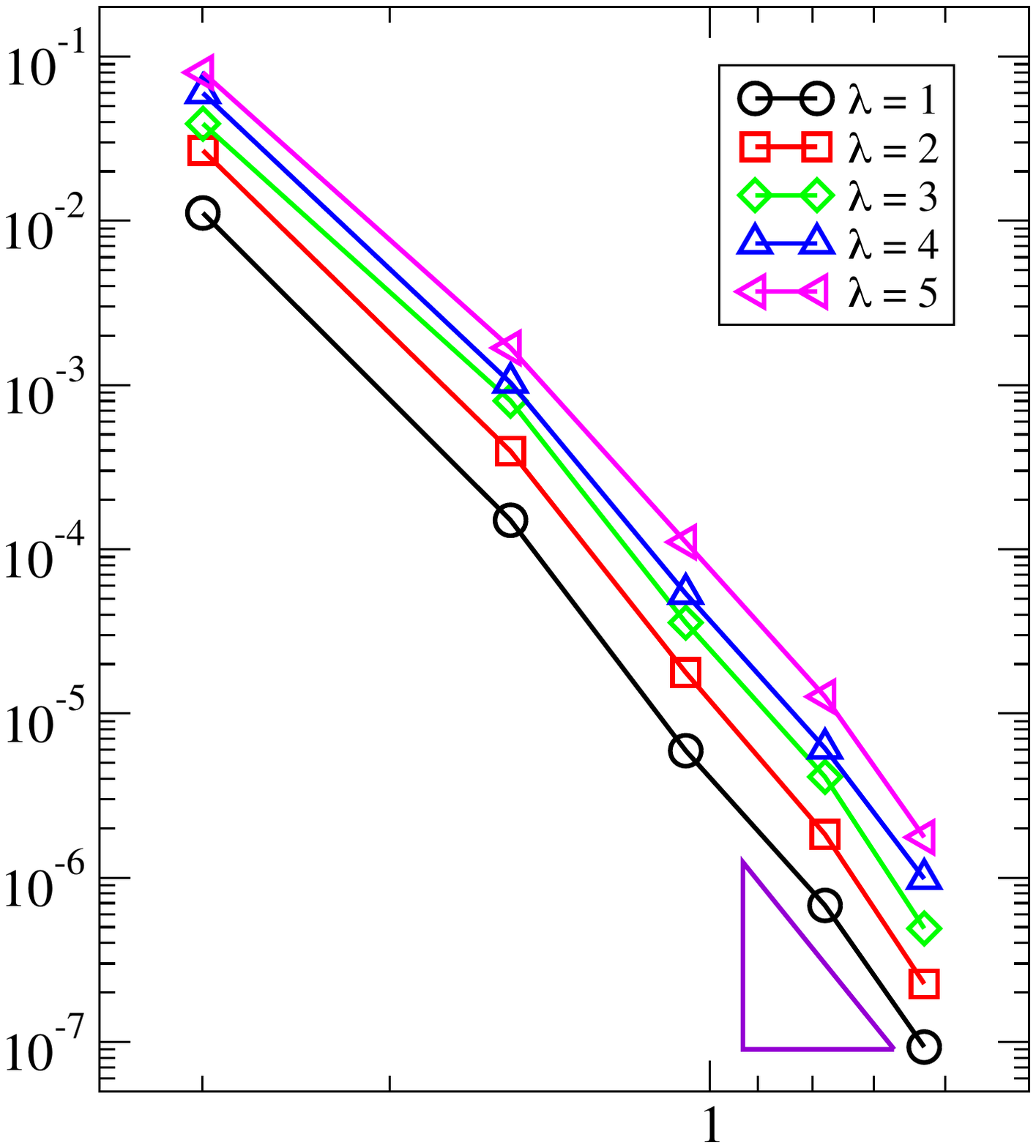}
      \put(-5,15){\begin{sideways}\textbf{Relative approximation error}\end{sideways}}
      \put(32,-5) {\textbf{Mesh size $\mathbf{h}$}}
      \put(58,17){\textbf{8}}
    \end{overpic}
    \\[0.5em] \textbf{$(k=3)$} & \textbf{$(k=4)$}
  \end{tabular}
  \caption{Test Case~1: Convergence plots for the approximation of the
    first five distinct eigenvalues $\lambda=1,2,3,4,5$ using the Voronoi mesh
    and the virtual spaces $\VS{\hh}_{k}$, with $k=1$
    (top-leftmost panel); $k=2$ (top-rightmost panel); $k=3$
    (bottom-leftmost panel); $k=4$ (bottom-rightmost panel).
    The generalized eigenvalue problem uses the nonstabilized bilinear
    form $\bsh(\cdot,\cdot)$. }
  \label{fig:voro:rates}
\end{figure}

The expected rate of convergence is shown in each panel by the
triangle closed to the error curve and indicated by an explicit label.
For these calculations, we used the VEM approximation based on the
conforming virtual element space $\Vhk{}$, $k=1,2,3,4$, and the VEM
formulation~\eqref{eq:discreteEigPbm} using the nonstabilized bilinear
form $\bsh(\cdot,\cdot)$.
As already observed in~\cite{Gardini-Vacca:2017} for the conforming
VEM approximation of the Laplace eigenvalue problem, the same
computations using formulation~\eqref{eq:discreteEigPbm2} and the
stabilized bilinear $\bsht(\cdot,\cdot)$ produce almost identical
results, which, for this reason, are not shown here.
These plots confirm that the conforming VEM formulations proposed in
this work provide a numerical approximation with optimal convergence
rate on a set of representative mesh sequences, including deformed and
nonconvex cells, of the Schrodinger equation problem, i.e., the
standard eigenvalue problem with a regular potential term in the
Hamilton operator at left hand-side.

\subsection{Test~2 (piecewise constant diffusivity tensor)}

The present  test problem is taken from the benchmark singular solution set in \cite{dauge}. 
We here consider the square domain $\Omega = {\color{black}(-1, 1)}^2$ split into two subdomains $\Omega_{\delta}$ and $\Omega_1$ (see the left plot in Figure \ref{split_domain}), and we study the eigenvalue problem on the square with  discontinuous diffusivity tensor and zero potential $V$ coupled with Neumann homogeneous boundary conditions i.e.
we consider the following problem in strong form:
\begin{equation*}
  -\nabla \cdot \left( \K(\xv) \nabla \us(\xv) \right)   = \eigv\us(\xv) \quad \text{in }\Omega\quad 
  \text{ and} \qquad \frac{\partial u}{\partial n} = 0 \quad \text{on $\Gamma$},
\end{equation*}
\begin{figure}
\centering
\begin{overpic}[scale= 0.35]{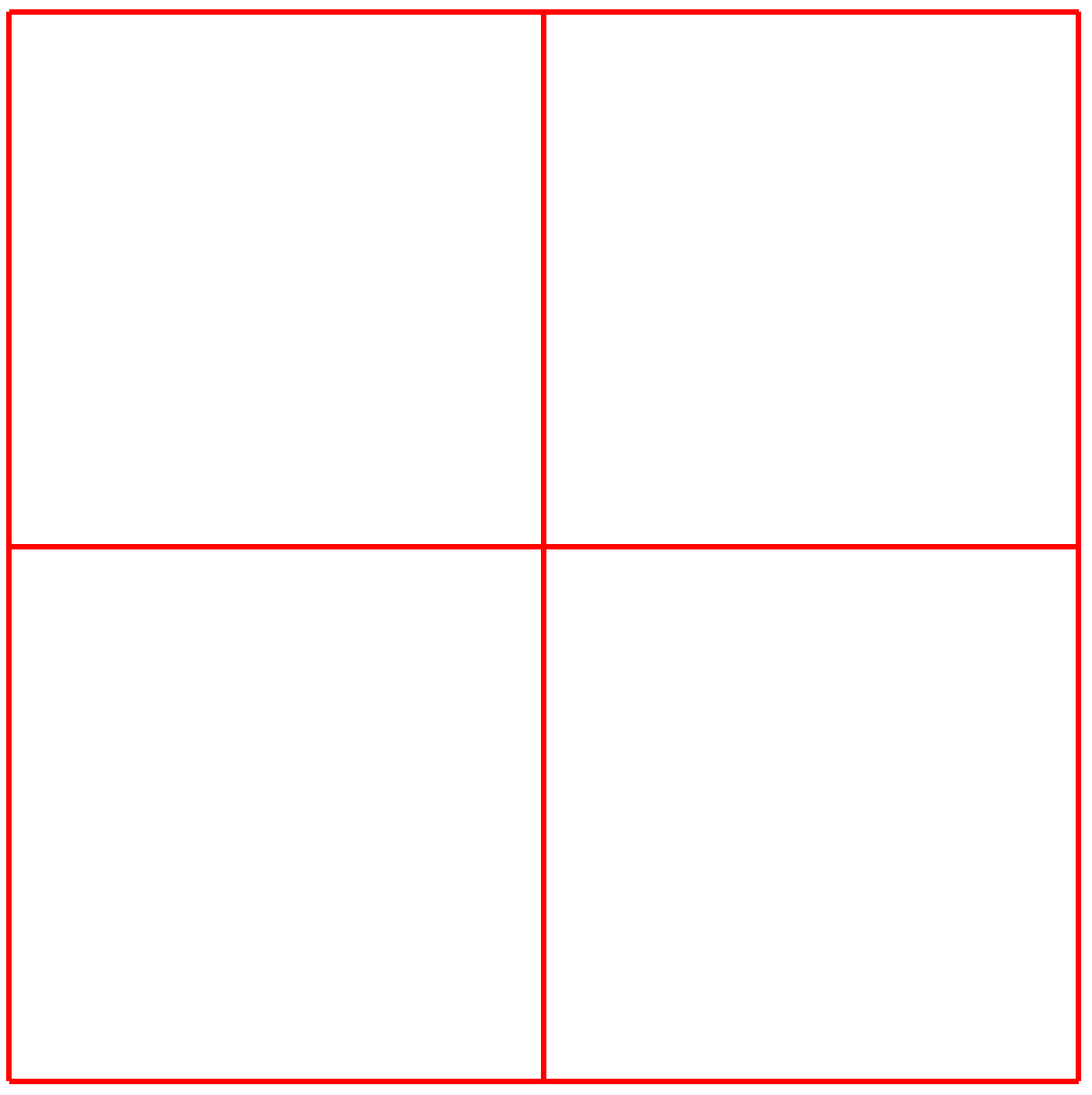}
\put (20,70) {\huge{$\Omega_1$}}
\put (70,20) {\huge{$\Omega_1$}}
\put (20,20) {\huge{$\Omega_{\delta}$}}
\put (70,70) {\huge{$\Omega_{\delta}$}}
\end{overpic}
\qquad
\qquad
\begin{overpic}[scale= 0.35]{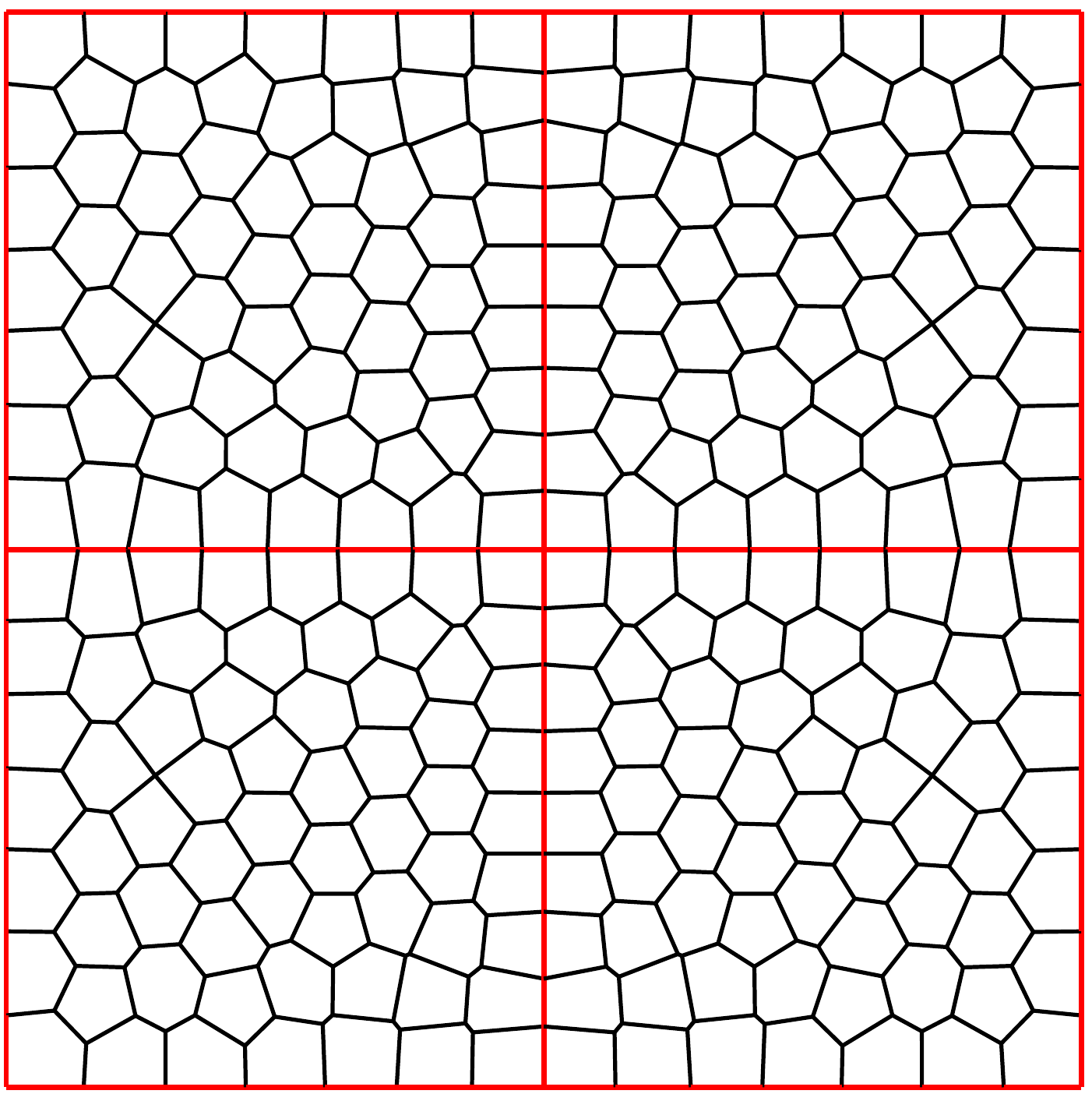}
\end{overpic}
\caption{Test {\color{black} Case 2:} Left plot: subdivision of $\Omega$ into the subdomains $\Omega_{\delta}$ and $\Omega_1$. Right plot: Example of locally Voronoi decomposition of $\Omega$.}
\label{split_domain}
\end{figure}
Therefore  the continuous bilinear form associated to the eigenvalue problem is
\[
a_{\K}^{\P}(u, v):= \int_{\P} \K \nabla u \cdot \nabla v \, {\rm d}\bf{x}
\]
whose virtual approximation (see \cite{vemgeneral, vemgeneral3d}) is given by
\begin{equation}\label{eq:ahgeneral}
a_{h, \K}^{\P}(u_h,v_h)=  \int_{\P} \K \Pi_{k-1}^{0, \P} \nabla u_h \cdot \Pi_{k-1}^{0, \P} \nabla v_h \, {\rm d}{\bf{x}}
+ \overline{\K} S^{\P}\Big((I-\Pi_k^{\nabla, \P})u_h,(I-\Pi_k^{\nabla}, \P)v_h\Big)
\end{equation}
to be used in place of $a_{h}^{\P}(u_h,v_h)$ (cf. \eqref{eq:discreteforms}) in Problem \eqref{eq:discreteEigPbm2}, where $\overline{\K} = \|\K\|_{\infty, \P}$.
We consider $\K_{|\Omega_1} = I$ and $\K_{|\Omega_{\delta}} = \delta^{-1} I$ with four different
values of $\delta$, namely $\delta = 0.50, 0.10, 0.01, 1e-8$.

We apply the Virtual Element method \eqref{eq:discreteEigPbm2} using a sequence of Voronoi meshes with mesh diameter $h=1/2$, $1/4$, $1/8$, $1/16$ (see the right plot in Figure \ref{split_domain} for an example of the adopted meshes). 
We show the plot of the convergence for the first eight computed eigenvalues in Figures \ref{fig:d1} , \ref{fig:d2}, \ref{fig:d3}, \ref{fig:d4}. We compute the relative errors by comparing our results with the values given in \cite{dauge}.

%

\begin{figure}
  \centering
  \begin{tabular}{ccc}
    \begin{overpic}[scale=0.32]{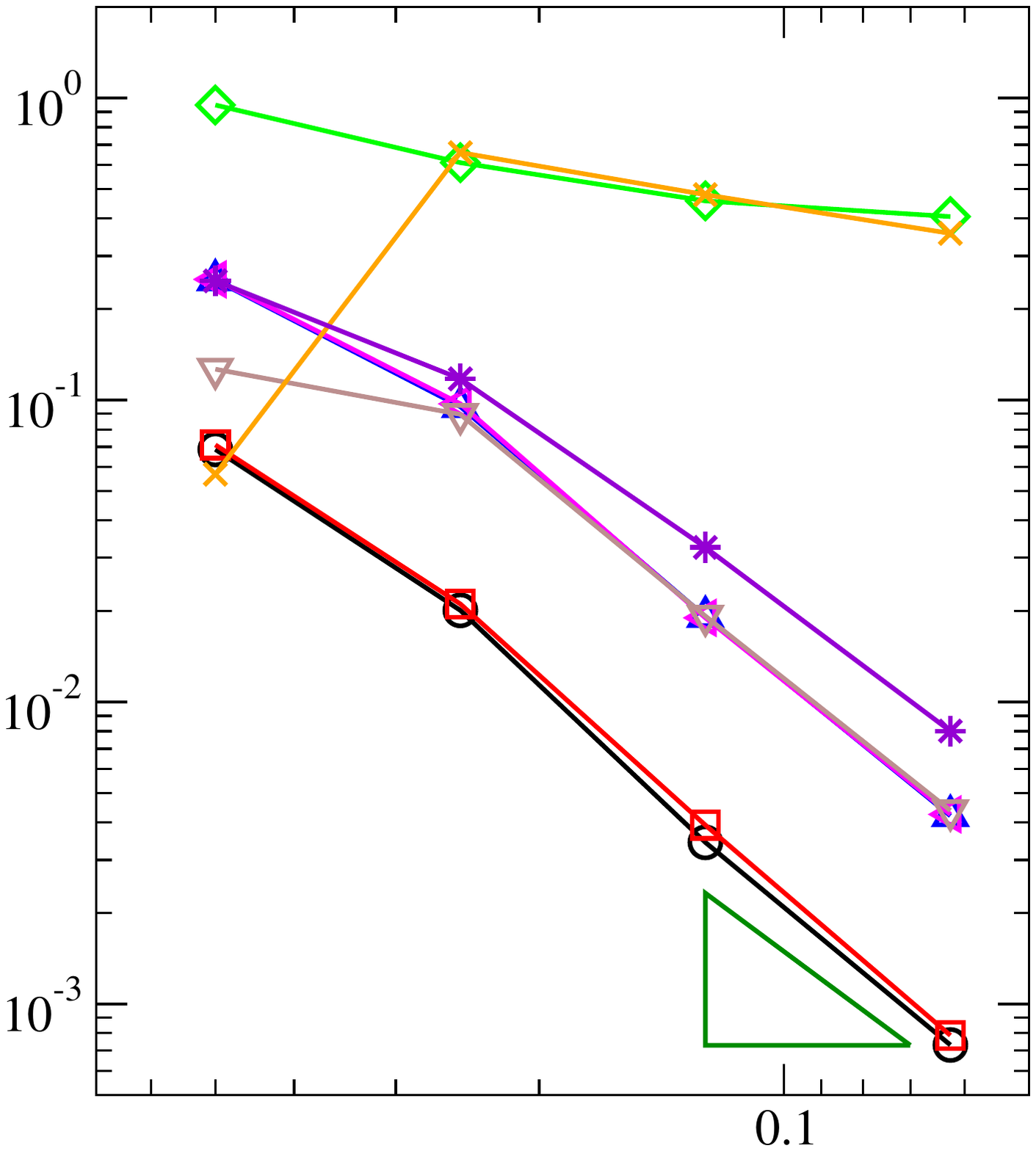}
      \put(-5,12){\begin{sideways}\textbf{Relative approximation error}\end{sideways}}
      \put(32,-5) {\textbf{Mesh size $\mathbf{h}$}}
      \put(53,17){\textbf{2}}
    \end{overpic}
    &
    \begin{overpic}[scale=0.32]{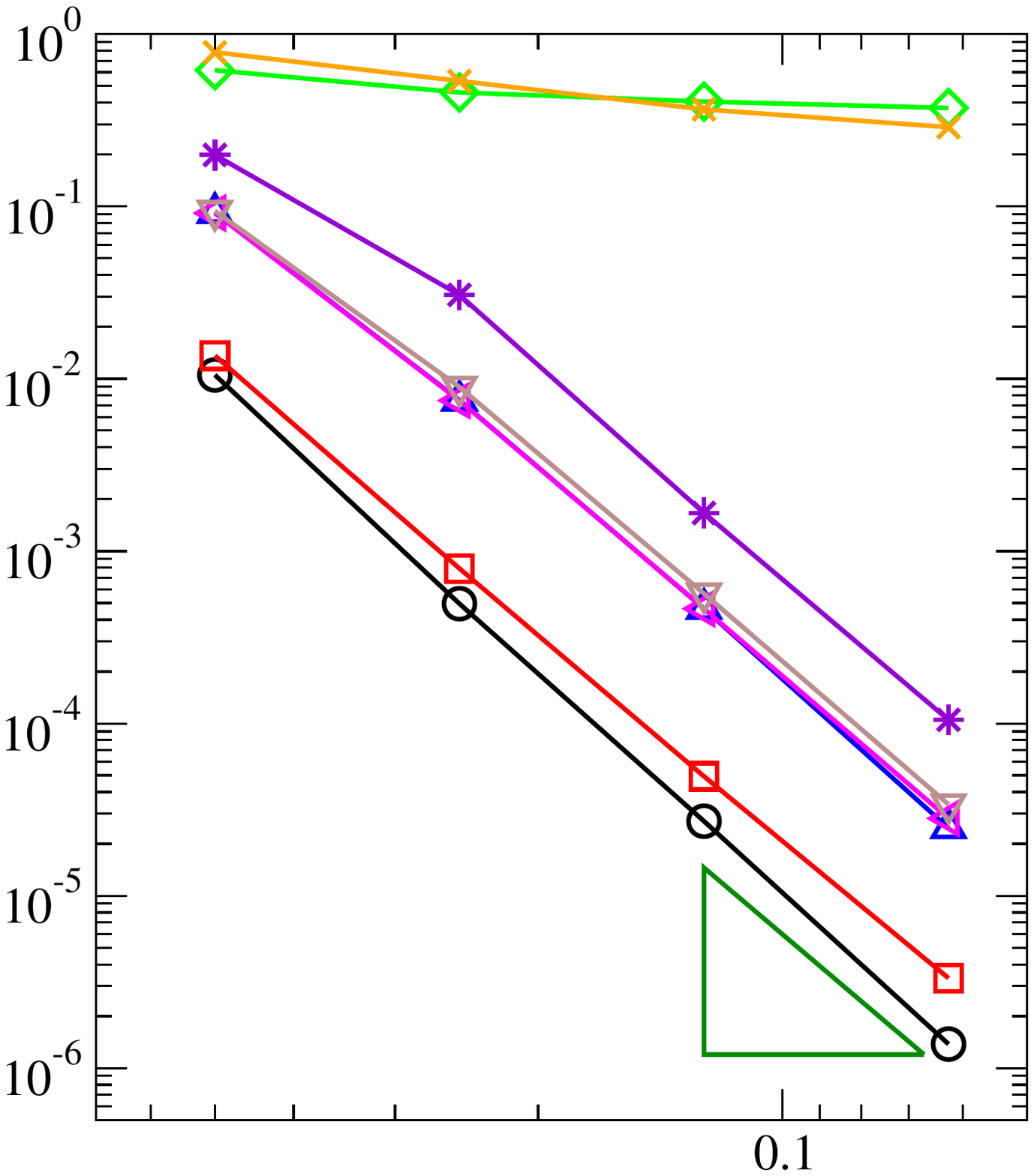}
      \put(32,-5) {\textbf{Mesh size $\mathbf{h}$}}
      \put(53,19){\textbf{4}}
    \end{overpic}
    &
    \begin{overpic}[scale=0.32]{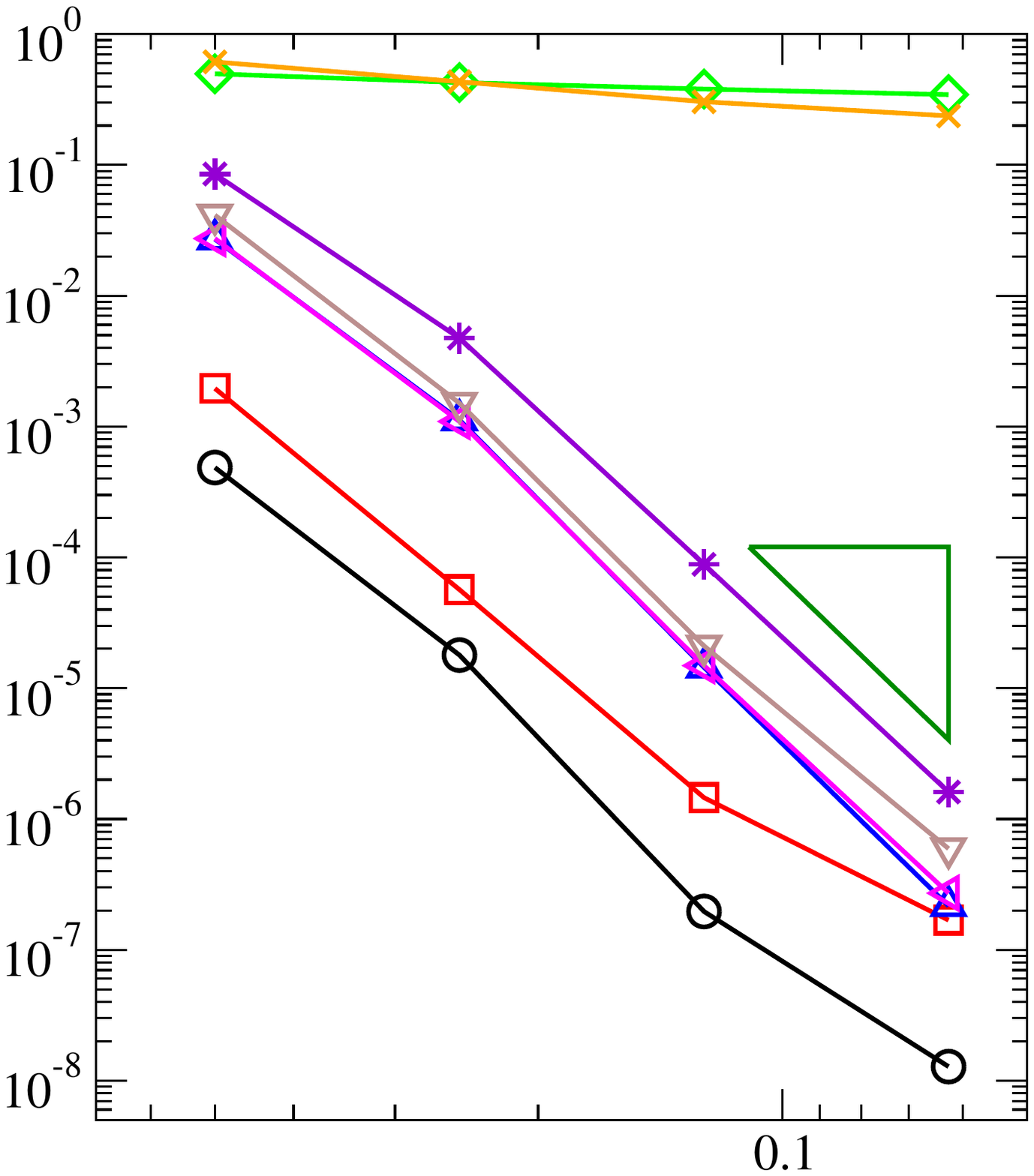}
      \put(32,-5) {\textbf{Mesh size $\mathbf{h}$}}
      \put(68,55){\textbf{6}}
    \end{overpic}
    \\[0.35em]
    $(k=1)$ & $(k=2)$ & $(k=3)$ \\
  \end{tabular}
  \caption{ Discontinuous diffusion problem, $\epsilon=0.01$; the symbols that labels the
            eigenvalues are in the following order: circle, square, diamond, triangle up,
            triangle left, triangle down, cross, star. 
            The generalized eigenvalue problem uses the stabilized bilinear
    form $\bsht(\cdot,\cdot)$. }
  \label{fig:d1}
\end{figure}

\begin{figure}
  \centering
  \begin{tabular}{ccc}
    \begin{overpic}[scale=0.32]{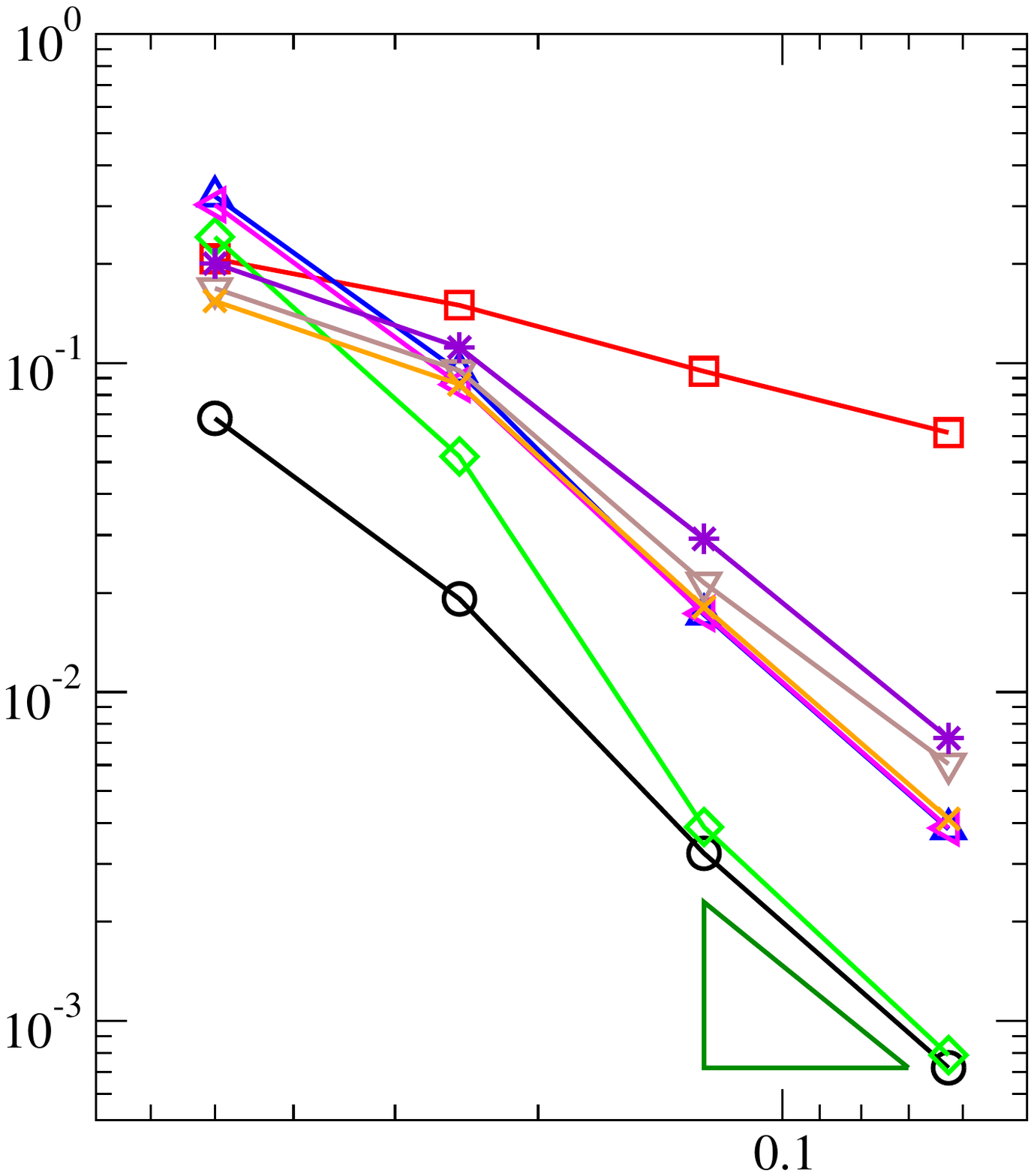}
      \put(-5,12){\begin{sideways}\textbf{Relative approximation error}\end{sideways}}
      \put(32,-5) {\textbf{Mesh size $\mathbf{h}$}}
      \put(53,17){\textbf{2}}
    \end{overpic}
    &
    \begin{overpic}[scale=0.32]{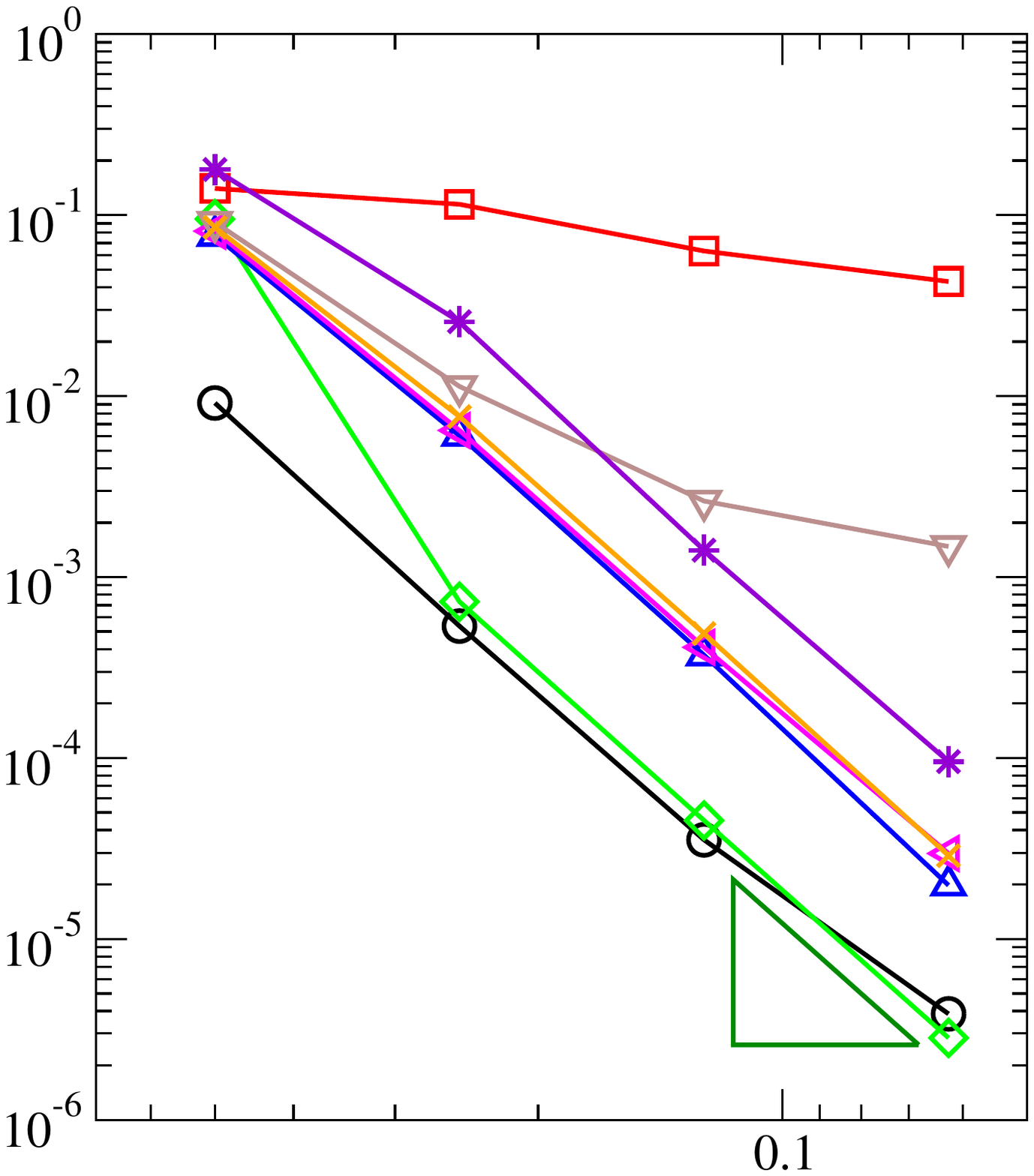}
      \put(32,-5) {\textbf{Mesh size $\mathbf{h}$}}
      \put(56,19){\textbf{4}}
    \end{overpic}
    &
    \begin{overpic}[scale=0.32]{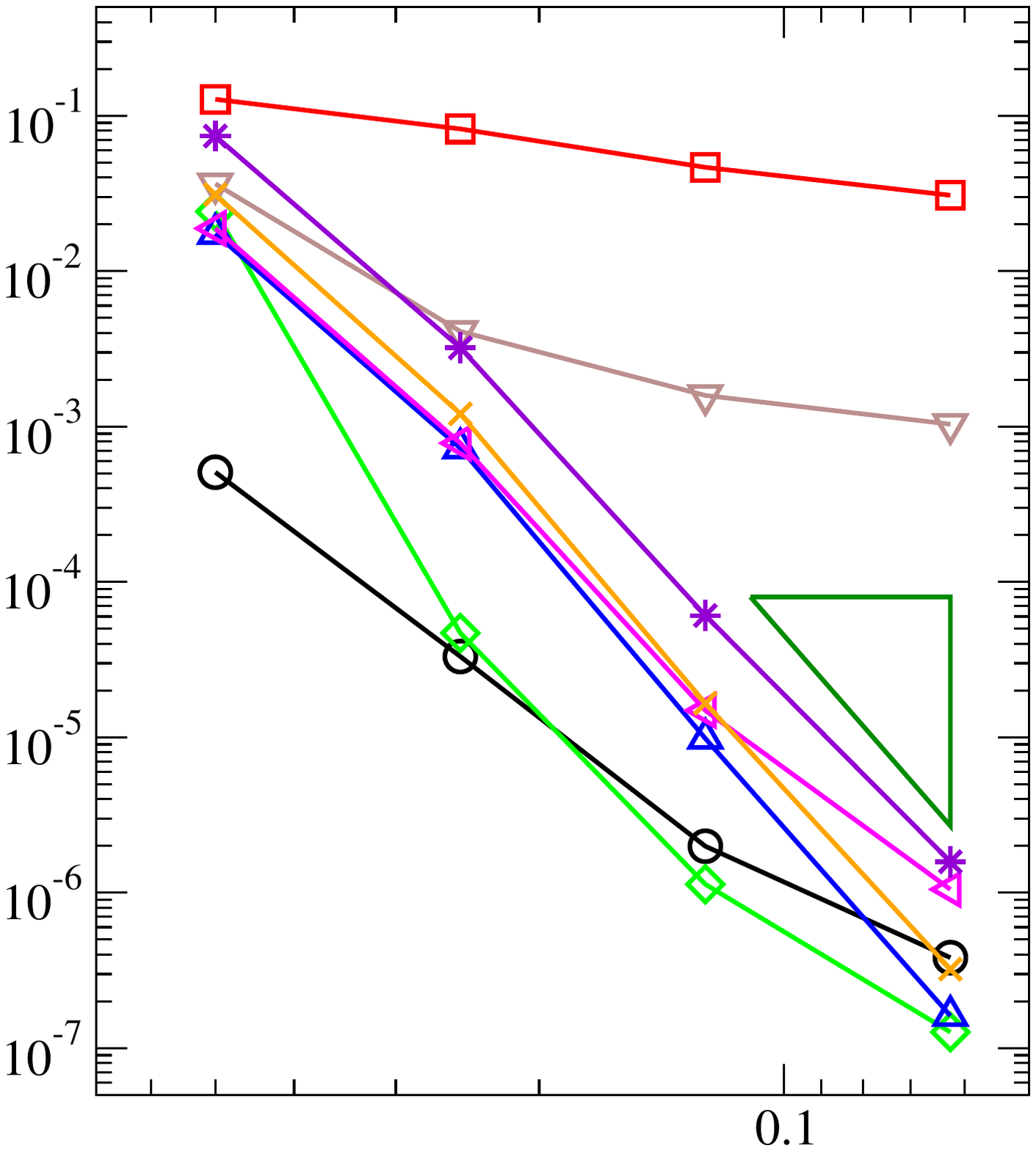}
      \put(32,-5) {\textbf{Mesh size $\mathbf{h}$}}
      \put(68,49){\textbf{6}}
    \end{overpic}
    \\[0.35em]
    $(k=1)$ & $(k=2)$ & $(k=3)$ \\
  \end{tabular}
  \caption{ Discontinuous diffusion problem, $\epsilon=0.10$; the symbols that labels the 
            eigenvalues are in the following order: circle, square, diamond, triangle up, 
            triangle left, triangle down, cross, star. 
            The generalized eigenvalue problem uses the stabilized bilinear
    form $\bsht(\cdot,\cdot)$.  }
  \label{fig:d2}
\end{figure}

\begin{figure}
  \centering
  \begin{tabular}{ccc}
    \begin{overpic}[scale=0.32]{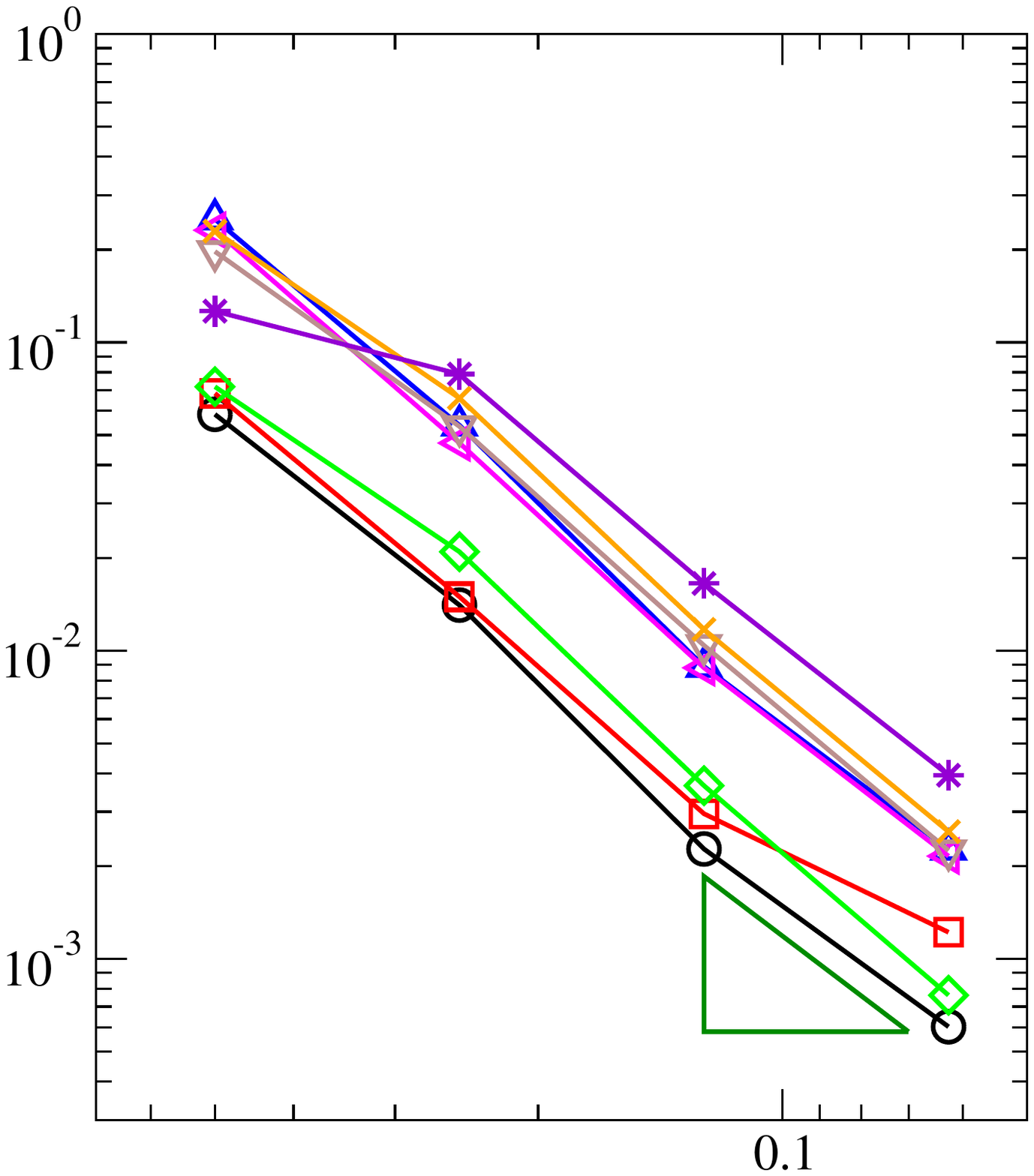}
      \put(-5,12){\begin{sideways}\textbf{Relative approximation error}\end{sideways}}
      \put(32,-5) {\textbf{Mesh size $\mathbf{h}$}}
      \put(53,19){\textbf{2}}
    \end{overpic}
    &
    \begin{overpic}[scale=0.32]{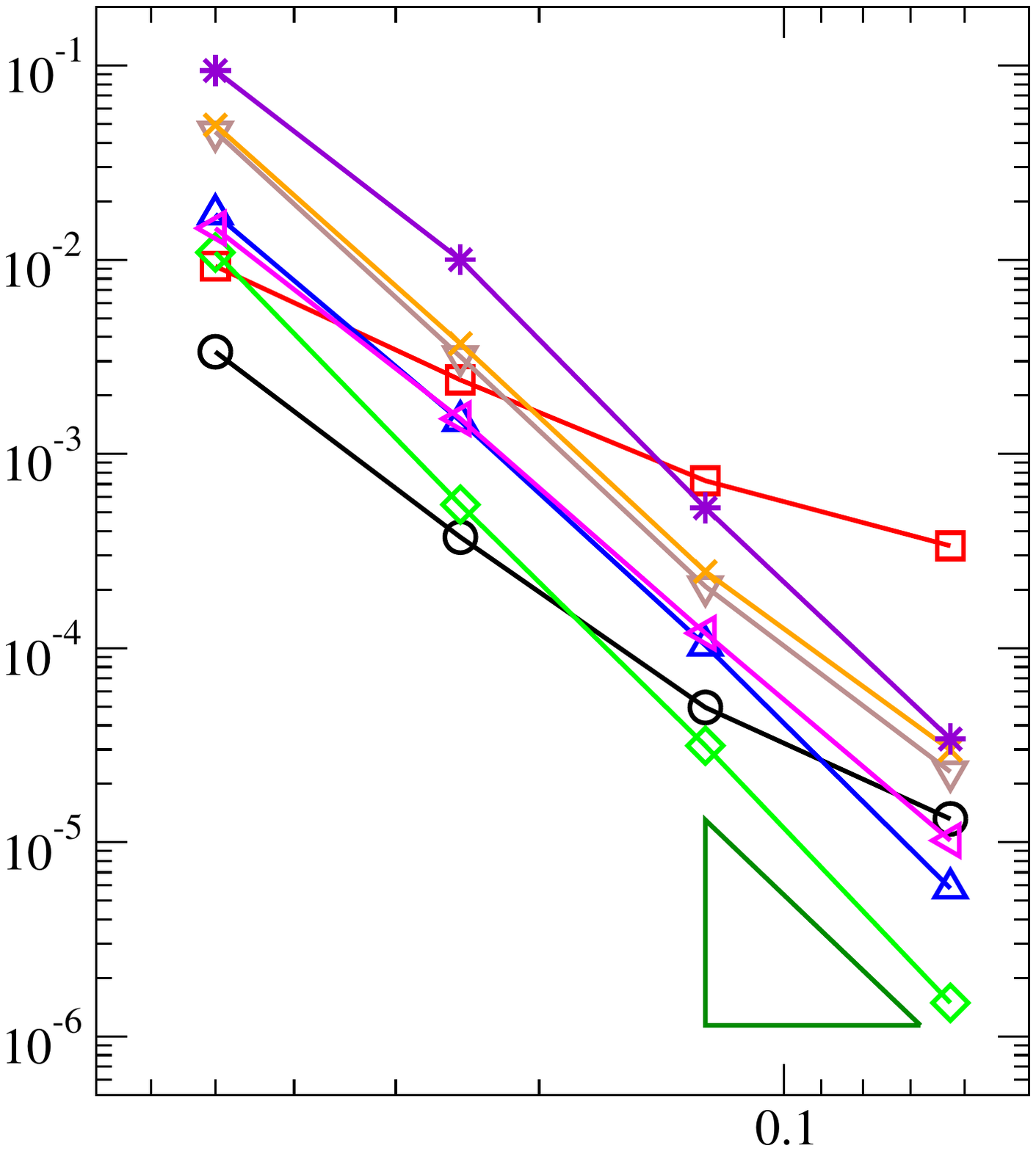}
      \put(32,-5) {\textbf{Mesh size $\mathbf{h}$}}
      \put(53,19){\textbf{4}}
    \end{overpic}
    &
    \begin{overpic}[scale=0.32]{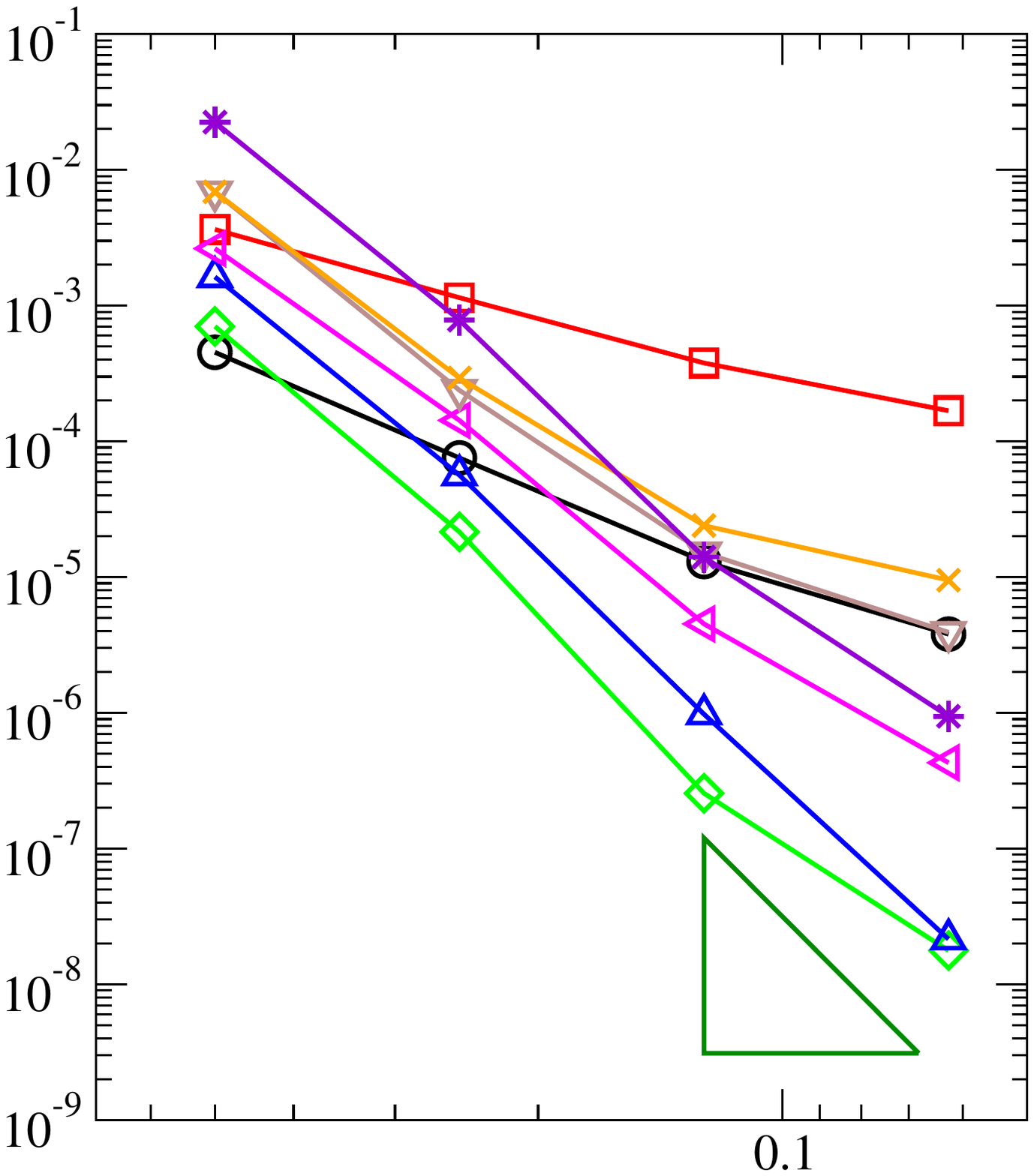}
      \put(32,-5) {\textbf{Mesh size $\mathbf{h}$}}
      \put(53,19){\textbf{6}}
    \end{overpic}
    \\[0.35em]
    $(k=1)$ & $(k=2)$ & $(k=3)$ \\
  \end{tabular}
  \caption{ Discontinuous diffusion problem, $\epsilon=0.50$; the symbols that labels the 
            eigenvalues are in the following order: circle, square, diamond, triangle up, 
            triangle left, triangle down, cross, star. 
            The generalized eigenvalue problem uses the stabilized bilinear
    form $\bsht(\cdot,\cdot)$.  }
  \label{fig:d3}
\end{figure}

\begin{figure}
  \centering
  \begin{tabular}{ccc}
    \begin{overpic}[scale=0.32]{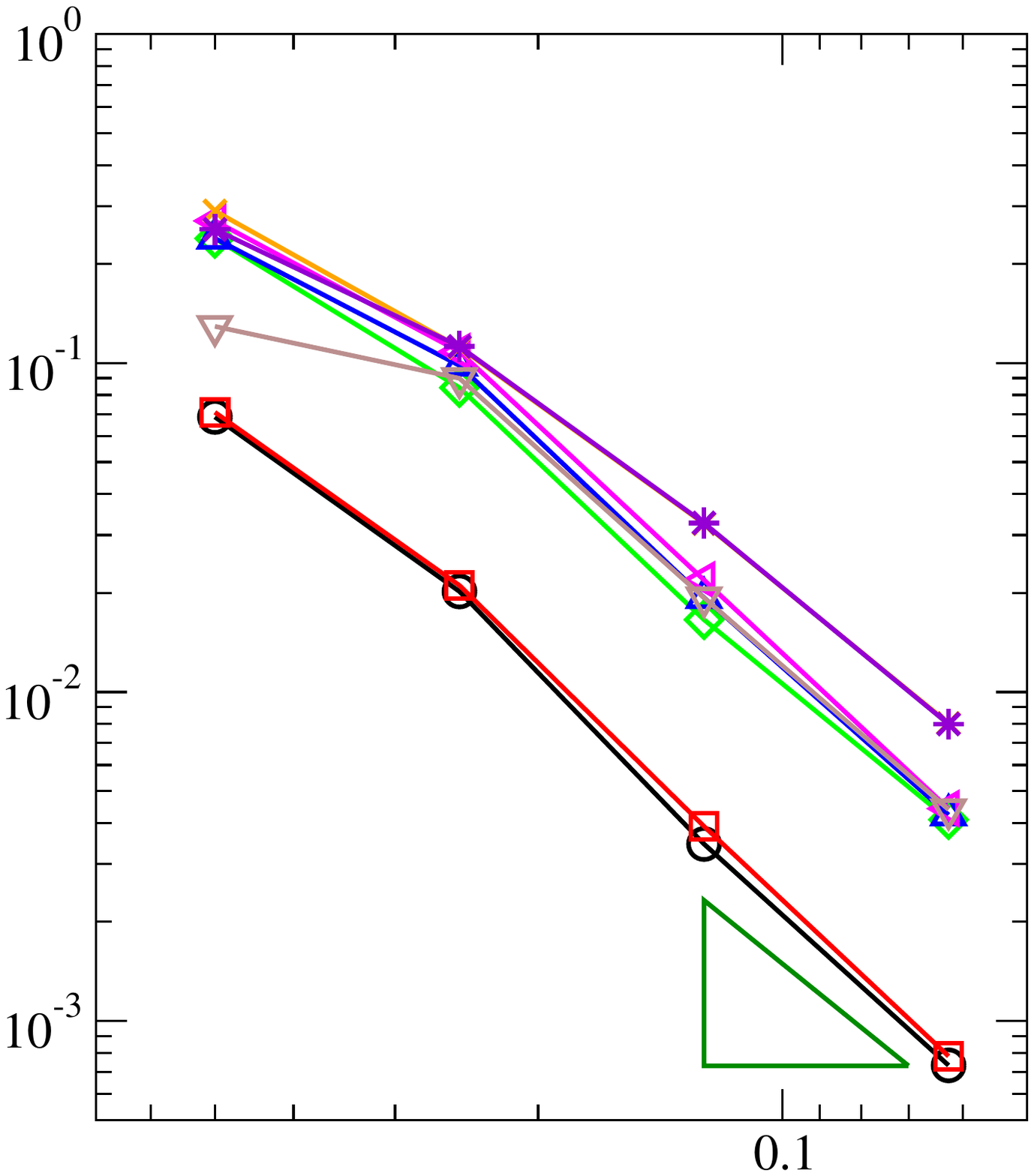}
      \put(-5,12){\begin{sideways}\textbf{Relative approximation error}\end{sideways}}
      \put(32,-5) {\textbf{Mesh size $\mathbf{h}$}}
      \put(53,17){\textbf{2}}
    \end{overpic}
    &
    \begin{overpic}[scale=0.32]{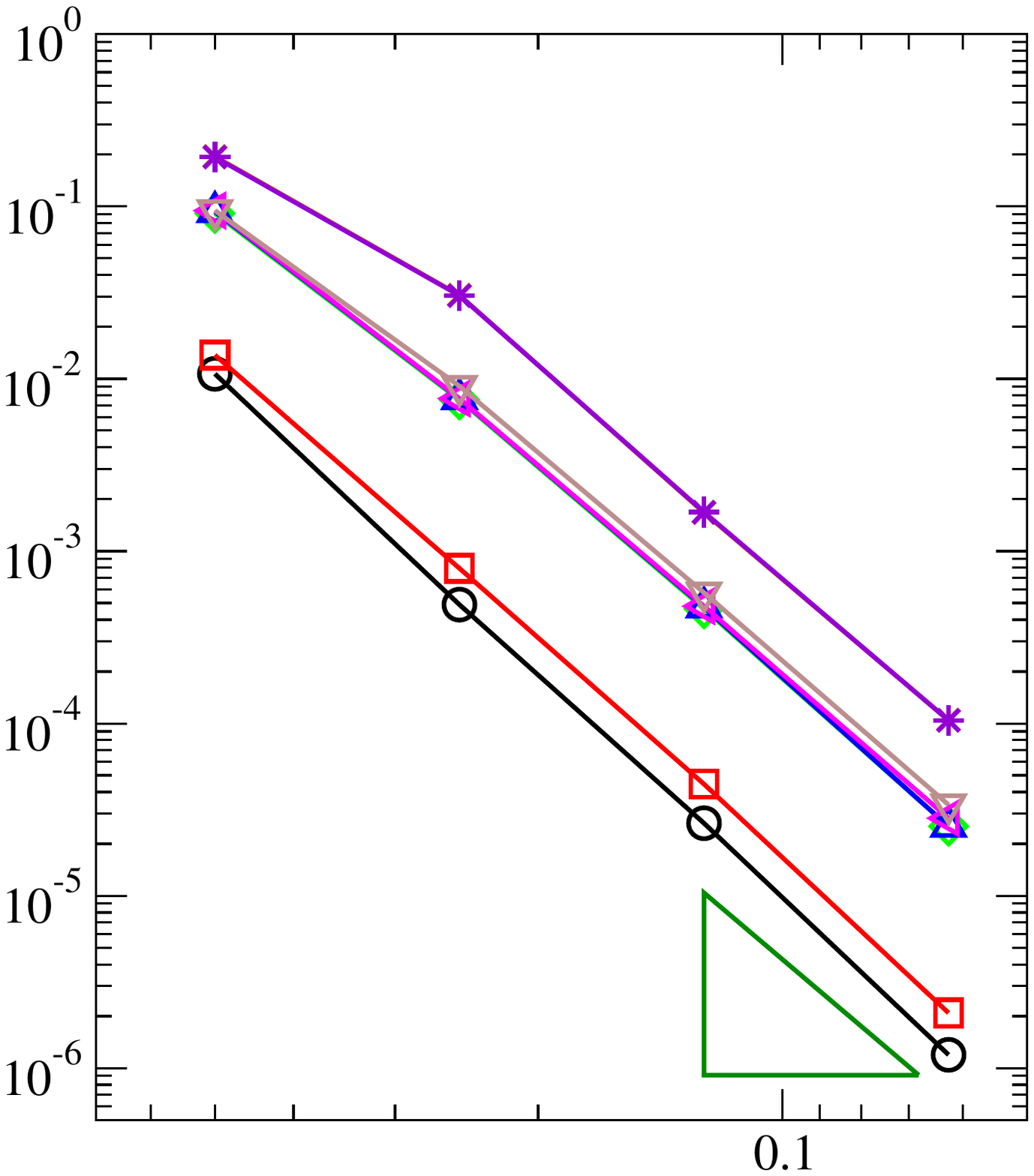}
      \put(32,-5) {\textbf{Mesh size $\mathbf{h}$}}
      \put(53,17){\textbf{4}}
    \end{overpic}
    &
    \begin{overpic}[scale=0.32]{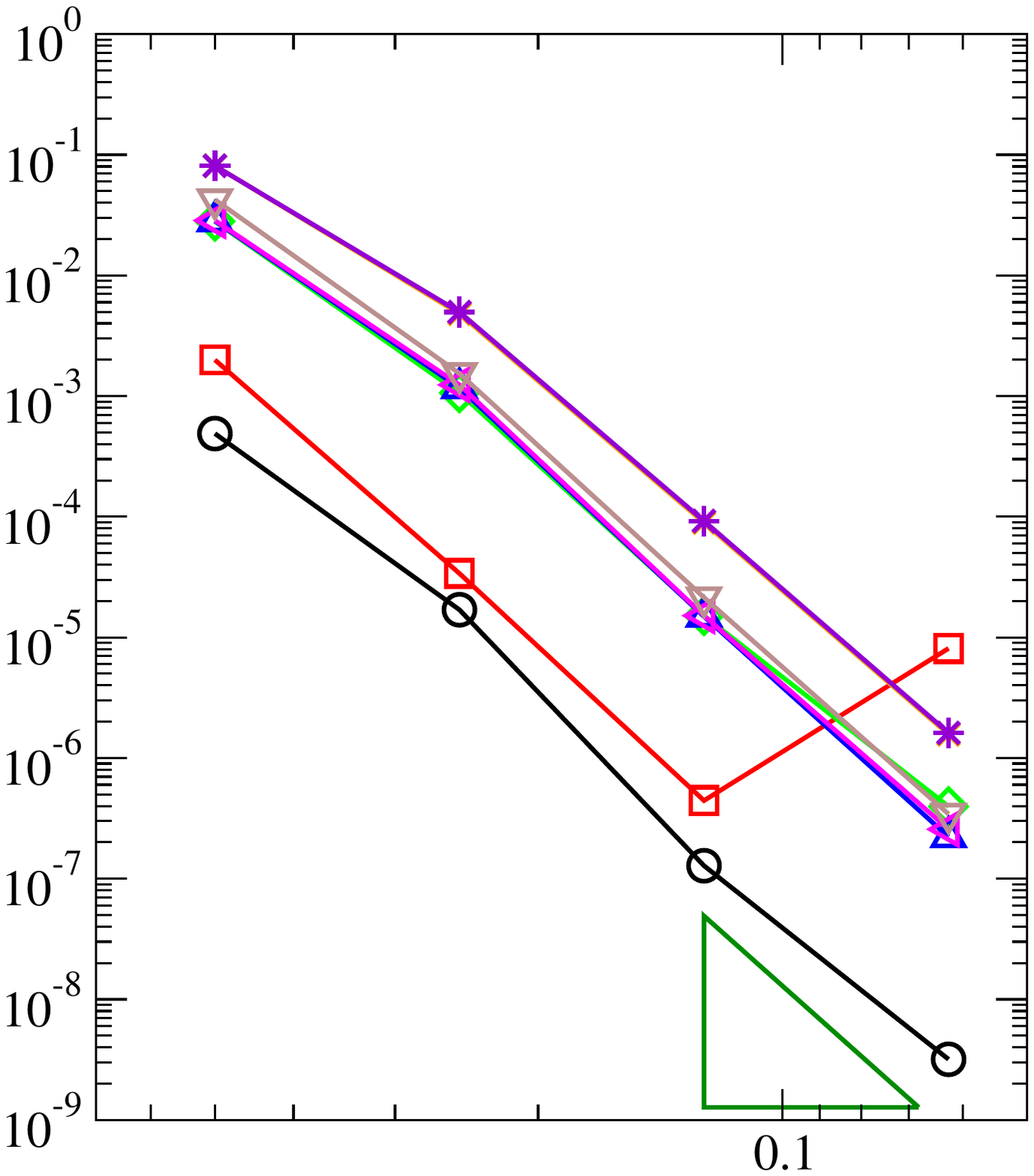}
      \put(32,-5) {\textbf{Mesh size $\mathbf{h}$}}
      \put(53,16){\textbf{6}}
    \end{overpic}
    \\[0.35em]
    $(k=1)$ & $(k=2)$ & $(k=3)$ \\
  \end{tabular}
  \caption{ Discontinuous diffusion problem, $\epsilon=10^{-8}$; the symbols that labels the 
            eigenvalues are in the following order: circle, square, diamond, triangle up, 
            triangle left, triangle down, cross, star. 
            The generalized eigenvalue problem uses the stabilized bilinear
    form $\bsht(\cdot,\cdot)$. }
  \label{fig:d4}
\end{figure}

In accordance with Theorem \ref{theorem:double:convergence:rate}, we obtain different rates of convergence that are determined by the polynomial order of the method and by the regularity of the corresponding exact eigenfunctions \cite{dauge}. Taking this into account, we show that the method is overall optimal, and thus stable with respect to discontinuities in the diffusivity tensor.

\section{Conclusions}
\label{sec:conclusion}

We have discussed the application of the conforming virtual element
method to the numerical resolution of eigenvalue problems with
potential terms on polytopal meshes.
The most notable case is that of the Schr\"odinger equation with a
suitable pseudopotential, which is fundamental in the numerical
treatment of more complex problems in the Density Functional Theory.
The VEM approximation of such problem was discussed from both the
theoretical and the numerical viewpoint, proving that the 
method provides a correct spectral 
approximation with optimal rates of convergence.
The performance of the method was shown by computing the first
eigenvalues of the Quantum Harmonic Oscillator provided by the
harmonic potential and a singular eigenvalue problem with zero
potential.

\section*{Acknowledgements}
The work of the first and third author was supported by the Laboratory
Directed Research and Development Program (LDRD), U.S. Department of
Energy Office of Science, Office of Fusion Energy Sciences, and the
DOE Office of Science Advanced Scientific Computing Research (ASCR)
Program in Applied Mathematics Research, under the auspices of the
National Nuclear Security Administration of the U.S. Department of
Energy by Los Alamos National Laboratory, operated by Los Alamos
National Security LLC under contract DE-AC52-06NA25396.
The fourth author was partially supported by the European Research
Council through the H2020 Consolidator Grant (grant no. 681162) CAVE -
Challenges and Advancements in Virtual Elements. This support is
gratefully acknowledged.

\bibliographystyle{abbrv}
\bibliography{vem-eigv}

\end{document}